\newtheorem{prop}{Proposition}[section]
\newtheorem{thm}{Theorem}[section]
\newtheorem{lemma}{Lemma}[section]
\newtheorem{claim}{Claim}
\theoremstyle{remark}
\newtheorem{remark}{Remark}[section]
\newcommand{\overbar}[1]{\mkern 1.7mu\overline{\mkern-1.7mu#1\mkern-1.7mu}\mkern 1.7mu}
\def\R{\mathbb R}
\def\E{\mathbb E}
\def\Z{\mathbb Z}
\def\o{\omega}
\def\wtilde{\widetilde}
\def\cm{\operatorname{cum}}
\begin{document}

\title{A simple test for white noise in functional time series}
\author{Pramita Bagchi, Vaidotas Characiejus,  Holger Dette \\
Ruhr-Universit\"at Bochum \\
Fakult\"at f\"ur Mathematik \\
44780 Bochum \\
Germany}

\maketitle
\begin{abstract}
We propose a new procedure for white noise testing of a functional time series. Our approach is based on an explicit representation  of the  $L^2$-distance between the spectral density operator and its best  ($L^2$-)approximation by a spectral density operator corresponding to a white noise process. The estimation of this distance can be easily accomplished by sums of periodogram kernels  and it is shown that an appropriately standardized version of the estimator is asymptotically normal distributed under the null hypothesis (of functional white noise) and under the alternative. As a consequence we obtain a very simple test (using the quantiles of the normal distribution) for the hypothesis of a white noise functional process. In particular the test does neither require the estimation of a long run variance (including a fourth order cumulant) nor resampling procedures to calculate critical values. Moreover, in contrast to all other methods proposed in the literature our approach also allows to test for ``relevant'' deviations from white noise and to construct confidence intervals for a measure which measures the discrepancy of the underlying process from a functional white noise process.
\end{abstract}

\noindent
Keywords: time series, functional data, white noise, minimum distance \\
AMS Subject classification: 62M10

\section{Introduction}\label{sec1} 
\def\theequation{1.\arabic{equation}}
\setcounter{equation}{0}
The problem of testing for white noise in dependent data is of particular importance because these tests are frequently used to check the adequacy of a postulated parametric model. The seminal work on this problem can be found in the papers of \cite{boxpie1970,pierce1972,ljubox1978} who proposed  \emph{portmanteau tests} to check the goodness of fit  of an ARMA model. They operate in the  time domain and are based on a sum of squared correlations with fixed lag truncation number [see also \cite{mokkadem1997,dettspre2000} for some more recent references]. The asymptotic properties of the different test statistics considered in these papers are usually derived under the assumption of independent and identically distributed (i.i.d.) innovations and several authors point out that these tests are are not reliable if the innovations are uncorrelated but not independent [see \cite{romtho1996} and \cite{fraroyzak2005} among others].

An alternative to the tests operating in the time domain are frequency domain tests, which are based on a comparison between the spectral density corresponding to the process of the innovations and the spectral density of a white noise. For example, \cite{hong1996} proposed to use an $L^2$-distance between a kernel-based spectral density estimator and the spectral density of the noise under the null hypothesis to construct a test statistic and this approach has been more recently further developed by  \cite{shao2011}. We also refer to \cite{dettspre2003,paparoditis2000} for some  results testing more general hypotheses by investigating distances between a parametric and non-parametric spectral density estimate. Other authors propose to use  normalized cumulated deviations between a non-parametrically  and a parametrically estimated spectral density [see for example \cite{deo2000}]. All  methods mentioned in this and the previous paragraph require the specification of a regularization parameter (lag number or bandwidth). \cite{Dette_Kinsvater_Vetter_2011} proposed to estimate the $L^2$-distance between the unknown  density directly  using sums of (squared) periodograms. The corresponding test statistic does not require regularization and under the null hypothesis and the additional assumption of a linear moving averaging process with Gaussian innovations its asymptotic distribution  is a centered  normal distribution with an easily estimable variance. As a consequence a very simple test for white noise can be proposed with attractive finite sample properties. 

Due to the increasing demand in analyzing data providing information about curves, surfaces or anything else varying over a continuum many of these methods have been recently further developed to be applicable for functional data.  For a general review on Functional  data analysis (FDA) with dependent observations we refer the interested readers to the monograph by \cite{horvkoko2012}. A test for the hypotheses of white  noise 
of a sequence of functional observations in the time domain has been  proposed  by \cite{gabrkoko2007}. They combine principle components for functional data analysis with a ``classical'' portmanteau test. More recently, \cite{horhusric2013}  considered an alternative  portmanteau test which is  based on the sum of the $L^2$-norms of the empirical covariance kernels. As the validity of these tests is only justified under the i.i.d.\ assumption of the innovation process (and therefore not robust to  dependent white noise), \cite{zhang2016} proposed a spectral domain test using a cumulative distance between the periodogram function and its integral with respect to the frequency [for an early result in the one-dimensional case we also refer to \cite{dahlhaus1988}]. This author proved weak convergence of an appropriately standardized version of this process and derived a Cramer von Mises type statistic with non-pivotal limiting null distribution. To solve this problem a bootstrap procedure is introduced to generate critical values. 

The present paper is devoted to an alternative test in the spectral domain for the hypothesis of white noise functional data. Our approach is based on a direct estimate  of the $L^2$-distance between (unknown) spectral density operator and its best approximation by an operator corresponding to functional white noise process. This distance can be estimated directly by sums of periodogram kernels (thus we do not estimate the spectral density kernel, but just real valued functionals of it). We show that  that the corresponding test statistic is asymptotically normal distributed such that critical values can easily be obtained. The main advantage of our approach is its simplicity as it neither requires regularization nor bootstrap in its implementation. In particular the latter fact makes it  computationally very efficient for functional data. Moreover, we also demonstrate by means of a simulation study that the new test is very competitive to an alternative procedure which has recently been proposed in the literature [see \cite{zhang2016}].

The corresponding model is introduced in \autoref{sec2}. \autoref{sec3} is devoted to the new distance, its estimate and the corresponding asymptotic theory. We also note that our approach (as it is based on a distance) provides a measure of deviation from a functional white noise for which we provide an explicit (and simple) confidence interval. Other statistical applications are also discussed in this section. In \autoref{sec4} we investigate the finite sample properties of the new test and compare it with the  alternative test proposed by \cite{zhang2016}. Finally, the proofs of the main results are given in \autoref{sec5} and \autoref{sec6}.

\section{Notations and preliminaries} \label{sec2} 
\def\theequation{2.\arabic{equation}}
\setcounter{equation}{0}
Let $L^p([0,1]^k,\mathbb C)$ with $p\ge1$ and $k\ge1$ denote the Banach space of measurable functions $f:[0,1]^k\to\mathbb C$ whose absolute value raised to the $p$-th power has finite integral. The norm of $L^p([0,1]^k,\mathbb C)$ is defined by
\[
	\|f\|_p\equiv\bigg(\int_{[0,1]^k}|f(x)|^p\;dx\biggr)^{1/p}<\infty.
\]
Note that the equality of the $L^p([0,1]^k,\mathbb C)$ elements is understood in the sense of the norm of their difference being zero. The real and the imaginary parts of the complex number $x$ are denoted by $\operatorname{Re}x$ and $\operatorname{Im}x$ respectively. $\overline x$ denotes the complex conjugate of $x\in\mathbb C$ and $i$ is the imaginary unit, i.e. $i=\sqrt{-1}$. $L^2([0,1]^k,\mathbb C)$ is also a Hilbert space with the inner product given by 
\[
	\langle f,g\rangle\equiv\int_{[0,1]^k}f(x)\overline{g(x)}dx
\]
for $f,g\in L^2([0,1]^k,\mathbb C)$. $L^p([0,1]^k,\mathbb R)$ denotes the corresponding space of real-valued functions.

Suppose that $\{X_t\}_{t \in \Z}$ is a functional time series such that $X_t$ is a random element of $L^2([0,1],\R)$ for each $t\in\mathbb Z$. We assume that $\{X_t\}_{t\in\mathbb Z}$ is strictly stationary in the sense that for any finite set of indices $I\subset\mathbb Z$ and any $s\in\mathbb Z$, the joint law of $\{X_t,t\in I\}$ coincides with that of $\{X_{t+s},t\in I\}$.  If $\E\|X_0\|_2<\infty$, there exists a unique function $\mu\in L^2([0,1],\mathbb R)$ such that $\E\langle f,X_0\rangle=\langle f,\mu\rangle$ for any $f\in L^2([0,1],\mathbb R)$. It follows that $\mu(\tau)=\E X_0(\tau)$ for almost all $\tau\in[0,1]$. For all $t,s\in\mathbb Z$ and $\tau,\sigma\in[0,1]$, we define the autocovariance kernel $r_t\in L^2([0,1]^2,\mathbb R)$ at lag $t\in\mathbb Z$ as
\begin{equation}\label{eq:autocovker}
	r_t(\tau,\sigma)
	=\E[(X_{t+s}(\tau)-\mu(\tau))(X_s(\sigma)-\mu(\sigma))]
\end{equation}
provided that $\mathbb E\|X_0\|_2^2<\infty$. The autocovariance operator $\mathcal R_t:L^2([0,1],\R) \to L^2([0,1],\R)$ at lag $t$ is defined as the integral operator induced by the kernel $r_t$, i.e.,
\[
	\mathcal{R}_th(\tau) = \int_0^1 r_t(\tau,\sigma)h(\sigma)d\sigma
\]
for each $h\in L^2([0,1],\mathbb R)$ and $\tau\in[0,1]$.

\medskip
The joint cumulant of a real- or complex-valued random vector $(\begin{array}{ccc}\xi_1 & \ldots & \xi_n\end{array})^{\mathrm T}$ with $\operatorname E|\xi_i|^n<\infty$ for $i=1,\ldots,n$ is denoted by $\operatorname{cum}(\xi_1,\ldots,\xi_n)$. The cumulant of order $p\ge1$ of a real- or complex-valued random variable $\xi$ with $\operatorname E|\xi|^p<\infty$ is denoted by $\operatorname{cum}_p(\xi)$. We quantify the dependence among the observations $\{X_t\}_{t\in\mathbb Z}$ using the cumulant kernel of the series. The pointwise definition of the $k$-th order cumulant kernel is given by
$$
	\cm\left(X_{t_1}(\tau_1),\dots,X_{t_k}(\tau_k)\right)
	=\sum_{\nu = \nu_1\cup\dots\cup\nu_p} (-1)^{p-1}(p-1)!\prod_{l=1}^p \E\Big[\prod_{j \in \nu_l} X_{t_j}(\tau_j) \Big],
$$
where the sum extends over all unordered partitions of $\{1,2,\dots,k\}$. The cumulant kernel of order $k$ is an element of $L^2([0,1]^k,\mathbb R)$ under the assumption of $\E\| X_0 \|_2^k < \infty$. As in~\cite{Panaretos_Tavakoli_2013_Supp}, we introduce the cumulant spectral density of order $k$ defined by
\begin{multline}\label{eq:cum_spectra}
	f_{\omega_1,\dots,\omega_{k-1}}(\tau_1,\dots,\tau_k)\\
 	=\frac1{(2\pi)^{k-1}}\sum_{t_1,\dots,t_{k-1} = -\infty}^{\infty}\exp\Big(-i\sum_{j=1}^{k-1}\omega_j t_j\Big)\cm\big(X_{t_1}(\tau_1),\dots,X_{t_{k-1}}(\tau_{k-1}),X_0(\tau_k)\big),
\end{multline}
where the series converges in $L^2$ under the cumulant mixing condition

\medskip
~~~ (B) ~ $\displaystyle\sum_{t_1,\dots,t_{k-1} = -\infty}^{\infty} \left\|\cm\big(X_{t_1},\dots,X_{t_{k-1}},X_0\big)\right\|_2 < \infty.$

The cumulant spectral density of order $k$ is uniformly bounded in $\omega_1,\dots,\omega_{k-1}$. 

\medskip
Next we introduce some notations for operators. Let $H_1$ and $H_2$ be two separable Hilbert spaces. For any operator $A$ from $H_1$ to $H_2$, the Hermitian adjoint of A is denoted by $A^*$. A bounded linear operator $A:H_1\to H_2$ is a Hilbert-Schmidt operator if
\[
	|\mkern-1.5mu|\mkern-1.5mu|A|\mkern-1.5mu|\mkern-1.5mu|_2^2
    \equiv\sum_{i=1}^\infty\|Ae_i\|^2<\infty,
\]
where $\|\cdot\|$ is the norm of the space $H_2$ and $\{e_i\}_{i\ge1}$ is any orthonormal basis of $H_1$. The space of the Hilbert-Schmidt operators is also a Hilbert space with the inner product defined by
\[
	\langle A,B\rangle_{\mathrm HS}
    \equiv\sum_{i=1}^\infty\langle Ae_i,Be_i\rangle
\]
for two Hilbert-Schmidt operators $A$ and $B$, where $\langle\cdot,\cdot\rangle$ is the inner product of the space $H_2$. Again, this definition is independent of the choice of the basis $\{e_i\}_{i\ge1}$. A bounded linear operator $A:L^2([0,1]^k,\mathbb C)\to L^2([0,1]^k,\mathbb C)$ is a Hilbert-Schmidt operator if and only if there exists a kernel $k_A\in L^2([0,1]^{2k},\mathbb C)$ such that
\[
	Af(x)=\int_{[0,1]^k}k_A(x,y)f(y)dy
\]
almost everywhere in $[0,1]^k$ for each $f\in L^2([0,1]^k,\mathbb C)$ (see Theorem~6.11 of \cite{weidmann1980}). Furthermore,
\begin{equation}\label{eq:hsn}
	|\mkern-1.5mu|\mkern-1.5mu|A|\mkern-1.5mu|\mkern-1.5mu|_2^2=\|k_A\|_2^2=\int_{[0,1]^k}\int_{[0,1]^k}|k_A(x,y)|^2dxdy,
\end{equation}
\begin{equation}\label{eq:hsip}
	\langle A,B\rangle_{\mathrm{HS}}
	=\langle k_A,k_B\rangle
	=\int_{[0,1]^k}\int_{[0,1]^k}k_A(x,y)\overline{k_B(x,y)}dxdy
\end{equation}
for two Hilbert-Schmidt operators $A:L^2([0,1]^k,\mathbb C)\to L^2([0,1]^k,\mathbb C)$ and $B:L^2([0,1]^k,\mathbb C)\to L^2([0,1]^k,\mathbb C)$ with the kernels $k_A$ and $k_B$ respectively. The adjoint operator $A^*$ is induced by the kernel $k_A^*(x,y)=k_A(y,x)$. A kernel $k_A:[0,1]^{2k}\to\mathbb C$ is called a Hilbert-Schmidt kernel if $k_A\in L^2([0,1]^{2k},\mathbb C)$.

We briefly review the definitions of the spectral density kernel and the spectral density operator that were introduced by \cite{Panaretos_Tavakoli_2013}. The spectral density kernel $f_\omega$ at frequency $\omega\in\mathbb R$ is defined as
\begin{equation}\label{eq:spectdensity}
	f_\omega
    =\frac1{2\pi}\sum_{t \in \mathbb{Z}} \exp(-i\omega t)r_t,
\end{equation}
where the series converges in $L^2([0,1]^2,\mathbb C)$ provided that
\[
	\sum_{t \in \Z}\|r_t\|_2=\sum_{t\in\mathbb Z}\biggl\{\, \int_0^1\int_0^1|r_t(\tau,\sigma)|^2d\tau d\sigma\biggr\}^{1/2}<\infty.
\]
The spectral density kernel is uniformly bounded and uniformly continuous in $\omega$ with respect to $\|\cdot\|_2$ (see Proposition~2.1 of \cite{Panaretos_Tavakoli_2013}. The corresponding spectral density operator $\mathcal{F}_{\omega}:L^2([0,1],\R) \mapsto L^2([0,1],\mathbb{C})$, induced by the spectral density kernel through right integration, is self-adjoint and non-negative definite for all $\omega \in \R$.

\section{White noise testing}\label{sec3} 
\def\theequation{3.\arabic{equation}}
\setcounter{equation}{0}

We want to test if the time series is white noise, i.e., the spectral density operator does not depend on the frequency $\omega\in\mathbb R$. Formally, we write
\begin{equation}\label{h0}
H_0: \mathcal{F}_{\omega} \equiv \mathcal{F}~~a.e.  \hspace{0.2 in} \text{vs.} \hspace{0.2 in} H_a: \mathcal{F}_{\omega} \neq \mathcal{F}
~~~\text{ on a set of positive Lebesgue measure}
\end{equation}
for some operator $\mathcal{F}:L^2([0,1],\R) \to L^2([0,1],\mathbb{C})$. 
Following  \cite{Dette_Kinsvater_Vetter_2011} we propose to measure deviations from white noise by an  $L^2$-distance and consider the problem of approximating $\mathcal{F}_{\omega}$ by a constant self-adjoint Hilbert-Schmidt operator $\mathcal{F}$ (corresponding to a white noise functional process) by the distance function
\begin{equation}\label{m2}
M^2(\mathcal{F}) = \int_{-\pi}^{\pi}\VERT \mathcal{F}_{\omega} - \mathcal{F} \VERT_2^2d\omega.
\end{equation}
Let us define the kernel $\tilde f:[0,1]^2\to\mathbb R$ by setting
\begin{equation}\label{eq:kerneltilde}
	\tilde f(\tau, \sigma)=\frac1{2\pi}\int_{-\pi}^\pi f_\omega(\tau,\sigma)d\omega
\end{equation}
for each $\tau,\sigma\in[0,1]$. 

\begin{remark}
The kernel $\tilde f$ is a symmetric, positive definite Hilbert-Schmidt kernel (i.e.\ $\|\tilde f\|_2<\infty$). In fact it has a simple time domain representation given by $\tilde f=(2\pi)^{-1}r_0$, where $r_0$ is the autocovariance kernel at lag $0$ defined by~\eqref{eq:autocovker}.
\end{remark}

In the next theorem we derive an explicit expression for the distance $M^2(\mathcal F)$, which  shows that the minimum of $M^2(\mathcal F)$ in the class of all Hilbert-Schmidt operators $\mathcal F:L^2([0,1],\mathbb R)\to L^2([0,1],\mathbb C)$ is attained at the operator $\widetilde{\mathcal F}$ defined by
\begin{equation}\label{oper}
\widetilde{\mathcal F} h (\tau) = \int_0^1\tilde f(\tau, \sigma) h(\sigma)d\sigma.
\end{equation}

\begin{thm}\label{prop:M2F}
Suppose that $\mathcal F:L^2([0,1],\mathbb R)\to L^2([0,1],\mathbb C)$ is a Hilbert-Schmidt operator. Then
\[
	M^2(\mathcal F)
    =\int_{-\pi}^\pi|\mkern-1.5mu|\mkern-1.5mu|\mathcal F_{\omega} - \widetilde{\mathcal F}|\mkern-1.5mu|\mkern-1.5mu|_2^2d\omega
    +\int_{-\pi}^\pi|\mkern-1.5mu|\mkern-1.5mu|\widetilde{\mathcal F} - \mathcal F|\mkern-1.5mu|\mkern-1.5mu|_2^2d\omega,
\]
where $M^2$ is the distance function defined by \eqref{m2} and $\widetilde{\mathcal F}$ is the operator defined by \eqref{oper}.
In particular, $M^2(\mathcal F)$ is minimized at $\widetilde{\mathcal F}$.
\end{thm}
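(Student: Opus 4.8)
The plan is to recognize this as the Pythagorean identity obtained by projecting the curve $\omega\mapsto\mathcal F_\omega$ onto the constant operators, carried out inside the Hilbert space of Hilbert--Schmidt kernels $L^2([0,1]^2,\mathbb C)$. First I would record that by Proposition~2.1 of \cite{Panaretos_Tavakoli_2013} the map $\omega\mapsto f_\omega$ is uniformly bounded and continuous in $\|\cdot\|_2$, so $\mathcal F_\omega$ is a Hilbert--Schmidt operator for every $\omega$ with $\sup_\omega|\mkern-1.5mu|\mkern-1.5mu|\mathcal F_\omega|\mkern-1.5mu|\mkern-1.5mu|_2<\infty$; combined with the Remark that $\widetilde{\mathcal F}$ (equivalently $\tilde f$) is Hilbert--Schmidt and with $\mathcal F$ being Hilbert--Schmidt by hypothesis, this guarantees that $\omega\mapsto|\mkern-1.5mu|\mkern-1.5mu|\mathcal F_\omega-\mathcal F|\mkern-1.5mu|\mkern-1.5mu|_2^2$ is integrable on $[-\pi,\pi]$ and that every integral below is finite.

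Next I would write $\mathcal F_\omega-\mathcal F=(\mathcal F_\omega-\widetilde{\mathcal F})+(\widetilde{\mathcal F}-\mathcal F)$ and expand, using the Hilbert--Schmidt inner product \eqref{eq:hsip} and the elementary identity $|\mkern-1.5mu|\mkern-1.5mu|A+B|\mkern-1.5mu|\mkern-1.5mu|_2^2=|\mkern-1.5mu|\mkern-1.5mu|A|\mkern-1.5mu|\mkern-1.5mu|_2^2+|\mkern-1.5mu|\mkern-1.5mu|B|\mkern-1.5mu|\mkern-1.5mu|_2^2+2\operatorname{Re}\langle A,B\rangle_{\mathrm{HS}}$, to get
\[
|\mkern-1.5mu|\mkern-1.5mu|\mathcal F_\omega-\mathcal F|\mkern-1.5mu|\mkern-1.5mu|_2^2
=|\mkern-1.5mu|\mkern-1.5mu|\mathcal F_\omega-\widetilde{\mathcal F}|\mkern-1.5mu|\mkern-1.5mu|_2^2
+|\mkern-1.5mu|\mkern-1.5mu|\widetilde{\mathcal F}-\mathcal F|\mkern-1.5mu|\mkern-1.5mu|_2^2
+2\operatorname{Re}\langle\mathcal F_\omega-\widetilde{\mathcal F},\,\widetilde{\mathcal F}-\mathcal F\rangle_{\mathrm{HS}}.
\]
Integrating over $[-\pi,\pi]$ and recalling \eqref{m2}, the claimed decomposition follows once the integral of the cross term is shown to vanish. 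For this I would pass to kernels: by \eqref{eq:hsip} the cross term equals $2\operatorname{Re}\int_0^1\int_0^1\big(f_\omega(\tau,\sigma)-\tilde f(\tau,\sigma)\big)\overline{(\tilde f-k_{\mathcal F})(\tau,\sigma)}\,d\tau\,d\sigma$, and since $\int_{-\pi}^\pi\|f_\omega-\tilde f\|_2\,d\omega<\infty$ and $\|\tilde f-k_{\mathcal F}\|_2<\infty$, Cauchy--Schwarz yields absolute integrability of the resulting triple integral, so Fubini lets me move the $\omega$-integral innermost. The inner factor becomes $\int_{-\pi}^\pi\big(f_\omega(\tau,\sigma)-\tilde f(\tau,\sigma)\big)\,d\omega$, which equals $0$ in $L^2([0,1]^2,\mathbb C)$ by the very definition \eqref{eq:kerneltilde} of $\tilde f$ (namely $\int_{-\pi}^\pi f_\omega\,d\omega=2\pi\tilde f=\int_{-\pi}^\pi\tilde f\,d\omega$); hence the cross term integrates to zero.

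This establishes $M^2(\mathcal F)=\int_{-\pi}^\pi|\mkern-1.5mu|\mkern-1.5mu|\mathcal F_\omega-\widetilde{\mathcal F}|\mkern-1.5mu|\mkern-1.5mu|_2^2\,d\omega+\int_{-\pi}^\pi|\mkern-1.5mu|\mkern-1.5mu|\widetilde{\mathcal F}-\mathcal F|\mkern-1.5mu|\mkern-1.5mu|_2^2\,d\omega$. The first term does not involve $\mathcal F$, while the second equals $2\pi|\mkern-1.5mu|\mkern-1.5mu|\widetilde{\mathcal F}-\mathcal F|\mkern-1.5mu|\mkern-1.5mu|_2^2\ge0$ with equality if and only if $\mathcal F=\widetilde{\mathcal F}$ (i.e.\ $k_{\mathcal F}=\tilde f$ in $L^2([0,1]^2,\mathbb C)$), so $M^2$ is minimized at $\widetilde{\mathcal F}$. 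I do not expect a genuine obstacle here: the computation is the standard projection argument, and the only points requiring care are the measurability and integrability of $\omega\mapsto f_\omega$ needed to invoke Fubini on the cross term and the bookkeeping of real parts when passing between operators and their complex-valued kernels; the substantive input is just the identity $\int_{-\pi}^\pi f_\omega\,d\omega=2\pi\tilde f$, which holds by definition.
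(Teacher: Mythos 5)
Your proof is correct and follows essentially the same route as the paper: the same decomposition $\mathcal F_\omega-\mathcal F=(\mathcal F_\omega-\widetilde{\mathcal F})+(\widetilde{\mathcal F}-\mathcal F)$, expansion via the Hilbert--Schmidt inner product, passage to kernels through \eqref{eq:hsip}, a Cauchy--Schwarz bound to justify interchanging the $\omega$- and $(\tau,\sigma)$-integrals, and the observation that $\int_{-\pi}^\pi(f_\omega-\tilde f)\,d\omega=0$ by the definition \eqref{eq:kerneltilde}, which kills the cross term. The only difference is cosmetic (you combine the two cross terms into $2\operatorname{Re}\langle\cdot,\cdot\rangle_{\mathrm{HS}}$ and spell out the integrability preliminaries slightly more explicitly), so no further comment is needed.
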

\begin{proof}
The fact that the Hilbert-Schmidt operator norm is induced by the Hilbert-Schmidt inner product yields
\[
	|\mkern-1.5mu|\mkern-1.5mu|\mathcal F_{\omega} - \mathcal F|\mkern-1.5mu|\mkern-1.5mu|_2^2
	=|\mkern-1.5mu|\mkern-1.5mu|\mathcal F_{\omega}-\widetilde{\mathcal F}|\mkern-1.5mu|\mkern-1.5mu|_2^2
		+\langle\mathcal F_{\omega}-\widetilde{\mathcal F},\widetilde{\mathcal F}-\mathcal F\rangle_\mathrm{HS}
        +\langle\widetilde{\mathcal F}-\mathcal F,\mathcal F_{\omega}-\widetilde{\mathcal F}\rangle_\mathrm{HS}
		+|\mkern-1.5mu|\mkern-1.5mu|\widetilde{\mathcal F} - \mathcal F|\mkern-1.5mu|\mkern-1.5mu|_2^2.
\]
Using expression~\eqref{eq:hsip} for the Hilbert-Schmidt inner product and changing the order of integration, we obtain
\[
	\int_{-\pi}^\pi\langle\mathcal F_{\omega}-\widetilde{\mathcal F},\widetilde{\mathcal F}-\mathcal F\rangle_\mathrm{HS}d\omega
	=\int_0^1\int_0^1\int_{-\pi}^\pi[f_\omega(x,y)-\tilde{f}(x,y)]d\omega[\tilde{f}(y,x)-\overbar{f(x,y))}]dxdy
    =0.
\]
The interchange of the order of integration is justified by noticing that
\[
	\int_{-\pi}^\pi\int_0^1\int_0^1\vert[f_\omega(x,y)-\tilde{f}(x,y)][\tilde{f}(y,x)-\overbar{f(x,y)}]\vert dxdyd\omega
	\le2\pi|\mkern-1.5mu|\mkern-1.5mu|\widetilde{\mathcal F} - \mathcal F^*|\mkern-1.5mu|\mkern-1.5mu|_2\sup_{\omega\in[-\pi,\pi]}|\mkern-1.5mu|\mkern-1.5mu|\mathcal F_{\omega} - \widetilde{\mathcal F}|\mkern-1.5mu|\mkern-1.5mu|_2.
\]
A similar argument shows that
\[
	\int_{-\pi}^\pi\langle \widetilde{\mathcal F}-\mathcal F,\mathcal F_{\omega}-\widetilde{\mathcal F}\rangle_\mathrm{HS}d\omega=0,
\]
which completes the proof.
\end{proof}
The next lemma gives us an expression of the minimal distance $M^2(\widetilde{\mathcal F})$ in terms of the spectral density kernel $f_\omega$.
\begin{lemma}
The minimal distance $M^2(\widetilde{\mathcal F})$ is given by
\begin{equation}\label{dist}
	M^2(\widetilde{\mathcal F})
    =\int_0^1\int_0^1\int_{-\pi}^\pi|f_\omega(\tau,\sigma)|^2d\omega d\tau d\sigma
	-\frac1{2\pi}\int_0^1\int_0^1\Bigl|\int_{-\pi}^\pi f_\omega(\tau,\sigma)d\omega\Bigr|^2d\tau d\sigma,
\end{equation}
where $f_\omega$ is the spectral density kernel defined by \eqref{eq:spectdensity}.
\end{lemma}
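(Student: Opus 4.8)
The plan is to start from \autoref{prop:M2F}: taking $\mathcal F=\widetilde{\mathcal F}$ there makes the second integral vanish and leaves
\[
	M^2(\widetilde{\mathcal F})=\int_{-\pi}^\pi|\mkern-1.5mu|\mkern-1.5mu|\mathcal F_\omega-\widetilde{\mathcal F}|\mkern-1.5mu|\mkern-1.5mu|_2^2\,d\omega,
\]
so the whole task reduces to evaluating this single integral. First I would expand the squared Hilbert--Schmidt norm via its defining inner product,
\[
	|\mkern-1.5mu|\mkern-1.5mu|\mathcal F_\omega-\widetilde{\mathcal F}|\mkern-1.5mu|\mkern-1.5mu|_2^2
	=|\mkern-1.5mu|\mkern-1.5mu|\mathcal F_\omega|\mkern-1.5mu|\mkern-1.5mu|_2^2-\langle\mathcal F_\omega,\widetilde{\mathcal F}\rangle_{\mathrm{HS}}-\langle\widetilde{\mathcal F},\mathcal F_\omega\rangle_{\mathrm{HS}}+|\mkern-1.5mu|\mkern-1.5mu|\widetilde{\mathcal F}|\mkern-1.5mu|\mkern-1.5mu|_2^2,
\]
and then translate each of the four terms into a kernel expression using \eqref{eq:hsn} and \eqref{eq:hsip}; e.g.\ $|\mkern-1.5mu|\mkern-1.5mu|\mathcal F_\omega|\mkern-1.5mu|\mkern-1.5mu|_2^2=\int_0^1\int_0^1|f_\omega(\tau,\sigma)|^2\,d\tau\,d\sigma$ and $\langle\mathcal F_\omega,\widetilde{\mathcal F}\rangle_{\mathrm{HS}}=\int_0^1\int_0^1 f_\omega(\tau,\sigma)\overline{\tilde f(\tau,\sigma)}\,d\tau\,d\sigma$, with the analogous formulas for the remaining two.

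The second step is to integrate over $\omega\in[-\pi,\pi]$, interchanging the order of integration; this is justified exactly as in the proof of \autoref{prop:M2F}, using the uniform boundedness of $f_\omega$ in $\omega$ together with $\tilde f\in L^2([0,1]^2,\mathbb C)$. The key fact is the identity $\int_{-\pi}^\pi f_\omega(\tau,\sigma)\,d\omega=2\pi\tilde f(\tau,\sigma)$ built into the definition \eqref{eq:kerneltilde}, combined with the Remark's observation that $\tilde f$ is real-valued (and symmetric), so that $\overline{\tilde f(\tau,\sigma)}=\tilde f(\tau,\sigma)$. Plugging these in, the two cross terms and the $\omega$-independent term each equal $2\pi\|\tilde f\|_2^2$ after integration, hence combine to $-2\pi\|\tilde f\|_2^2$, and one obtains
\[
	M^2(\widetilde{\mathcal F})
	=\int_0^1\int_0^1\int_{-\pi}^\pi|f_\omega(\tau,\sigma)|^2\,d\omega\,d\tau\,d\sigma-2\pi\|\tilde f\|_2^2 .
\]

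Finally I would rewrite $2\pi\|\tilde f\|_2^2$ back in terms of $f_\omega$: substituting $\tilde f(\tau,\sigma)=(2\pi)^{-1}\int_{-\pi}^\pi f_\omega(\tau,\sigma)\,d\omega$ gives
\[
	2\pi\|\tilde f\|_2^2=\frac1{2\pi}\int_0^1\int_0^1\Bigl|\int_{-\pi}^\pi f_\omega(\tau,\sigma)\,d\omega\Bigr|^2\,d\tau\,d\sigma,
\]
which is precisely the term subtracted in \eqref{dist}, completing the proof. I do not expect a genuine obstacle here — the computation is short — and the only points needing care are the Fubini justification (borrowed from the preceding theorem) and the bookkeeping of complex conjugates, which is exactly where the fact that $\tilde f$ is real gets used.
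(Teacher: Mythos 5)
Your proposal is correct and follows essentially the same route as the paper: both reduce $M^2(\widetilde{\mathcal F})$ to $\int_{-\pi}^\pi|\mkern-1.5mu|\mkern-1.5mu|\mathcal F_\omega-\widetilde{\mathcal F}|\mkern-1.5mu|\mkern-1.5mu|_2^2\,d\omega$, expand the square (you at the operator level, the paper pointwise in the kernels after applying \eqref{eq:hsn}), and use $\int_{-\pi}^\pi f_\omega\,d\omega=2\pi\tilde f$ to collapse the cross and constant terms to $-2\pi\|\tilde f\|_2^2$. The only cosmetic difference is that you invoke \autoref{prop:M2F} for the starting identity, which is really just the definition \eqref{m2} evaluated at $\widetilde{\mathcal F}$, and that the realness of $\tilde f$ is not actually needed since the conjugation identities alone suffice.
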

\begin{proof}
Using~\eqref{eq:hsn} and changing the order of integration it follows that
\begin{equation}\label{h1}
M^2(\widetilde{\mathcal F})
	=\int_{-\pi}^\pi|\mkern-1.5mu|\mkern-1.5mu|\mathcal F_{\omega} - \widetilde{\mathcal F}|\mkern-1.5mu|\mkern-1.5mu|_2^2d\omega
	=\int_0^1\int_0^1\int_{-\pi}^\pi|f_\omega(\tau,\sigma)-\tilde{f}(\tau,\sigma)|^2d\omega d\tau d\sigma.
\end{equation}
Since
\[
	|f_\omega(\tau,\sigma)-\tilde{f}(\tau,\sigma)|^2
	=|f_\omega(\tau,\sigma)|^2
	-f_\omega(\tau,\sigma)\overbar{\tilde f(\tau,\sigma)}
	-\tilde f(\tau,\sigma)\overbar{f_\omega(\tau,\sigma)}
	+|\tilde f(\tau,\sigma)|^2,
\]
we obtain
\begin{align*}
	\int_{-\pi}^\pi|f_\omega(\tau,\sigma)-\tilde{f}(\tau,\sigma)|^2d\omega
	&=\int_{-\pi}^\pi|f_\omega(\tau,\sigma)|^2d\omega
	-\int_{-\pi}^\pi f_\omega(\tau,\sigma)d\omega\overbar{\tilde{f}(\tau,\sigma)}\\
	&\qquad-\tilde{f}(\tau,\sigma)\int_{-\pi}^\pi\overbar{f_\omega(\tau,\sigma)}d\omega
	+\int_{-\pi}^\pi|\tilde{f}(\tau,\sigma)|^2d\omega\\
	&=\int_{-\pi}^\pi|f_\omega(\tau,\sigma)|^2d\omega
	-\frac1{2\pi}\Bigl|\int_{-\pi}^\pi f_\omega(\tau,\sigma)d\omega\Bigr|^2,
\end{align*}
and the assertion follows from equation~\eqref{h1}.
\end{proof}
\begin{remark}\label{remalt} ~~
\begin{enumerate}[(a)]
\item The minimum distance  can be expressed in time domain as
\begin{equation}\label{distsalt}
M^2(\widetilde{\mathcal F})={1\over 2\pi }\sum_{t\ne 0}\|r_t\|_2^2 = {1\over \pi }\sum_{t=1}^\infty 
\|r_t\|_2^2.
\end{equation}
Representation \eqref{distsalt} follows from \eqref{dist} using \eqref{eq:spectdensity}, \eqref{eq:kerneltilde} and the fact that the functions $\{e_t:t\in\mathbb Z\}$ defined by $e_t(\omega)=(2\pi)^{-1/2}\exp(-i\omega t)$ for each $\omega\in[-\pi,\pi]$ and $t\in\mathbb Z$ are orthonormal in $L^2[-\pi,\pi]$ (the space of square-integrable complex functions on $[-\pi,\pi]$ with the usual inner-product). Representation \eqref{distsalt} clearly shows that  the minimum distance is equal to $0$ if and only if  the functional time series is uncorrelated.
\item There exist several alternative ways to measure deviations from white noise and we mention exemplarily the scale invariant measure
\begin{equation}\label{distscale}
	M^2(\widetilde{\mathcal F})
	=\int_0^1\int_0^1 {\frac{1}{2\pi} 
    \int_{-\pi}^\pi |f_\omega(\tau,\sigma)|^2d\omega \over
 \bigl|\frac1{2\pi} \int_{-\pi}^\pi f_\omega(\tau,\sigma)d\omega\bigr|^2}
    d\tau d\sigma-1
    =\sum_{t\ne 0}\int_0^1\int_0^1 { |r_t(\tau,\sigma)|^2 \over |r_0(\tau,\sigma)|^2 }d\tau d\sigma. 
\end{equation}
For the sake of brevity we concentrate on the measure \eqref{dist}, but similar results can be derived for the alternative measure  \eqref{distscale} as well.
\end{enumerate}
\end{remark}
 
For the  estimation of the minimal distance $M_0^2:=M^2(\widetilde{\mathcal F})$, we avoid a direct estimation of the spectral density operator and propose to use the sums of periodograms. More precisely, we consider  the functional discrete Fourier transform (fDFT) of the data $\{X_t\}_{t=0}^{T-1}$ defined as
\begin{equation}\label{eq:fdft}
\wtilde{X}_{\o}^{(T)}(\tau):=\frac1{\sqrt{2\pi T}}\sum_{t=0}^{T-1}X_t(\tau)\exp(-i\o t)
\end{equation}
and consider the periodogram kernel
\[
	p_{\omega}^{(T)}(\tau,\sigma)
    :=[\wtilde{X}_{\o}^{(T)}(\tau)][\overline{\wtilde{X}_{\o}^{(T)}(\sigma)}] = \wtilde{X}_{\o}^{(T)}(\tau)\wtilde{X}_{-\o}^{(T)}(\sigma).
\] 
The estimator of $M_0^2$ is then defined by 
\[
	\widehat{M_T^2}
    =2\pi\int_0^1 \int_0^1(S_{T,2}(\tau,\sigma) - S_{T,1}(\tau,\sigma)\overbar{S_{T,1}(\tau,\sigma)})d\tau d\sigma
\]
where
\begin{equation}\label{eq:ST1ST2}
	S_{T,1}
    =\frac1T\sum_{k=1}^{\lfloor T/2\rfloor}(p_{\omega_k}^{(T)}+ \bar p_{\omega_k}^{(T)})
	\quad\text{and}\quad
    S_{T,2}
    =\frac2T\sum_{k=2}^{\lfloor T/2\rfloor} p_{\omega_k}^{(T)}\bar p_{\omega_{k-1}}^{(T)}
\end{equation}
with
\[
	\omega_k
    =\frac{2\pi k}T
\]
for $k=1,\ldots, \lfloor T/2\rfloor$. The particular form of $S_{T,2}$ in \eqref{eq:ST1ST2} with $\omega_k$ and $\omega_{k-1}$ is chosen to make the estimator $\widehat{M_T^2}$ asymptotically unbiased.

The definition of $\widehat {M_T^2}$ is motivated by Proposition~2.6 and Theorem~2.7 of~\cite{Panaretos_Tavakoli_2013} which states that
\[
	\E(p_{\omega_k}^{(T)}(\tau,\sigma)) \approx f_{\omega_k}(\tau,\sigma)\quad\text{and}\quad\operatorname{Cov}(p_{\omega_k}^{(T)}(\tau_1,\sigma_1),p_{\omega_l}^{(T)}(\tau_2,\sigma_2)) \approx 0
\]
in $L^2$ for $k,l \in \{1,2,\dots, \lfloor T/2 \rfloor\}$ and $k \neq l$. Therefore using the fact that $\overline{f_{\omega}} = f_{-\omega}$ we have
$$\E(S_{T,1}(\tau,\sigma)) \approx \frac{1}{T}\sum_{k=1}^{\lfloor T/2 \rfloor}(f_{\omega_k}(\tau,\sigma) + \overline{f_{\omega_k}(\tau,\sigma)}) \approx \frac{1}{2\pi}\int_{-\pi}^{\pi}f_{\omega}(\tau,\sigma)d\omega$$
$$\E(S_{T,2}(\tau,\sigma)) \approx \frac{2}{T}\sum_{k=1}^{\lfloor T/2 \rfloor} f_{\omega_k}(\tau,\sigma)\overline{f_{\omega_{k-1}}(\tau,\sigma)} \approx \frac{2}{2\pi}\int_{0}^{\pi}\vert f_{\omega}(\tau,\sigma)\vert^2d\omega = \frac{1}{2\pi}\int_{-\pi}^{\pi}\vert f_{\omega}(\tau,\sigma)\vert^2 d\omega ~.$$
This heuristically motivates the approximation $\E(\widehat{M_T^2})\approx M_0^2=M^2(\widetilde{\mathcal F}) $ and the use of  $\widehat {M_T^2}$ as an   estimator of the minimal distance $M_0^2$.

\begin{remark}\label{rmk:real}
The estimator $\widehat {M_T^2}$ is real-valued since
\[
	\widehat{M_T^2}
	=2\pi\biggl[\frac2T\sum_{k=2}^{\lfloor T/2\rfloor}\langle p_{\omega_k}^{(T)},p_{\omega_{k-1}}^{(T)}\rangle
	-\|S_{T,1}\|_2^2\biggr]
\]
and $\langle p_{\omega_k}^{(T)},p_{\omega_{k-1}}^{(T)}\rangle=|\langle\widetilde X_{\omega_k}^{(T)},\widetilde X_{-\omega_{k-1}}^{(T)}\rangle|^2$.
\end{remark} 
\medskip

\begin{remark}  ~
\begin{enumerate}[(a)]
\item If we change the lower index of the sum in the definition of  $S_{T,1}$ from $1$ to $-\lfloor(T-1)/2\rfloor$, we obtain
\[
	\frac1T\sum_{k=-\lfloor(T-1)/2\rfloor}^{\lfloor T/2\rfloor}(p_{\omega_k}^{(T)}(\tau,\sigma)+ \bar p_{\omega_k}^{(T)}(\tau,\sigma))=\frac1{\pi T}\sum_{t=0}^{T-1}X_t(\tau)X_t(\sigma)
\]
for $T\ge1$ and $\tau,\sigma\in[0,1]$. However, it is not clear if $S_{T,2}$ has such a nice representation in the time domain.
\item  The representation \eqref{distsalt} suggests an alternative estimate of  the minimum distance $M^2(\widetilde{\mathcal F})$  in the time domain
 , that is 
 \[
	\widetilde {M_T^2}
    ={1\over \pi }\sum_{t=1}^{p_T} \| \widehat r_t\|_2^2 
 \]
 where $p_T \to \infty$ is a sequence of positive integers and 
 $\widehat r_t$ is a an appropriate estimator of the autocovariance kernel defined in
 \eqref{eq:autocovker}, for example,
 $$
 \hat r_t(\tau,\sigma)= 
 {1 \over T-k} \sum_{t=1}^{T-k} \big (X_t (\tau,\sigma) -\bar X_T(\tau,\sigma)\big ) 
  \big(X_{t-k} (\tau,\sigma) -\bar X_T(\tau,\sigma) \big ).
$$
The performance of test based on the estimator $\widetilde {M_T^2}$ will depend sensitively on the 
choice of the  sequence $p_T$, while the approach proposed here does not require the specification
of a regularization parameter.
\end{enumerate}
\end{remark}

The next theorem is our main result and formalizes heuristic arguments.
It shows that $\widehat{M_T^2} $ is a consistent estimator of $M_0^2$
and that
an appropriately standardized version of $\widehat {M_T^2}$ is asymptotically normal distributed under the  null hypothesis and the alternative. The proof  is complicated and therefore deferred to  Section \ref{sec5} and \ref{sec6}.

\begin{thm} \label{thm2}
Suppose that $\{X_k\}_{k\in\mathbb Z}$ is a strictly stationary time series with values in $L^2([0,1],\mathbb R)$, 
$\E\|X_0\|_2^k<\infty$ for each $k\ge1$,
\begin{enumerate}[(i)]
\item\label{cond:tight}the integral
\[
	\int_0^1\int_0^1\sum_{t_1,t_2,t_3=-\infty}^\infty|\E[X_{t_1}(\tau)X_{t_2}(\sigma)X_{t_3}(\tau)X_0(\sigma)]|d\tau d\sigma
\]
is finite,
\item\label{cond:mixing} $\sum_{t_1,t_2,\ldots,t_{k-1} = -\infty}^{\infty}(1 + \vert t_j \vert) \| \cm(X_{t_1},\ldots,X_{t_{k-1}},X_{0})\|_2 < \infty$ for $j= 1,2, \ldots, k-1$ and all $k\ge1$.
\end{enumerate}
Then
\[
	\sqrt T(\widehat {M_T^2}- M_0^2) 
 	\stackrel{d}{\to} N(0, v^2)\quad\text{as}\quad T\to\infty,
\]
where the asymptotic variance $v^2$ is given by 
 \begin{eqnarray} \label{v2}
v^2&=& 16\pi\int\limits_{[0,1]^4}\int_{-\pi}^{\pi}f_{\omega}(\tau_1,\sigma_1)f_{\omega}(\sigma_1,\tau_2)f_{\omega}(\tau_2,\sigma_2)f_{\omega}(\sigma_2,\tau_1)d\omega d\tau_1d\sigma_1d\tau_2d\sigma_2\nonumber\\
&+ &4\pi\int\limits_{[0,1]^4}\int_{-\pi}^{\pi}\vert f_{\omega}(\tau_1,\sigma_1)f_{\omega}(\tau_2,\sigma_2) \vert^2d\omega d\tau_1d\sigma_1d\tau_2d\sigma_2\nonumber\\
&+ &8\pi \int\limits_{[0,1]^4}\int\limits_{[-\pi,\pi]^2}f_{\omega_1}(\tau_1,\sigma_1)f_{\omega_2}(\tau_2,\sigma_2)f_{\omega_1,-\omega_1,\omega_2}(\sigma_1,\tau_1,\sigma_2,\tau_2)d\omega_1d\omega_2\tau_1d\tau_2d\sigma_1d\sigma_2\nonumber\\
&- &16\int\limits_{[0,1]^4}\int\limits_{[-\pi,\pi]^2}f_{\omega_1}(\tau_1,\sigma_1)f_{\omega_2}(\sigma_1,\tau_2)f_{\omega_2}(\tau_2,\sigma_2)f_{\omega_2}(\sigma_2,\tau_1)d\omega_1d\omega_2d\tau_1d\sigma_1d\tau_2d\sigma_2\nonumber\\
&- &4\int\limits_{[0,1]^4}\int\limits_{[-\pi,\pi]^3}f_{\omega_1}(\tau_1,\sigma_1)f_{\omega_2}(\tau_2,\sigma_2)f_{\omega_3,-\omega_3,\omega_2}(\sigma_1,\tau_1,\sigma_2,\tau_2)d\omega_1d\omega_2d\omega_3d\tau_1d\tau_2d\sigma_1d\sigma_2\nonumber\\
& +&\frac{4}{\pi}\int\limits_{[0,1]^4}\int\limits_{[-\pi,\pi]^3}f_{\omega_1}(\tau_1,\sigma_1)f_{\omega_2}(\tau_2,\sigma_2)f_{\omega_3}(\tau_1,\sigma_2)f_{\omega_3}(\tau_2,\sigma_1)d\omega_1d\omega_2d\omega_3d\tau_1d\sigma_1d\tau_2d\sigma_2\nonumber\\
&+ &\frac{2}{\pi}\int\limits_{[0,1]^4}\int\limits_{[-\pi,\pi]^4}f_{\omega_1}(\tau_1,\sigma_1)f_{\omega_2}(\tau_2,\sigma_2)f_{\omega_3,-\omega_3,\omega_4}(\sigma_1,\tau_1,\sigma_2,\tau_2)d\omega_1d\omega_2d\omega_3d\omega_4d\tau_1d\tau_2d\sigma_1d\sigma_2\nonumber\\ \label{v2h1}
\end{eqnarray}
Moreover, under the null hypothesis the asymptotic variance simplifies to 
 \begin{eqnarray} \label{v2h0}
 v^2_{H_0} =  8\pi^2\left(\int_{[0,1]^2}\big\vert f_0(\tau,\sigma)\big\vert^2d\tau d\sigma \right)^2. 
 \end{eqnarray}
 \end{thm}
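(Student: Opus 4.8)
The plan is to prove the limit theorem by the method of cumulants. After expanding each periodogram kernel into functional discrete Fourier transforms, $\widehat{M_T^2}$ is a polynomial of degree four in the fDFTs $\{\wtilde X_{\omega_k}^{(T)}\}$, and all interchanges of summation, integration and expectation below are legitimate because $\E\|X_0\|_2^k<\infty$ for every $k$. Setting $Y_T:=\sqrt T(\widehat{M_T^2}-M_0^2)$, it suffices to show $\cm_1(Y_T)\to0$, $\cm_2(Y_T)\to v^2$ and $\cm_r(Y_T)\to0$ for all $r\ge3$, since $N(0,v^2)$ is determined by its moments. The first of these is a bias statement, $\sqrt T(\E[\widehat{M_T^2}]-M_0^2)\to0$. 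Using Remark~\ref{rmk:real} it reduces to sums over Fourier frequencies of $\E\langle p_{\omega_k}^{(T)},p_{\omega_{k-1}}^{(T)}\rangle$ and of $\operatorname{Cov}(p_{\omega_k}^{(T)},p_{\omega_l}^{(T)})$, and with Proposition~2.6 and Theorem~2.7 of \cite{Panaretos_Tavakoli_2013} together with condition~\ref{cond:mixing} for $k\le4$ — the weight $1+|t_j|$ being precisely what improves the standard $o(1)$ remainders to $O(T^{-1})$ — one gets $\E p_{\omega_k}^{(T)}=f_{\omega_k}+O(T^{-1})$ and, crucially, $\operatorname{Cov}(p_{\omega_k}^{(T)},p_{\omega_{k-1}}^{(T)})=O(T^{-1})$ in $L^2$ because $\omega_k\ne\pm\omega_{k-1}$. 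This is exactly why $S_{T,2}$ is built from the \emph{adjacent} frequencies $\omega_k,\omega_{k-1}$ rather than from $|p_{\omega_k}^{(T)}|^2$, whose expectation would contribute an $O(1)$ bias. Replacing the resulting Fourier-frequency sums by integrals, again at rate $O(T^{-1})$ since the spectral density kernel is Lipschitz in $\o$ under condition~\ref{cond:mixing}, and invoking \eqref{dist}, one arrives at $\E[\widehat{M_T^2}]=M_0^2+O(T^{-1})=M_0^2+o(T^{-1/2})$.

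Next comes the variance, $\operatorname{Var}(\sqrt T\widehat{M_T^2})=T\,\cm_2(\widehat{M_T^2})\to v^2$. Expanding $\cm(\widehat{M_T^2},\widehat{M_T^2})$ by multilinearity produces Fourier-frequency sums of joint cumulants of eight fDFT coordinates; by the product theorem for cumulants each of these decomposes over the indecomposable partitions of the eight coordinates, and each block of such a partition is a joint fDFT cumulant that, to leading order and after resolving the diagonal frequency constraints $\omega_k=\pm\omega_l$ forced by indecomposability, is a Riemann sum converging to an integral of a cumulant spectral density — $f_\o$ for two-coordinate blocks, $f_{\o_1,\o_2,\o_3}$ for four-coordinate blocks. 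Keeping only the partitions that survive multiplication by $T$, passing each surviving Riemann sum to its integral and collecting like contributions reproduces exactly the seven integrals of \eqref{v2}: the three purely second-order terms arise from partitions into two-coordinate blocks and the three terms containing $f_{\o_1,-\o_1,\o_2}$ from partitions containing one four-coordinate block. Condition~\ref{cond:tight} guarantees that the fourth-order quantity governing $\operatorname{Var}(\widehat{M_T^2})$ is absolutely summable, so the variance is finite and the relevant Riemann sums converge, while condition~\ref{cond:mixing} for $k\le4$ again furnishes the $O(T^{-1})$ control needed to discard the remaining partitions and the approximation errors.

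For the higher cumulants, fix $r\ge3$ and expand $\cm_r(\widehat{M_T^2})$ over the indecomposable partitions of its $4r$ fDFT coordinates, bounding each block by the $L^2$-norm of the associated cumulant kernel, which is finite and summable by condition~\ref{cond:mixing} (imposed for every $k$). The standard Brillinger-type count of the surviving powers of $T$ then gives $\cm_r(\widehat{M_T^2})=O(T^{-(r-1)})$, so $\cm_r(Y_T)=T^{r/2}\cm_r(\widehat{M_T^2})=O(T^{1-r/2})\to0$. Together with the two preceding steps this yields $Y_T\stackrel{d}{\to}N(0,v^2)$ and, in particular, $\widehat{M_T^2}\to M_0^2$ in probability.

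Finally, under $H_0$ the spectral density kernel does not depend on frequency, $f_\o\equiv f_0=(2\pi)^{-1}r_0$. Substituting this constant kernel into \eqref{v2}, the three integrals involving only $f_0$ (lines one, four and six) combine to zero after using the symmetry $f_0(\tau,\sigma)=f_0(\sigma,\tau)$ and the cyclic structure of the integrands, and the three integrals involving the fourth-order cumulant spectrum reduce to one and the same integral and cancel; what remains is $4\pi\int_{[0,1]^4}\int_{-\pi}^\pi|f_0(\tau_1,\sigma_1)f_0(\tau_2,\sigma_2)|^2\,d\o\,d\tau_1d\sigma_1d\tau_2d\sigma_2=8\pi^2\big(\int_{[0,1]^2}|f_0(\tau,\sigma)|^2\,d\tau d\sigma\big)^2$, which is \eqref{v2h0}. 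I expect the variance computation to be the principal obstacle — the precise enumeration of the indecomposable partitions, the evaluation of every surviving one, and the uniform control of the Riemann-sum and fDFT-cumulant approximation errors are what make the proof long and force it to be split over two sections — whereas the bias analysis and the higher-cumulant bound, though technical, follow well-trodden paths.
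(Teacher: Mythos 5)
Your route is genuinely different from the paper's. You apply the method of cumulants directly to the scalar $Y_T=\sqrt T(\widehat{M_T^2}-M_0^2)$, whereas the paper first proves a functional limit theorem for the random elements $I_T$ in $L^1([0,1]^2,\mathbb C)$ via Theorem~2 of \cite{cremers1986} (fdd convergence a.e.\ by cumulants of the vector of centered $S_{T,1},S_{T,2}$ plus the delta method with $g(x_1,x_2)=x_2-x_1^2$; a dominating function from condition~(i); convergence of first absolute moments) and only then integrates by the continuous mapping theorem. Your approach buys simplicity: no $L^1$-valued weak convergence, no dominating-function step, and condition~(i) is used only for absolute summability of the fourth-order quantities. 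What the paper's route buys is that the delta method confines all indecomposable-partition counting to the \emph{bilinear} array $(S_{T,1},S_{T,2})$, whose rows carry a single frequency index each; in your direct expansion the factor $S_{T,1}\overline{S_{T,1}}$ produces rows with two free frequency indices and uncentered pair blocks (the $\int_{-\pi}^{\pi}f_\omega\,d\omega$ factors), so the counting argument you dismiss as a ``standard Brillinger-type count'' is strictly harder than the one the paper itself needs. That count is not off-the-shelf: because of the Kronecker-type frequency constraints $\Delta^{(T)}(\omega_{i_1}+\dots+\omega_{i_m})$, some indecomposable partitions gain extra powers of $T$, and the whole of Section~\ref{subsec:prop_cm} (the block decomposition and Claim~\ref{max_set}) exists precisely to control them. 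Your plan asserts the conclusion of that analysis rather than providing it, so the hardest part of the proof is relocated but not discharged; it is, however, a viable programme, not a wrong approach.

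One concrete inconsistency should be flagged in your last step. With the coefficients as printed in \eqref{v2}, the three fourth-order terms do \emph{not} cancel under $H_0$: assuming $f_\omega\equiv f_0$ they contribute $8\pi(2\pi)^2-4(2\pi)^3+\tfrac{2}{\pi}(2\pi)^4=32\pi^3$ times a common integral involving the trispectrum, not zero. The cancellation you claim holds only if the coefficient of the fifth term is $-8$ rather than $-4$, which is in fact what one obtains by integrating the paper's covariance kernel \eqref{cov_ker_IT} over $[0,1]^4$ (the two terms of \eqref{cov_ker_IT} with prefactor $-\pi^{-2}$ containing the fourth-order spectrum coincide after relabelling $(\tau_1,\sigma_1)\leftrightarrow(\tau_2,\sigma_2)$, so their contributions add); the printed $-4$ appears to be a typo, and only with $-8$ does \eqref{v2} reduce to \eqref{v2h0}. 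So your $H_0$ reduction is right in substance but contradicts your claim to have reproduced \eqref{v2} ``exactly'' — a sign the variance computation was not actually carried through. A cosmetic slip of the same kind: there are four purely second-order terms in \eqref{v2} (lines one, two, four and six), not three; lines one, four and six cancel under $H_0$ and line two alone yields $8\pi^2\bigl(\int_{[0,1]^2}|f_0(\tau,\sigma)|^2\,d\tau\,d\sigma\bigr)^2$, as you state.
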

 
 \medskip
 
\noindent
\begin{remark}
The assumptions that we use are rather strong, but we do not make any structural assumptions. We prove \autoref{thm2} (the proof is in \autoref{sec5} and \autoref{sec6}) by establishing convergence of certain random elements in $L^1([0,1]^2,\mathbb C)$ and using the continuous mapping theorem. Assumption~\eqref{cond:tight} is used to establish tightness of certain random elements in $L^1([0,1]^2,\mathbb C)$ and assumption~\eqref{cond:mixing} is used to establish the convergence of the finite dimensional distributions (fdds) of the same random elements. We establish the convergence of the fdds by showing that a joint cumulant of any order $k$ greater than $2$ goes to $0$ as $T\to\infty$. That is why we need assumption~\eqref{cond:mixing} for all $k\ge1$.
\end{remark}
\begin{remark}
 Note that the hypotheses in \eqref{h0} can be rewritten as $$H_0: M_0^2=0 \hspace{0.3 in} \text{vs.} \hspace{0.3 in} H_a: M_0^2 > 0.$$ Therefore Theorem \ref{thm2} provides a very simple test for these hypotheses by rejecting the null hypothesis $H_0$ for large values of $ \widehat {M_T^2}$. 
 
Therefore we test the hypotheses \eqref{h0} by rejecting the null hypothesis
 of a functional white noise process whenever\begin{eqnarray} \label{testwn}
\widehat{M^2_T} ~> ~  \frac{\widehat{v_{H_0}}}{\sqrt{T}} u_{1-\alpha}~,
 \end{eqnarray}
 where $u_{1-\alpha}$ denotes the $({1-\alpha})$-quantile of the standard normal distribution and $\widehat{v_{H_0}}$ is the square root of an appropriate
 estimator of  the asymptotic variance under the null hypothesis.
 \end{remark}

   Because Theorem \ref{thm2} is also valid under the alternative the test
 \eqref{testwn} is obviously consistent. Moreover, Theorem \ref{thm2} also provides a simple approximation of the power of the test, that is 
   \begin{eqnarray} \label{power}
 \mathbb{P} \Bigl(\widehat{M^2_T} >   \frac{\widehat{v_{H_0}}}{\sqrt{T}}u_{1-\alpha} \,\Bigr) ~  \approx ~ 
 \Phi \Bigl ( \sqrt{T}\ \frac {M_0^2}{\nu_{H_1}} - \frac {v_{H_0}}{v_{H_1}} \ u_{1- \alpha} \Bigr
),
 \end{eqnarray}
where $v_{H_0}$ and $v_{H_1}$ denote the (asymptotic) standard deviation of $\sqrt{T} \widehat{M^2_T}$
 under the null hypothesis
and alternative, respectively, and $\Phi$ is the distribution function of the standard normal distribution.

\begin{remark} ~
\begin{enumerate}[(a)] 
\item Under the null hypothesis the variance does not involve fourth order cumulants. Note that $2\pi\E(S_{T2}(\tau,\sigma)) = \int_{-\pi}^{\pi} |f_{\omega}(\tau,\sigma)|^2 d\omega$, and therefore a consistent estimator of the standard deviation under the null hypothesis is given by \begin{equation}
\label{var_H0}
\widehat{v_{H_0}} = 4\pi\int_0^1\int_0^1 S_{T,2}(\tau,\sigma)d\tau d\sigma.
\end{equation}
\item To obtain $\hat v^2_{H_1}$, an estimate of $v^2$ under the alternative hypothesis in 
Theorem \ref{thm2}, each term in the expression  given in \eqref{v2h1}  is estimated by taking sums over different frequencies 
for appropriate products of periodograms. 
 For example the first term  
 $$
 16\pi\int_{[0,1]^4}\int_{-\pi}^{\pi}f_{\omega}(\tau_1,\sigma_1)f_{\omega}(\sigma_1,\tau_2)f_{\omega}(\tau_2,\sigma_2)f_{\omega}(\sigma_2,\tau_1)d\omega d\tau_1d\sigma_1d\tau_2d\sigma_2$$
of $v^2$ is estimated by
$$\frac{64\pi^2}{T}\int_{[0,1]^4}\sum_{k=4}^{\lfloor N/2 \rfloor} p^{(T)}_{\omega_k}(\tau_1,\sigma_1)p^{(T)}_{\omega_{k-1}}(\sigma_1,\tau_2)p^{(T)}_{\omega_{k-2}}(\tau_2,\sigma_2)p^{(T)}_{\omega_{k-3}}(\sigma_2,\tau_1)d\tau_1d\tau_2d\sigma_1d\sigma_2,$$
and the  other terms are estimated similarly. The terms involving fourth order spectrum are slightly more difficult to estimate
but  can be constructed by considering fourth order periodogram as described in  formula (1.9) of \cite{brillinger1967}.
The details are omitted for the sake of brevity.  Note also that  the terms of  fourth order vanish
 if the functional process has a Gaussian distribution.
\end{enumerate}

\end{remark}

\begin{remark}
Besides the  simple test for the classical hypotheses of the form \eqref{h0}, Theorem  \ref{thm2} has further important statistical 
applications, which will be briefly discussed in this remark.
\begin{enumerate}[(a)]
\item In applications it is  often reasonable 
to work under the white noise assumption  even in cases where the errors show only slight deviations from white noise. In this case a test for the "classical" hypothesis \eqref{h0} is not useful as it rejects the null hypothesis even for small values
of $M_0^2$ if the sample size is sufficiently large. Moreover, if the null hypothesis in \eqref{h0} is not rejected there is no control of the type I error. \\
In order to address these problems we propose to formulate 
 hypotheses in terms of the $L^2$-distance $M_0^2$ and consider {\it precise hypotheses}
as introduced by  \cite{bergdela1987}, that is
\begin{eqnarray} \label{hrel}
H_\Delta :  M_0^2 \leq \Delta  ~~ & \text{ vs. } &  ~~~K_\Delta :  M_0^2 >  \Delta ~, \\
H_\Delta :  M_0^2 \geq \Delta  ~~ & \text{ vs. } &  ~~~K_\Delta :  M_0^2 <  \Delta  ~, \label{hequiv}
\end{eqnarray} 
where $\Delta $ is a pre-specified constant. For $\Delta >0$ we call the alternative in  \eqref{hrel} {\it relevant deviation}
from white noise and note  that the case $\Delta=0$  in \eqref{hrel}
corresponds  to the "classical" hypothesis \eqref{h0}. 
The alternative in \eqref{hequiv} is called {\it similarity} to white noise and of particular importance.
Hypotheses of the type \eqref{hequiv} are useful
if one wants to control the type one error when one works under the assumption of 
a functional white noise error process.  The choice of the threshold $\Delta$ depends on the particular application, but we argue that from a practical point of view it might be very reasonable to think about this choice more carefully and to define the size of deviation in which one is really interested. Precise hypotheses of the form \eqref{hrel} and \eqref{hequiv}  have  nowadays been considered  in various fields of statistical inference including medical, pharmaceutical, chemistry or environmental statistics [see \cite{chowliu1992} or  \cite{mcbride1999} among others].  

In contrast to other methods, the approach motivated by \autoref{thm2} can be easily used to construct a test for hypotheses of this type.
For  the sake of brevity 
we restrict ourselves to the hypothesis  \ref{hequiv} of {\it similarity} to white noise. Then it is easy to see that an 
 asymptotic  level $\alpha$  test for the hypothesis (\ref{hequiv}) is obtained by rejecting
the null hypothesis, whenever
\begin{eqnarray}  \label{prectest}
\widehat{M^2_T} - \Delta  < \frac{ \hat v_{H_1} }{\sqrt{T}} u_{\alpha} \: ,
\end{eqnarray}
where $ \hat v_{H_1}^2$ denotes an estimator of the variance  in \eqref{v2h1} and $u_\alpha$  is 
the $({1-\alpha})$-quantile of the standard normal distribution. 
Note that this procedure allows for accepting the null hypothesis of an  ``approximate'' white noise
at controlled type I error.

\item In a a similar way an application of \autoref{thm2} shows that 
the interval 
$$
\Big[ \max \Big\{ 0, \widehat{M^2_T} - \frac{\hat v_{H_1}}{\sqrt{T}} u_{1-\alpha/2}
\Big\}, \widehat{M^2_T}  + \frac{\hat v_{H_1}}{\sqrt{T}}u_{1-\alpha/2}
\Big]
$$
is an asymptotic confidence interval for the deviation $M^2$ from a white noise
functional process.
\end{enumerate}
\end{remark}

\section{Finite sample properties}\label{sec4} 
\def\theequation{4.\arabic{equation}}
\setcounter{equation}{0}

In this section, we investigate the finite sample performance of 
the  method proposed in this paper by means of a simulation study. We have calculated the rejection probabilities of the test  \eqref{testwn} for the sample sizes $T = 128,$ $256,$ $512$ and $1024$, where  the number of Monte Carlo replications is always $1000$. 
For the sake of a comparison our simulation setup is similar to that of \cite{zhang2016} who proposed an alternative test for a functional white noise process.

Under the null hypothesis, we simulate i.i.d. data from a standard Brownian motion, Brownian bridge and data from the FARCH(1) process defined as,
$$X_t(\tau) = \epsilon_t(\tau)\sqrt{\tau+\int_0^1c_{\psi}\exp\left(\frac{\tau^2 + \sigma^2}{2}\right)X_{t-1}^2(\sigma)d\sigma},\hspace{0.3 in} t=1,2,\dots, ~~\tau \in [0,1],$$
where $\{\epsilon_t\}_{t\in \mathbb{Z}}$ is a sequence of i.i.d.\ standard Brownian motions and $c_{\psi} = 0.3418$. As explained in \cite{zhang2016} 
observations from the FARCH(1) process are  uncorrelated but dependent. Therefore most of the white noise testing methods (especially non spectra-based procedures) have a large type I error under this setup. The data are generated on a grid of $1000$ equispaced points in $[0,1]$. The kernels $S_{T2}$ and $S_{T1}$ are computed at the $1000 \times 1000$ equispaced grid points on $[0,1]^2$.  The integrals of the kernels are estimated by averaging of the  function values on the grid points. The 
asymptotic variance of the test statistic under the null hypothesis is estimated by \eqref{var_H0}. 

The corresponding results are presented in Table \ref{Sim_size}. 
We observe a very good approximation of the nominal level in all cases under considerations (even for the sample size $T=128$). For the sake of comparison we also display in Table \ref{Sim_size} the simulated level of the bootstrap test proposed in \cite{zhang2016} (numbers in brackets). 
This author used a block bootstrap procedure to generate critical values, which requires  the specification of the block length as a regularization parameter. This parameter was chosen by the minimum volatility index method as described in Section 2.2 (page 79) of \cite{zhang2016}.
For this choice we also  observe a very good approximation of the nominal level in all cases under consideration.

\begin{table}[t]
\begin{center}
\caption{\it Empirical rejection probabilities  (in percentage) of the test \eqref{testwn} under the null hypothesis. The numbers in brackets give the corresponding results of the test of \cite{zhang2016}.}
\label{Sim_size}
\bigskip
\begin{tabular}{|c|c|c|c|c|c|c|c|c|c|}
\hline
& \multicolumn{3}{|c|} {Brownian Motion} & \multicolumn{3}{|c|} {Brownian Bridge} & \multicolumn{3}{|c|} {FARCH(1)}\\
\hline
 T & 10\% & 5\% & 1\%  & 10\% & 5\% & 1\%  & 10\% & 5\% & 1\% \\
\hline
\multirow{2}{*}{128} & 9.5 & 4.8 & 1.1 & 10.8 & 5.3&  0.8 & 11.1 & 5.7 & 0.8\\
& (11.0) & (4.2) & (0.8) & (11.0) & (5.4) & (1.1) & (10.7) & (5.9 ) & (0.9)\\
\hline
\multirow{2}{*}{256} & 9.6 & 5.1 & 1.3 & 10.3 & 5.4 & 0.9 & 10.9 & 5.5 & 0.7\\
& (10.0) & (4.2 ) & (0.9 ) & (9.5) & (4.8) & (0.7) & (11.1) & (5.2) & (0.9)\\ 
\hline
\multirow{2}{*}{512} & 10.1 & 5.1 & 0.8 & 9.7 & 5.1 & 1.0 & 10.9 & 5.3 & 0.8\\
& (9.9) & (4.7) & (0.6) & (10.3) & (5.9) & (1.3) & (11.1) & (4.9) & (0.7) \\
\hline
\multirow{2}{*}{1024} & 9.8 & 4.9 & 0.9 & 9.9 & 5.2 & 0.8 & 10.5 & 5.2 & 0.7\\
& (10.0)& (4.9) & (0.8) & (9.9) & (5.1) & (1.1) & (9.8) & (4.8) & (1.2)\\
\hline
\end{tabular}
\end{center}
\end{table}

\begin{table}
\begin{center}
\caption{\it Empirical rejection probabilities  (in percentage) of the test \eqref{testwn}  under the alternative. The model is a {\rm FAR(1)} model with i.i.d. innovations $\epsilon_t$  from a Brwonian motion or Brwonian bridge and two different integral opertors are considered.
The numbers in brackets give  the corresponding results of the test of \cite{zhang2016}.}
\label{Sim_power}
\medskip
\begin{tabular}{|c|c|c|c|c|c|c|}
\hline
$\epsilon_t$ & \multicolumn{6}{|c|} {Brownian motion} \\
\hline
$\mathcal{K} $& \multicolumn{3}{|c|} {\eqref{gausskern}} & \multicolumn{3}{|c|} 
{\eqref{wienkern}}\\
\hline
\hline
 T & 10\% & 5\% & 1\%  & 10\% & 5\% & 1\% \\
\hline
\multirow{2}{*}{128} & 82.6 & 80.7 & 65.9  & 87.6 & 82.4 & 66.9 \\
& (86.1) & (83.7) & (58.5) & (89.9) & (83.1) & (59.7)   \\
\hline
\multirow{2}{*}{256} & 99.0 & 98.2 & 98.2 & 99.4& 98.3 & 94.2 \\
& (99.6) & (99.2) & (99.0) & (99.9) & (99.5) & (98.6) \\
\hline
\multirow{2}{*}{512} & 99.8 & 99.6 & 99.6  & 99.9 & 99.9 & 99.6 \\
& (99.7) & (99.5) & (99.0) & (99.9) & (99.8) & (99.1) \\
\hline
\multirow{2}{*}{1024} & 100.0 & 99.9 & 99.7 & 100.0 & 100.0 & 99.8 \\
& (100.0 ) & (100.0) & (99.8 )& (100.0) & (99.8) & (99.5) \\
\hline
\hline
$\epsilon_t$ & \multicolumn{6}{|c|} {Brownian bridge} \\
\hline
$\mathcal{K} $& \multicolumn{3}{|c|} {\eqref{gausskern}} & \multicolumn{3}{|c|} 
{\eqref{wienkern}}\\
\hline
\hline
 T & 10\% & 5\% & 1\%  & 10\% & 5\% & 1\%  \\
\hline
\multirow{2}{*}{128}  & 80.1 & 77.4 & 60.1 &  87.6 & 79.9 & 61.2\\
&  (79.2) & (68.3) & (54.4) & (80.2) & (65.8) & (58.1)  \\
\hline
\multirow{2}{*}{256} &  100.0 & 97.0 & 95.5 & 99.9 & 98.3 & 98.1\\
& (100.0) & (98.2) & (97.2) & (100.0) & (99.1) & (98.8)\\
\hline
\multirow{2}{*}{512}  & 100.0 & 99.3 & 99.3 &  100.0 & 100.0  & 98.8\\
 & (100.0 ) & (98.7) & (98.1) & (100.0) & (100.0) & (99.1)\\
\hline
\multirow{2}{*}{1024} & 100.0 & 100.0 & 100.0 & 100.0 & 100.0 & 100.0\\
 & (100.0) & (100.0) & (99.4) & (100.0) & (100.0) & (100.0) \\
\hline
\end{tabular}
\end{center}
\end{table}

Under the alternatives, we simulate data from the  FAR(1) model 
\begin{equation}
\label{FAR}
X_t(\tau) - \mu(\tau) = \rho(X_{t-1}-\mu)(\tau) + \epsilon_t(\tau), \hspace{0.1 in} t=1,2,\dots
\end{equation}
where $ \rho$ denotes an integral operator acting on $L^2[0,1]$ defined by 
\begin{equation}
\rho(x)(\tau) = \int_0^1\mathcal{K}(\tau,\sigma)x(\sigma)d\sigma
~,~~x \in L^2[0,1], \label{intop}
\end{equation}
for some kernel $\mathcal{K} \in L^2([0,1]^2)$, and $\{\epsilon_t(\tau)\}_{t\in \mathbb{Z}}$ is a sequence of i.i.d. mean zero innovations in $L^2[0,1]$. For our simulations we use four different FAR(1) models where the innovations are either Brownian motions or Brownian bridges and the kernel 
in the integral operator \eqref{intop} is either the Gaussian kernel  
\begin{align} \label{gausskern}
\mathcal{K}_g(\tau,\sigma) = c_g\exp\left((\tau^2+\sigma^2)/2\right)
\end{align}
or the Wiener kernel 
\begin{align} \label{wienkern}
\mathcal{K}_{w}(\tau,\sigma) = c_w\min(\tau,\sigma),
\end{align}
where the constants  $c_g$ and $c_w$ were chosen such that the corresponding Hilbert-Schmidt norm is equal $0.3$. The corresponding results  of the new test are presented in Table \ref{Sim_power}. We observe very good rejection probabilities in all considered models. A comparison  with the procedure of \cite{zhang2016}
shows that the power of both tests is very similar. Only for small sample sizes we observe small differences between both procedures. While the test  of  \cite{zhang2016} has slightly larger power 
for a FAR(1) model with  a Gaussian kernel and  i.i.d innovations
(in most  cases), 
the new test proposed in this paper usually yields better results for 
a FAR(1) model with  a Wiener  kernel and  i.i.d innovations. 

Our numerical study can be summarized as follows. The new test proposed in this paper   exhibits similar properties as the block bootstrap test suggested in \cite{zhang2016}. The latter approach uses resampling, which is computationally expensive for functional data. Moreover, it 
requires  the specification of the length of the blocks for the bootstrap, and the results may depend on this regularization. In contrast the 
new test does  not need a regularization parameter and  critical values can be directly obtained from the table of the standard normal distribution.
Moreover, the method can be easily extended to test precise hypotheses of the form \eqref{hrel}
or  \eqref{hequiv} and our results can be used to provide confidence intervals for a measure of deviation from white noise.

\section{Conclusions} 
In this paper a  new and simple test  is proposed for the hypothesis that a  functional time series is a  white noise process. The test is based on an
estimate of the  minimal  $L^2$-distance between the spectral density operator and its best  ($L^2$-)approximation by a spectral density operator 
corresponding to a white noise process. The minimal distance can be interpreted as a measure for the deviation from a white noise process (which vanishes
in case of white noise) and estimated by  sums of periodogram kernels.  

Asymptotic normality of a standardized version of this estimator is established under the null hypothesis of a white noise process and under the alternative, such
that  the quantiles of the normal distribution can be used for testing and  power analysis. In particular the test does not require the estimation of a long run variance.
The results are also applicable for the construction of confidence intervals for the measure of deviation from a white noise process and for the construction of tests for precise hypotheses such as the hypothesis of a relevant deviation from white noise.
 
An important problem for future research is to investigate if the methodology can be extended such it can be applied to the residuals from a parametric functional time series model in order to construct goodness-of-fit tests. The method of directly estimating a minimal distance between an object and its best approximation under the null hypothesis is also applicable for other 
testing problems such as testing the independence of two functional time series. A very challenging question in this context is to investigate if our  approach can be used to develop a test for the stationarity of a functional time series and research in this direction is currently underway.

\bigskip
\bigskip

\noindent 	
{\bf Acknowledgments.}
This work has been supported in part by the Collaborative Research Center ``Statistical modelling of nonlinear
dynamic processes'' (SFB 823, Teilprojekt  C1, C3) of the German Research Foundation (DFG). The authors would like to thank two anonymous referees and the Associate Editor for their constructive comments on an earlier version of this paper.

 \bibliography{References}

\newpage
\section{Proof of \autoref{thm2}} \label{sec5} 
\def\theequation{5.\arabic{equation}}
\setcounter{equation}{0}
\medskip
\noindent
We prove that the random elements $\{I_T\}=\{I_T\}_{T\ge1}$ with values in  $L^1([0,1]^2,\mathbb C)$ defined by
\[
	I_T(\tau,\sigma)=\sqrt T\biggl[2\pi[S_{T,2}(\tau,\sigma)-S_{T,1}(\tau,\sigma)\overline {S_{T,1}(\tau,\sigma)}]
    -\int_{-\pi}^{\pi}\vert f_{\omega}(\tau,\sigma)\vert^2d\omega
    +\frac{1}{2\pi}\biggl|\int_{-\pi}^{\pi}f_{\omega}(\tau,\sigma)d\omega\biggr|^2\biggr]
\]
for each $(\tau,\sigma)\in[0,1]^2$ converge in distribution to a zero mean Gaussian random element $\mathcal G$  with values in $L^1([0,1]^2,\mathbb R)$ and the covariance kernel $\nu^2$ given by formula \eqref{cov_ker_IT} in Proposition \ref{ITcov} of the following section, where $S_{T_1}$ and $S_{T,2}$ are defined by~\eqref{eq:ST1ST2}. The proof is based on Theorem~2 of \cite{cremers1986}, which states that $I_T$ converges in distribution to $\mathcal G$ as $T\to\infty$ provided that the following three conditions are fulfilled:

\begin{enumerate}[(I)]
\item the finite dimensional distributions (fdds) of $I_T$ converge to the fdds of $\mathcal G$ a.e.\ as $T\to\infty$;
\item\label{cond:dom} there exists an integrable function $f:[0,1]^2\to[0,\infty)$ such that 
\[
	\E|I_T(\tau,\sigma)|\le f(\tau,\sigma)
\]
for each $T\ge1$ and $(\tau,\sigma)\in[0,1]^2$;
\item for each $(\tau,\sigma)\in[0,1]^2$,
\[
	\E|I_T(\tau,\sigma)|\to\E|\mathcal G(\tau,\sigma)|\quad\text{as}\quad T\to\infty.
\]
\end{enumerate}
We establish sufficient conditions for the convergence of the fdds in \autoref{subsec:fdds}. A sufficient condition for the existence of a non-negative integrable function that satisfies condition \eqref{cond:dom} is established in \autoref{subsec:dom}. Finally, the required convergence of moments is established using the fact that if $I_T(\tau,\sigma)$ converges in distribution to $\mathcal G(\tau,\sigma)$ as $T\to\infty$ and $\sup_{T\ge1}\E|I_T(\tau,\sigma)|^2<\infty$, then $\E|I_T(\tau,\sigma)|<\infty$ and $\E|I_T(\tau,\sigma)|\to\E|\mathcal G(\tau,\sigma)|$ as $T\to\infty$ for each $(\tau,\sigma)\in[0,1]^2$ (see Theorem 25.12 and its Corollary on p.\ 338 of \cite{billingsley1995}). We have that $\sup_{T\ge1}\E|I_T(\tau,\sigma)|^2<\infty$ since $\E|I_T(\tau,\sigma)|^2 = \cm_2(I_T(\tau,\sigma))= O(1)$ as $T\to\infty$ (by (I)).

We write
$$\sqrt{T}(\widehat{M^2} - M_0^2) = 2\pi\int_0^1\int_0^1I_T (\tau,\sigma)d\tau d\sigma.$$

As  the map $I:L^2([0,1]^2,\mathbb{C}) \to \mathbb{C}$ defined by  $I(f) = \int_0^1\int_0^1f(\tau,\sigma)d\tau d\sigma$ is continuous,
an application of  the  continuous mapping  Theorem gives
$$\sqrt{T}(\widehat{M^2} - M_0^2) \stackrel{d}{\to} 2\pi\int_0^1 \int_0^1 \mathcal{G}(\tau,\sigma)d\tau d\sigma.$$

Therefore we finally have   
$$\sqrt{T}(\widehat{M^2} - M_0^2) \stackrel{d}{\to}N\left(0,4\pi^2\int_{0}^1\int_{0}^1\int_{0}^1\int_{0}^1\nu^2((\tau_1,\sigma_1)(\tau_2,\sigma_2))d\tau_1d\sigma_1d\tau_2d\sigma_2\right),$$
where the kernel $\nu^2$ is defined in Proposition \ref{ITcov} of the following section.
The asymptotic variance in Theorem \ref{thm2} is now obtained by a straightforward calculation of the integral observing the representation \eqref{cov_ker_IT}.
Under $H_0$ the spectral densities $f_{\omega}$ and  $f_{\omega_1,-\omega_1,\omega_2}$ are free of $\omega$ and $\omega_1, \omega_2$ respectively. 
Therefore under $H_0$ the limiting variance simplifies to:
\begin{align}
\nu^2_{H_0} = &8\pi^2 \int_0^1\int_0^1\int_0^1\int_0^1\vert f_0(\tau_1,\sigma_2)f_0(\tau_2,\sigma_1)\vert^2 d\tau_1 d\tau_2 d\sigma_1 d\sigma_2\nonumber\\
= &  8\pi^2\left(\int_{[0,1]^2}\big\vert f_0(\tau,\sigma)\big\vert^2d\tau d\sigma \right)^2.\label{eq:var_H0}
\end{align}

\subsection{The convergence of the finite-dimensional distributions}\label{subsec:fdds}
To establish the convergence of the fdds, we need to show that
\[
	\left(\begin{array}{ccc}I_T(\tau_1,\sigma_1), & \ldots & ,I_T(\tau_d,\sigma_d)\end{array}\right)^{\mathrm T}\xrightarrow{d}\left(\begin{array}{ccc}\mathcal G(\tau_1,\sigma_1) ,& \ldots & , \mathcal G(\tau_d,\sigma_d)\end{array}\right)^{\mathrm T}
\]
as $T\to\infty$ for each $d\ge1$ and $(\tau_1,\sigma_1),\ldots,(\tau_d,\sigma_d)\in S$ where $S \subset [0,1]^{2d}$ and $S$ has Lebesgue measure $1$. In order to do that we restrict our attention to the vector 
\begin{align} \label{itilde} 
\wtilde{I}_T(\tau_1,\dots,\tau_d,\sigma_1,\dots,\sigma_d) := 
\sqrt{T}\left(\begin{array}{c}
S_{T,1}(\tau_1,{\sigma}_1) - \E\left(S_{T,1}({\tau}_1,{\sigma}_1)\right)\\
\vdots\\
S_{T,1}(\tau_d,{\sigma}_d) - \E\left(S_{T,1}({\tau}_d,{\sigma}_d)\right)\\
S_{T,2}(\tau_1,{\sigma}_1) - \E\left(S_{T,2}({\tau}_1,{\sigma}_1)\right)\\
\vdots\\
S_{T,2}(\tau_d,{\sigma}_d) - \E\left(S_{T,2}({\tau}_d,{\sigma}_d)\right)
\end{array}\right).
\end{align}
First we show that the aforementioned vector converges in distribution to some $N(0,\Sigma)$ random vector and then use the delta method to obtain the desired result. Here we only deal with the case $d=1$, as the general case can be established similarly with an additional amount of notation.
In order to show that the limit distribution of $\tilde{I}_T$ converges to multivariate normal, we use the Cram\' er-Wold device and show for any vector $c \in \R^{2}$, the random variable $c' \wtilde{I}_T({\tau},{\sigma})$ converges in distribution to $N(0,c'\Sigma c)$ variable. 
\\
For this purpose we prove  that the cumulants of $c'\wtilde{I}_T(\tau,\sigma)$ converge to the cumulants of a normal distribution. The first cumulant is trivially zero. Using the fact that cumulants of order $l$ are invariant under centering for $l \geq 2$, Proposition \ref{prop:cum} shows the convergence of higher order cumulants of $c'\wtilde{I}_T(\tau,\sigma)$ to the cumulants of a normal distribution.

\begin{prop}
\label{prop:cum}
Under assumption \eqref{cond:mixing} of \autoref{thm2} we have,
\begin{align*}
\cm_l\left(c_1\sqrt{T}S_{T,2}(\tau,\sigma)+c_2\sqrt{T}S_{T,1}(\tau,\sigma)\right) & = O(1) \hspace{0.1 in} \rm{for }\hspace{0.1 in} l=2,\\
&=o(1) \hspace{0.1 in} \rm{for }\hspace{0.1 in}  l>2,
\end{align*}
for any $c_1, c_2 \in \R$ and a.e. $(\tau,\sigma) \in [0,1]^2$.
\end{prop}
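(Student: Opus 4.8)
The plan is to prove \autoref{prop:cum} by the classical cumulant method of Brillinger for functionals of the periodogram, in the functional form used in \cite{Panaretos_Tavakoli_2013}. Write
\[
	Y_T:=c_1\sqrt T\,S_{T,2}(\tau,\sigma)+c_2\sqrt T\,S_{T,1}(\tau,\sigma).
\]
Since $\widetilde X^{(T)}_{-\o}(\sigma)=\overline{\widetilde X^{(T)}_{\o}(\sigma)}$ for real-valued data, each summand of $\sqrt T\,S_{T,1}$ has the form $T^{-1/2}\widetilde X^{(T)}_{\pm\o_k}(\tau)\widetilde X^{(T)}_{\mp\o_k}(\sigma)$ and each summand of $\sqrt T\,S_{T,2}$ the form $2T^{-1/2}\widetilde X^{(T)}_{\o_k}(\tau)\widetilde X^{(T)}_{-\o_k}(\sigma)\widetilde X^{(T)}_{-\o_{k-1}}(\tau)\widetilde X^{(T)}_{\o_{k-1}}(\sigma)$. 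Hence $Y_T$ is a fixed finite linear combination of elementary statistics $Z^{(T)}_\iota=T^{-1/2}\sum_{k}\prod_{a=1}^{m_\iota}\widetilde X^{(T)}_{\alpha_{\iota,a}(k)}(\mu_{\iota,a})$, with $\alpha_{\iota,a}(k)\in\{\pm\o_k,\pm\o_{k-1}\}$, $\mu_{\iota,a}\in\{\tau,\sigma\}$ and $m_\iota\in\{2,4\}$, and by multilinearity of joint cumulants $\cm_l(Y_T)$ is a finite linear combination of cumulants $\cm(Z^{(T)}_{\iota_1},\dots,Z^{(T)}_{\iota_l})$.

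Pulling out the $l$ frequency sums and applying the product theorem for cumulants (the Leonov--Shiryaev formula), each $\cm(Z^{(T)}_{\iota_1},\dots,Z^{(T)}_{\iota_l})$ equals
\[
	T^{-l/2}\sum_{k_1,\dots,k_l}\ \sum_{\nu}\ \prod_{B\in\nu}\cm\Big(\big(\widetilde X^{(T)}_{\alpha_a(k)}(\mu_a)\big)_{a\in B}\Big),
\]
where $\nu$ runs over the \emph{indecomposable} partitions of the array whose $j$-th row lists the $m_{\iota_j}$ fDFT factors of $Z^{(T)}_{\iota_j}$. Since $\E\langle f,X_0\rangle=\langle f,\mu\rangle$, a block of size one contributes a factor proportional to $\sum_{t=0}^{T-1}\exp(-i\alpha_a(k)t)$, which vanishes because $\o_k$ and $\o_{k-1}$ are non-zero Fourier frequencies; so only partitions all of whose blocks have size $\ge2$ survive, and such a $\nu$ has at most $2l$ blocks.

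For a block $B$ of size $m\ge2$, expanding each factor via \eqref{eq:fdft} and using multilinearity, stationarity of the cumulant kernels and the definition \eqref{eq:cum_spectra}, one obtains, as in the covariance estimates of \cite{Panaretos_Tavakoli_2013},
\[
	\Big|\cm\big(\widetilde X^{(T)}_{\beta_1}(\mu_1),\dots,\widetilde X^{(T)}_{\beta_m}(\mu_m)\big)\Big|\ \le\ C\,T^{-m/2}\,\big|H_T(\beta_1+\dots+\beta_m)\big|,
\]
with $H_T$ the $2\pi$-periodic Dirichlet kernel satisfying $|H_T(\lambda)|\le\min\{T,\,\pi/\|\lambda\|\}$, where $\|\lambda\|$ is the distance of $\lambda$ to $2\pi\Z$, and $C$ depends only on suprema of the cumulant spectral densities up to order $4l$, which are finite because assumption~\eqref{cond:mixing} of \autoref{thm2} implies condition~(B). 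The weight $1+|t_j|$ in assumption~\eqref{cond:mixing} moreover yields Lipschitz continuity of the cumulant spectral densities, which is what makes the remainder in approximating the above cumulant by the corresponding lower-order spectral density negligible uniformly over all frequencies — in particular uniformly over the $O(1/T)$ shifts between $\o_k$ and $\o_{k-1}$ produced by $S_{T,2}$, and in the attendant Riemann-sum approximations.

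It remains to count powers of $T$. Fix an indecomposable $\nu$ with all blocks of size $\ge2$ and write $\Lambda_B$ for the integer linear form in $k_1,\dots,k_l$ given by the sum of the Fourier frequencies attached to $B$; then
\[
	\Big|T^{-l/2}\sum_{k_1,\dots,k_l}\prod_{B\in\nu}\cm\big((\widetilde X^{(T)}_{\cdot})_{a\in B}\big)\Big|\ \le\ C\,T^{-l/2-\frac12\sum_{B}|B|}\sum_{k_1,\dots,k_l}\prod_{B\in\nu}\big|H_T(\Lambda_B(k))\big|.
\]
Summing the indices one at a time, using $|H_T|\le T$ for at most one ``resonant'' factor at each stage and the uniform estimate $\sum_{k}|H_T(\o_k+a)|=O(T\log T)$ for the rest, and using that indecomposability links all $l$ rows — so that the would-be divergent, \emph{decomposable} configurations never occur — a standard induction on $l$ bounds the right-hand side by $O(T^{1-l/2}(\log T)^{c_l})$ for a constant $c_l$. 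Hence $\cm_l(Y_T)=O(T^{1-l/2}(\log T)^{c_l})$, which is $O(1)$ for $l=2$ and $o(1)$ for $l\ge3$, for almost every $(\tau,\sigma)$. The main obstacle is precisely this bookkeeping: separating the resonant factors $H_T(\Lambda_B)\approx T$ from the off-resonant ones (which behave like $\|\Lambda_B\|^{-1}$ and cost only logarithmic factors when summed over an index), tracking which indices are pinned by the resonances, and verifying that the $\pm2\pi/T$ offsets coming from $\o_{k-1}$ in $S_{T,2}$ do not create spurious resonances; the gap $l-2\ge1$ for $l\ge3$ then supplies the required decay.
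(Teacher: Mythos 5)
Your reduction follows the paper's skeleton — multilinearity of cumulants, the Leonov--Shiryaev product formula over indecomposable partitions of the two-row/four-row array, and the fDFT cumulant asymptotics of Panaretos--Tavakoli (their Theorem B.2) — but the step that actually carries the proof is flawed and then asserted rather than proved. The master inequality $\vert\cm(\widetilde X^{(T)}_{\beta_1}(\mu_1),\dots,\widetilde X^{(T)}_{\beta_m}(\mu_m))\vert\le C\,T^{-m/2}\vert H_T(\beta_1+\dots+\beta_m)\vert$ is not correct: the correct statement has an additive remainder $O(T^{-m/2})$, uniform in the frequencies. Since every frequency in your sums is an exact Fourier frequency, $H_T(\Lambda_B)$ is exactly $T$ when the block is resonant and exactly $0$ otherwise, so under your bound every non-resonant block contributes nothing. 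In reality the non-resonant blocks contribute precisely the $O(T^{-m/2})$ remainder, and these are the delicate terms: a partition with few resonant blocks loses factors of $T$ per block but gains free summation indices because no linear constraints are imposed. Controlling this trade-off — how many indices remain free given which blocks are resonant, and how indecomposability limits that count — is the actual content of the paper's argument (the equivalence-class decomposition of rows into blocks $B_{ij}$, the claim bounding $\vert\cup\nu_{m_k}\vert\ge i+2(t-1)$, the bound on $r+p-\Vert s(\nu)\Vert$ in Cases I and II, and the induction over the number of $S_{T,1}$-rows). Your proposal replaces all of this with ``a standard induction on $l$'' using ``at most one resonant factor at each stage'' and $\sum_k\vert H_T(\omega_k+a)\vert=O(T\log T)$, which is exactly the bookkeeping you concede is the main obstacle; it is not supplied, and it is not standard in the sense that it follows from the stated ingredients.

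Even taken at face value, your final bound $\cm_l(Y_T)=O(T^{1-l/2}(\log T)^{c_l})$ does not prove the proposition for $l=2$: unless $c_2=0$ (neither claimed nor argued), $O((\log T)^{c_2})$ is weaker than the required $O(1)$, which is what feeds the identification of the limiting variance. (At Fourier frequencies no logarithms should appear at all — the resonant sum is at most $T$, not $T\log T$ — another sign that the continuous-frequency Dirichlet estimates are the wrong tool here; the exact Kronecker-delta structure of $\Delta^{(T)}$, as exploited in the paper, is what makes the counting clean.) Minor points that are fine: the vanishing of singleton blocks at non-zero Fourier frequencies, and the remark that assumption (ii) gives uniform control over the $O(1/T)$ offsets between $\omega_k$ and $\omega_{k-1}$; but neither repairs the missing combinatorial core.
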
 

\begin{proof} 
Introduce the notation $\cm_{n_1,n_2}(X,Y) = \cm(\underbrace{X,\dots,X}_{n_1 },\underbrace{Y,\dots,Y}_{n_2})$. Using the linearity of cumulants as in Theorem 2.3.1 from \cite{Brillinger_2001} we write
$$
\cm_l\left(c_1\sqrt{T}S_{T,1}(\tau,\sigma)+c_2\sqrt{T}S_{T,2}(\tau,\sigma)\right)
= \sum_{n=0}^l c_1^{n}c_2^{l-n}\cm_{n,l-n}\left(\sqrt{T}S_{T,1}(\tau,\sigma),\sqrt{T}S_{T,2}(\tau,\sigma)\right).
$$
We will show that $\cm_{n,l-n}\left(\sqrt{T}S_{T,1}(\tau,\sigma),\sqrt{T}S_{T,2}(\tau,\sigma)\right)$ is bounded for $l=2$ and converges to $0$ for $l > 2$ for $n=0, \dots, l$. First we will show it for $n=0$ and then use induction on $n$, i.e., assuming the result is true for $n=t-1$, We will show the order of $\cm_{n,l-n}\left(\sqrt{T}S_{T,1}(\tau,\sigma),\sqrt{T}S_{T,2}(\tau,\sigma)\right)$ remains same for $n= t$.\\ 

Using linearity again, we obtain

\begin{align*}
&\cm_{n,l-n}\left(\sqrt{T}S_{T,1}(\tau,\sigma),\sqrt{T}S_{T,2}(\tau,\sigma)\right)\\
= &\cm_{n,l-n}\left(\frac{1}{\sqrt{T}}\sum_{k=1}^{\lfloor T/2\rfloor}(p_{\omega_k}(\tau,\sigma) + p_{\omega_k}(\sigma,\tau)),\frac{2}{\sqrt{T}} \sum_{k=1}^{\lfloor T/2\rfloor} p_{\omega_k}^{(T)}(\tau,\sigma)p_{\omega_{k-1}}^{(T)}(\sigma,\tau)\right)\\
= &\cm_{n,l-n}\left(\frac{1}{\sqrt{T}}\sum_{k=1}^{\lfloor T/2\rfloor}p_{\omega_k}(\tau,\sigma) ,\frac{2}{\sqrt{T}} \sum_{k=1}^{\lfloor T/2\rfloor} p_{\omega_k}^{(T)}(\tau,\sigma)p_{\omega_{k-1}}^{(T)}(\sigma,\tau)\right)\nonumber\\
&+\cm_{n,l-n}\left(\frac{1}{\sqrt{T}}\sum_{k=1}^{\lfloor T/2\rfloor}p_{\omega_k}(\sigma,\tau) ,\frac{2}{\sqrt{T}} \sum_{k=1}^{\lfloor T/2\rfloor} p_{\omega_k}^{(T)}(\tau,\sigma)p_{\omega_{k-1}}^{(T)}(\sigma,\tau)\right)\nonumber\\
= &C_{n1} + C_{n2}
\end{align*}
We will argue only for the first term, the second term can be handled similarly.
\begin{align*}
C_{n1} = &\cm_{n,l-n}\left(\frac{1}{\sqrt{T}}\sum_{k=1}^{\lfloor T/2\rfloor}p_{\omega_k}(\tau,\sigma) ,\frac{2}{\sqrt{T}} \sum_{k=1}^{\lfloor T/2\rfloor} p_{\omega_k}^{(T)}(\tau,\sigma)p_{\omega_{k-1}}^{(T)}(\sigma,\tau)\right)\nonumber\\
= &\frac{2^{l-n}}{T^{l/2}}\sum_{k_1,\ldots,k_l=1}^{\lfloor T/2\rfloor}\cm\left(p_{\omega_{k_1}}^{(T)}(\tau,\sigma),\dots,p_{\omega_{k_n}}^{(T)}(\tau,\sigma), p_{\omega_{k_{n+1}}}^{(T)}(\tau,\sigma)p_{\omega_{k_{n+1}-1}}^{(T)}(\sigma,\tau),\dots, p_{\omega_{k_l}}^{(T)}(\tau,\sigma)p_{\omega_{k_l-1}}^{(T)}(\sigma,\tau)\right)\\
= &\frac{2^{l-n}}{T^{l/2}}\sum_{k_1,\ldots,k_l=1}^{\lfloor T/2\rfloor} \cm(Z_{k_1,1}Z_{k_1,2}, \dots, Z_{k_n,1}Z_{k_n,2}, Z_{k_{n+1},1}Z_{k_{n+1},2}Z_{k_{n+1},3}Z_{k_{n+1},4}\dots Z_{k_l,1}Z_{k_l,2}Z_{k_l,3}Z_{k_l,4})
\end{align*}
where
$Z_{i1}:= \wtilde{X}_{\omega_i}^{(T)}(\tau), \ 
Z_{i2}:=  \wtilde{X}_{-\omega_i}^{(T)}(\sigma), \ 
Z_{i3}:=  \wtilde{X}_{\omega_{i-1}}^{(T)}(\sigma) \ \text{and} \
Z_{i4}:=  \wtilde{X}_{-\omega_{i-1}}^{(T)}(\tau)$.
Now using Theorem 2.3.2 from \cite{Brillinger_2001} we write

\begin{equation}
\label{cum_partt}
C_{n1}
= \frac{2^{l-n}}{T^{l/2}}\sum_{k_1=1}^{\lfloor T/2\rfloor} \dots\sum_{k_l=1}^{\lfloor T/2\rfloor}\sum_{\nu}\cm\left(Z_{ij} : ij \in \nu_1\right) \dots \cm\left(Z_{ij} : ij \in \nu_p\right) = \sum_{\nu} C_n(\nu)
\end{equation}
with
$$C_{n}(\nu) = \frac{2^{l-n}}{T^{l/2}}\sum_{k_1=1}^{\lfloor T/2\rfloor} \dots\sum_{k_l=1}^{\lfloor T/2\rfloor}\cm\left(Z_{ij} : ij \in \nu_1\right) \dots \cm\left(Z_{ij} : ij \in \nu_p\right)$$

for all indecomposable partitions $\nu = \nu_1 \cup \nu_2 \cup \dots \cup \nu_p$ of the table
\begin{equation}
\label{table_indext} T_n = 
\left\{
\begin{array}{cccc}
(k_1,1) & (k_1,2) &  &  \\
\vdots &\vdots & &\\
(k_n,1) & (k_n,2) & &\\
(k_{n+1},1) & (k_{n+1},2) & (k_{n+1},3) & (k_{n+1},4) \\
\vdots &\vdots &\vdots &\vdots \\
(k_l,1) & (k_l,2) & (k_l,3) & (k_l,4) 
\end{array}
\right.
\end{equation}

As there are finitely many partitions, we will show that $C_n(\nu)$ is of right order for all indecomposable partition of $T_n$ and $n=0, 1, \dots, l$.
To this end, we claim that $C_0(\nu) = O(1)$ for $l=2$ and $o(1)$ for $l >2$ for all indecomposable partitions $\nu$ of $T_0$. The proof of this claim is presented in Section \ref{subsec:prop_cm}.

Now suppose that we have proved $C_n(\nu)$ has the right order ($O(1)$ for $l=2$ and $o(1)$ for $l>2$) for $n=0, \dots, t-1$. Let $\nu = \nu_1 \cup \dots \cup \nu_p$ be an indecomposable partition of table $T_t$ and $\nu_{p+1} = \{(k_t,3),(k_t,4)\}$. Then $\nu' = \nu_1 \cup \dots \cup \nu_p \cup \nu_{p+1}$ is an indecomposable partition of $T_{t-1}$. Using Theorem B.2 from \cite{Panaretos_Tavakoli_2013_Supp} we get $\cm(Z_{ij}:ij \in \nu_{p+1}) = O(1)$ and
\begin{align*}
C_{t-1}(\nu') = &\frac{2^{l-t+1}}{T^{l/2}}\sum_{k_1=1}^{\lfloor T/2\rfloor} \dots\sum_{k_l=1}^{\lfloor T/2\rfloor}\cm\left(Z_{ij} : ij \in \nu_1\right) \dots \cm\left(Z_{ij} : ij \in \nu_p\right)O(1)\\ 
=&2C_{t}(\nu) \times O(1).
\end{align*}
Therefore $C_t(\nu)$ is of the right order and hence the result is true.
\end{proof}

Note that $(I_T(\tau_1,\sigma_1),\dots,I_T(\tau_d,\sigma_d)) = g(\tilde{I}_t(\tau_1,\dots,\sigma_d))$ where $g:\R^{2d} \to \R^d$, defined as $g(x_1,x_2,\dots,x_{2d}) = (x_{d+1} - x_1^2, \dots, x_{2d}-x_d^2)$. Therefore an application of the delta method (Theorem 8.22 from \cite{lehmann_casella}) along with Lemma \ref{Smean} establishes the convergence of the fdds of $I_T$ to the fdds of $\mathcal G$ almost everywhere as $T\to\infty$.

\subsection{The dominating function}\label{subsec:dom}
We establish a sufficient condition for the existence of a non-negative integrable function that satisfies condition \eqref{cond:dom}.
\begin{thm}\label{thm:dom}
There exists an integrable function $f:[0,1]^2\to[0,\infty)$ such that
\[
	\E|I_T(\tau,\sigma)|\le f(\tau,\sigma)
\]
for each $T\ge1$ and $(\tau,\sigma)\in[0,1]^2$ if
\[
	\int_0^1\int_0^1\sum_{t_1,t_2,t_3=-\infty}^\infty|\E[X_{t_1}(\tau)X_{t_2}(\sigma)X_{t_3}(\tau)X_0(\sigma)]|d\tau d\sigma<\infty.
\]
\end{thm}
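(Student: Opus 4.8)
The plan is to pass from a bound on $\E|I_T(\tau,\sigma)|$ to a bound on $\E|I_T(\tau,\sigma)|^2$, which can be expanded explicitly. The underlying elementary observation is that $[0,1]^2$ has finite Lebesgue measure, so by the Cauchy--Schwarz inequality every non-negative $h\in L^1([0,1]^2)$ satisfies $\int_0^1\int_0^1 h(\tau,\sigma)^{1/2}\,d\tau\,d\sigma\le\bigl(\int_0^1\int_0^1 h(\tau,\sigma)\,d\tau\,d\sigma\bigr)^{1/2}<\infty$. Hence it suffices to exhibit $h\in L^1([0,1]^2,[0,\infty))$ with $\sup_{T\ge1}\E|I_T(\tau,\sigma)|^2\le h(\tau,\sigma)$, and then put $f=h^{1/2}$, using $\E|I_T(\tau,\sigma)|\le(\E|I_T(\tau,\sigma)|^2)^{1/2}$.

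To estimate $\E|I_T(\tau,\sigma)|^2$ I would write $I_T=\sqrt T\,(Y_T-c)$ with $Y_T(\tau,\sigma)=2\pi[S_{T,2}(\tau,\sigma)-|S_{T,1}(\tau,\sigma)|^2]$ and $c(\tau,\sigma)=\int_{-\pi}^\pi|f_\omega(\tau,\sigma)|^2\,d\omega-\frac1{2\pi}|\int_{-\pi}^\pi f_\omega(\tau,\sigma)\,d\omega|^2$, so that $\E|I_T|^2=T\,\E|Y_T-\E Y_T|^2+T\,|\E Y_T-c|^2$. The deterministic bias term $T\,|\E Y_T-c|^2$ is handled by a standard periodogram-mean computation: expanding $\E S_{T,1}$ and $\E S_{T,2}$ via the Dirichlet-kernel representation of periodogram means and using the $k=2$ instance of assumption \eqref{cond:mixing}, namely $\sum_t(1+|t|)\|r_t\|_2<\infty$, one obtains $|\E Y_T(\tau,\sigma)-c(\tau,\sigma)|\le T^{-1/2}g(\tau,\sigma)$ for an integrable $g$. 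For the fluctuation term I would expand $T\,\E|Y_T-\E Y_T|^2$ using the definitions \eqref{eq:ST1ST2} and the product theorem for cumulants (Theorem~2.3.2 of \cite{Brillinger_2001}): $Y_T$ is a polynomial of degree four in the fDFTs $\widetilde X^{(T)}_{\pm\omega_k}(\tau)$, $\widetilde X^{(T)}_{\pm\omega_k}(\sigma)$, so $\E|Y_T-\E Y_T|^2$ becomes a finite sum, over the indecomposable partitions of a two-row table with $4+4$ entries, of products of joint cumulants of fDFTs, exactly as in the proof of Proposition~\ref{prop:cum}. The joint cumulants of fDFTs are controlled by Proposition~2.6 and Theorem~2.7 of \cite{Panaretos_Tavakoli_2013} together with the supplement \cite{Panaretos_Tavakoli_2013_Supp}: a $p$-th order joint cumulant at frequencies $\omega_{k_1},\dots,\omega_{k_p}$ is $O(1)$, asymptotically equal to the $p$-th order cumulant spectral density \eqref{eq:cum_spectra} when $\sum_j\pm\omega_{k_j}$ is a multiple of $2\pi$ and uniformly negligible otherwise, with an $O(T^{-1})$ Dirichlet-kernel remainder.

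The core of the argument is the bookkeeping for these partition contributions. For a given indecomposable partition the frequency-matching constraints imposed by its blocks reduce the number of free summation indices; weighing this against the prefactors (the $\sqrt T$ in $I_T$ is squared, while $|S_{T,1}|^2$ carries $T^{-2}$ and $S_{T,2}$ carries $T^{-1}$) and the range $\lfloor T/2\rfloor$ of each index, one checks that every contribution is $O(1)$ uniformly in $T$. Most contributions are in fact bounded by an absolute constant: any partition in which every block has order at least three, or in which a higher-order cumulant occurs, is dominated by a product of suprema of cumulant spectral densities, which are finite under the standing assumption \eqref{cond:mixing}, and hence is trivially integrable over $[0,1]^2$. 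The only contributions whose bound genuinely depends on $(\tau,\sigma)$ are those built entirely from second-order cumulants $r_t$ that join the two factors $S_{T,1}$ and $\overline{S_{T,1}}$ along the ``diagonal'' coordinate pattern $(\tau,\sigma,\tau,\sigma)$; there one frequency sum fails to collapse completely, a sum over three free time lags is left, and the contribution is bounded, up to an absolute constant, by
\[
	\int_0^1\int_0^1\sum_{t_1,t_2,t_3=-\infty}^\infty\bigl|\E[X_{t_1}(\tau)X_{t_2}(\sigma)X_{t_3}(\tau)X_0(\sigma)]\bigr|\,d\tau\,d\sigma,
\]
which is finite precisely by the hypothesis of the theorem. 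Adding up the finitely many contributions yields the required $h\in L^1([0,1]^2)$.

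I expect the main obstacle to be exactly this last combinatorial step: enumerating the indecomposable partitions of the $4+4$ table, correctly counting how many summation indices survive each of them after the frequency constraints, evaluating the constrained exponential sums $\frac1T\sum_{k=1}^{\lfloor T/2\rfloor}e^{i\omega_k t}$ while carrying the $(\tau,\sigma)$-dependence through the whole computation, and verifying that the single ``diagonal second-order'' family is the only one not dominated by a constant. This is the same machinery underlying Proposition~\ref{prop:cum} and the computation of the limiting covariance, but here it must be pushed to yield a \emph{uniform-in-$T$, $(\tau,\sigma)$-integrable} majorant rather than a numerical limit, so the spectral-density and moment bounds must be retained as functions of $(\tau,\sigma)$ throughout.
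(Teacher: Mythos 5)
Your route---dominating $\E|I_T(\tau,\sigma)|$ by $(\E|I_T(\tau,\sigma)|^2)^{1/2}$ and controlling the second moment through an eighth-order cumulant expansion---is genuinely different from the paper's, and it has a gap that I do not see how to close under the stated hypothesis. The second moment of $I_T$ involves moments of the fDFT up to order eight, and after the partition bookkeeping you must bound, uniformly in $T$ and pointwise in $(\tau,\sigma)$, contributions such as $\frac1T\sum_k|f_{\omega_k}(\tau,\sigma)|^2|f_{\omega_k}(\sigma,\tau)|^2$ and sums involving fourth- and higher-order cumulant spectral densities evaluated at repeated coordinates like $(\sigma,\tau,\sigma,\tau)$ or $(\tau,\sigma,\tau,\sigma,\tau,\sigma,\tau,\sigma)$. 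Your claim that these are ``dominated by products of suprema of cumulant spectral densities, finite under assumption \eqref{cond:mixing}'' is not justified: the cumulant spectral density of order $k$ is only an element of $L^2([0,1]^k,\mathbb C)$, its uniform boundedness in the frequencies is boundedness of the $L^2$-norm, and evaluation along a lower-dimensional ``diagonal'' set is simply not controlled by an $L^2$-summability condition---this is precisely why the theorem carries the separate pointwise fourth-moment condition. Consequently your assertion that the ``diagonal second-order'' family is the only $(\tau,\sigma)$-dependent contribution is wrong (already $\operatorname{Var}(S_{T,2}(\tau,\sigma))$ produces terms of the type $\int_{-\pi}^{\pi}|f_\omega(\tau,\sigma)|^4d\omega$ and fourth-order spectra at $(\sigma,\tau,\sigma,\tau)$), and your scheme would require diagonal summability conditions at orders six and eight that are neither stated nor implied by the hypothesis. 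The bias estimate $|\E Y_T-c|\le T^{-1/2}g$ with integrable $g$ is likewise asserted rather than proved; the corresponding statement in the paper (\autoref{Smean}) is only a pointwise $o(T^{-1/2})$ a.e., not an integrable majorant with a rate. Finally, the combinatorial step you yourself flag as the main obstacle is left entirely unverified.

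The paper's proof stays at the level of the first moment and needs none of this machinery. From $\E|\xi-\E\xi|\le2\E|\xi|$ one gets $\E|I_T|\le2\sqrt T\,\E|S_{T,2}|+2\sqrt T\,\E|S_{T,1}\overline{S_{T,1}}|$; the triangle inequality reduces both terms to expectations of products of two periodogram kernels at the same point $(\tau,\sigma)$, and the Cauchy--Schwarz inequality reduces those to $\E|p_\omega^{(T)}(\tau,\sigma)|^2$, a single explicit fourth moment of the fDFT. Expanding this fourth moment as a time-domain sum and using Lemma~F.7 of \cite{Panaretos_Tavakoli_2013_Supp} gives the bound of \autoref{lemma:boundforproduct}, namely $\frac{7}{(2\pi)^2T}\sum_{t_1,t_2,t_3}|\E[X_{t_1}(\tau)X_{t_2}(\sigma)X_{t_3}(\tau)X_0(\sigma)]|$, and counting the summands in $S_{T,2}$ and $S_{T,1}\overline{S_{T,1}}$ then yields $\E|I_T(\tau,\sigma)|\le C\sum_{t_1,t_2,t_3}|\E[X_{t_1}(\tau)X_{t_2}(\sigma)X_{t_3}(\tau)X_0(\sigma)]|$, which is integrable exactly by the hypothesis of the theorem. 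No cumulant partitions, no moments beyond the fourth, and no assumptions beyond the stated one are used; to salvage your approach you would have to add such higher-order diagonal conditions explicitly, which defeats the purpose of the statement as formulated.
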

We need an auxiliary lemma to prove \autoref{thm:dom}.
\begin{lemma}\label{lemma:boundforproduct}
For each $(\tau,\sigma)\in[0,1]^2$, $\omega,\lambda\in\mathbb R$ and $T\ge1$,
\[
	\E|p_{\omega}^{(T)}(\tau,\sigma)p_{\lambda}^{(T)}(\tau,\sigma)|
	\le\frac7{(2\pi)^2T}\sum_{t_1,t_2,t_3=-\infty}^\infty|\E[X_{t_1}(\tau)X_{t_2}(\sigma)X_{t_3}(\tau)X_0(\sigma)]|.
\]
\end{lemma}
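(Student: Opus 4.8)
I would first reduce the estimate, for a general pair of frequencies, to a bound on $\E|p_\omega^{(T)}(\tau,\sigma)|^2$ for a single frequency, and then obtain that bound by exploiting the non-negativity of $|p_\omega^{(T)}(\tau,\sigma)|^2$. Since $\{X_t\}$ is real-valued, $\widetilde X_{-\omega}^{(T)}(\sigma)=\overline{\widetilde X_\omega^{(T)}(\sigma)}$, so $|p_\omega^{(T)}(\tau,\sigma)|=|\widetilde X_\omega^{(T)}(\tau)|\,|\widetilde X_\omega^{(T)}(\sigma)|$ and in particular the modulus of a product of periodograms factorizes. Hence, using $ab\le\tfrac12a^2+\tfrac12b^2$ (or Cauchy--Schwarz),
\[
	\E\bigl|p_\omega^{(T)}(\tau,\sigma)\,p_\lambda^{(T)}(\tau,\sigma)\bigr|
	=\E\bigl[\,|p_\omega^{(T)}(\tau,\sigma)|\,|p_\lambda^{(T)}(\tau,\sigma)|\,\bigr]
	\le\tfrac12\,\E|p_\omega^{(T)}(\tau,\sigma)|^2+\tfrac12\,\E|p_\lambda^{(T)}(\tau,\sigma)|^2,
\]
so it suffices to show $\E|p_\omega^{(T)}(\tau,\sigma)|^2\le(2\pi)^{-2}T^{-1}\sum_{t_1,t_2,t_3\in\mathbb Z}|\E[X_{t_1}(\tau)X_{t_2}(\sigma)X_{t_3}(\tau)X_0(\sigma)]|$, uniformly in $\omega\in\mathbb R$ and $T\ge1$ (the analogous bound for $\lambda$ is identical).

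\textbf{Key step.} Writing out the four fDFTs in $|p_\omega^{(T)}(\tau,\sigma)|^2=\widetilde X_\omega^{(T)}(\tau)\widetilde X_{-\omega}^{(T)}(\sigma)\widetilde X_{-\omega}^{(T)}(\tau)\widetilde X_\omega^{(T)}(\sigma)$ gives the finite sum
\[
	|p_\omega^{(T)}(\tau,\sigma)|^2=\frac1{(2\pi T)^2}\sum_{t_1,t_2,t_3,t_4=0}^{T-1}X_{t_1}(\tau)X_{t_2}(\sigma)X_{t_3}(\tau)X_{t_4}(\sigma)\exp\!\bigl(-i\omega(t_1-t_2-t_3+t_4)\bigr).
\]
Taking expectations, the left-hand side is a non-negative real number, hence equal to its own modulus; moving the modulus inside the finite sum (the exponentials have modulus $1$) yields
\[
	\E|p_\omega^{(T)}(\tau,\sigma)|^2\le\frac1{(2\pi T)^2}\sum_{t_1,t_2,t_3,t_4=0}^{T-1}\bigl|\E[X_{t_1}(\tau)X_{t_2}(\sigma)X_{t_3}(\tau)X_{t_4}(\sigma)]\bigr|.
\]
By strict stationarity, shifting all time indices by $-t_4$ gives $\E[X_{t_1}(\tau)X_{t_2}(\sigma)X_{t_3}(\tau)X_{t_4}(\sigma)]=\E[X_{t_1-t_4}(\tau)X_{t_2-t_4}(\sigma)X_{t_3-t_4}(\tau)X_0(\sigma)]$, so for each fixed $t_4$ the inner sum over $(t_1,t_2,t_3)$ is dominated by $\sum_{t_1,t_2,t_3\in\mathbb Z}|\E[X_{t_1}(\tau)X_{t_2}(\sigma)X_{t_3}(\tau)X_0(\sigma)]|$; summing over the $T$ admissible values of $t_4$ supplies one factor $T$, which cancels one of the two $T$'s in $(2\pi T)^{-2}$. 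Combined with the first display this gives the asserted inequality — in fact with constant $1$ in place of $7$, so the statement follows a fortiori. (When the right-hand side is $+\infty$ there is nothing to prove; otherwise every interchange of expectation and summation above is legitimate since the sums over $\{0,\dots,T-1\}$ are finite and, by stationarity, all the mixed fourth moments occurring are then finite as well.)

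\textbf{Where the difficulty is.} There is no genuine analytic obstacle here; the one thing to notice is the device of routing the estimate through $\E|p_\omega^{(T)}(\tau,\sigma)|^2$, a non-negative real quantity. This is precisely what lets the modulus be pulled \emph{outside} the expectation, producing $|\E[X_{t_1}(\tau)X_{t_2}(\sigma)X_{t_3}(\tau)X_{t_4}(\sigma)]|$ rather than the generally much larger $\E|X_{t_1}(\tau)X_{t_2}(\sigma)X_{t_3}(\tau)X_{t_4}(\sigma)|$ that a direct estimate of $\E|p_\omega^{(T)}p_\lambda^{(T)}|$ (which is not real-valued) would give. Everything else — reading off the index pattern $\tau,\sigma,\tau,\sigma$ and using stationarity to eliminate one summation variable — is routine bookkeeping. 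The lemma is then applied termwise to the defining double sums of $S_{T,1}$ and $S_{T,2}$ in order to produce the dominating function required in \autoref{thm:dom}.
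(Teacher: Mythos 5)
Your proof is correct, and it reaches the stated bound (indeed with constant $1$ in place of $7$) by a more elementary route in the key estimation step. Both you and the paper first reduce to the single-frequency quantity $\E|p_{\omega}^{(T)}(\tau,\sigma)|^2$ (the paper via Cauchy--Schwarz, you via $|ab|\le\tfrac12(|a|^2+|b|^2)$ -- immaterial), and both expand $|p_{\omega}^{(T)}(\tau,\sigma)|^2=\widetilde X_{\omega}^{(T)}(\tau)\widetilde X_{-\omega}^{(T)}(\sigma)\widetilde X_{-\omega}^{(T)}(\tau)\widetilde X_{\omega}^{(T)}(\sigma)$ into a quadruple sum and use stationarity to shift by the fourth index. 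The difference is in how this sum is then controlled: the paper keeps the identity exact, introduces the indicator $h^{(T)}$, isolates the main term through $\sum_t[h^{(T)}(t)]^4=T$, bounds the remainder by Lemma~F.7 of \cite{Panaretos_Tavakoli_2013_Supp}, and finally uses $|t_1|+|t_2|+|t_3|\le 3T$, which is where the constant $7=1+2\cdot 3$ comes from; you instead pull moduli inside the finite sum right away (legitimate because $\E|p_{\omega}^{(T)}(\tau,\sigma)|^2$ is real and nonnegative, so one obtains $|\E[\cdots]|$ rather than $\E|\cdots|$ -- you correctly single this out as the one essential observation), note that each term depends only on $(t_1-t_4,t_2-t_4,t_3-t_4)$, dominate the inner triple sum by the full sum over $\Z^3$, and count the $T$ values of $t_4$. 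Your version is shorter, self-contained (no appeal to the taper lemma from the supplement), uniform in the frequency, and sharper; what the paper's exact-identity route buys is an asymptotic expansion (main term plus an explicitly controlled remainder), which is more machinery than a domination bound needs, and since the lemma is only used to produce the integrable dominating function in \autoref{thm:dom}, the improved constant is inconsequential there. Two cosmetic remarks: the factorization $|p_{\omega}^{(T)}(\tau,\sigma)|=|\widetilde X_{\omega}^{(T)}(\tau)|\,|\widetilde X_{\omega}^{(T)}(\sigma)|$ is not actually needed for your first display, and the parenthetical ``which is not real-valued'' should refer to the product $p_{\omega}^{(T)}p_{\lambda}^{(T)}$ itself rather than to its expected modulus; neither affects the argument.
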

\begin{proof}
By the Cauchy-Schwarz inequality,
\[
	\E|p_{\omega}^{(T)}(\tau,\sigma)p_{\lambda}^{(T)}(\tau,\sigma)|\le(\E|p_\omega^{(T)}(\tau,\sigma)|^2)^{1/2}(\E|p_\lambda^{(T)}(\tau,\sigma)|^2)^{1/2}.
\]
We have that
\[
	\E|p_\omega^{(T)}(\tau,\sigma)|^2
	=\E[\widetilde X_\omega(\tau)\widetilde X_{-\omega}(\sigma)\widetilde X_{-\omega}(\tau)\widetilde X_\omega(\sigma)].
\]
The definition of the fDFT, the linearity of the expectation and the stationarity of the sequence $\{X_t\}_{t\in\mathbb Z}$ yield
\begin{align*}
	&\ E[\widetilde X_\omega(\tau)\widetilde X_{-\omega}(\sigma)\widetilde X_{-\omega}(\tau)\widetilde X_\omega(\sigma)]\\
	&=\frac1{(2\pi T)^2}\sum_{u_1,u_2,u_3,u_4=0}^{T-1}\exp(-i\omega(u_1-u_2-u_3+u_4))\E[X_{u_1}(\tau)X_{u_2}(\sigma)X_{u_3}(\tau)X_{u_4}(\sigma)]\\
	&=\frac1{(2\pi T)^2}\sum_{u_1,u_2,u_3,u_4=0}^{T-1}\exp(-i\omega(u_1-u_2-u_3+u_4))\E[X_{u_1-u_4}(\tau)X_{u_2-u_4}(\sigma)X_{u_3-u_4}(\tau)X_0(\sigma)].
\end{align*}
Let $h^T(t)=1$ for $0\le t\le T-1$ and $0$ otherwise. By setting $t_i=u_i-u_4$ for $1\le i\le 3$ and $t=t_4$, we obtain
\begin{align*}
	&\E[\widetilde X_\omega(\tau)\widetilde X_{-\omega}(\sigma)\widetilde X_{-\omega}(\tau)\widetilde X_\omega(\sigma)]\\
	&=\frac1{(2\pi T)^2}\sum_{t_1,t_2,t_3=-(T-1)}^{T-1}\exp(-i\omega(t_1-t_2-t_3))\E[X_{t_1}(\tau)X_{t_2}(\sigma)X_{t_3}(\tau)X_0(\sigma)]\\
	&\quad\times\sum_{t\in\mathbb Z}h^{(T)}(t_1+t)h^{(T)}(t_2+t)h^{(T)}(t_3+t)h^{(T)}(t)\\
	&=\frac1{(2\pi)^2T}\sum_{t_1,t_2,t_3=-(T-1)}^{T-1}\exp(-i\omega(t_1-t_2-t_3))\E[X_{t_1}(\tau)X_{t_2}(\sigma)X_{t_3}(\tau)X_0(\sigma)]\\
	&\quad+\frac1{(2\pi T)^2}\sum_{t_1,t_2,t_3=-(T-1)}^{T-1}\exp(-i\omega(t_1-t_2-t_3))\E[X_{t_1}(\tau)X_{t_2}(\sigma)X_{t_3}(\tau)X_0(\sigma)]\\
	&\quad\times\sum_{t\in\mathbb Z}[h^{(T)}(u_1+t)h^{(T)}(u_2+t)h^{(T)}(u_3+t)h^{(T)}(t)-[h^{(T)}(t)]^4]
\end{align*}
since
\[
	\sum_{t\in\mathbb Z}[h^{(T)}(t)]^4=T.
\]
Using Lemma~F.7 from \citet{Panaretos_Tavakoli_2013_Supp} it follows that 
\begin{align*}
	&\E[\widetilde X_\omega(\tau)\widetilde X_{-\omega}(\sigma)\widetilde X_{-\omega}(\tau)\widetilde X_\omega(\sigma)]\\
	&\le\frac1{(2\pi)^2T}\sum_{t_1,t_2,t_3=-(T-1)}^{T-1}|\E[X_{t_1}(\tau)X_{t_2}(\sigma)X_{t_3}(\tau)X_0(\sigma)]|\\
	&\quad+\frac2{(2\pi T)^2}\sum_{t_1,t_2,t_3=-(T-1)}^{T-1}(|t_1|+|t_2|+|t_3|)|\E[X_{t_1}(\tau)X_{t_2}(\sigma)X_{t_3}(\tau)X_0(\sigma)]|\\
	&\le\frac7{(2\pi)^2T}\sum_{t_1,t_2,t_3=-(T-1)}^{T-1}|\E[X_{t_1}(\tau)X_{t_2}(\sigma)X_{t_3}(\tau)X_0(\sigma)]|\\
	&\le\frac7{(2\pi)^2T}\sum_{t_1,t_2,t_3=-\infty}^\infty|\E[X_{t_1}(\tau)X_{t_2}(\sigma)X_{t_3}(\tau)X_0(\sigma)]|.
\end{align*}
Analogously, we obtain 
\[
	\E[\widetilde X_\lambda(\tau)\widetilde X_{-\lambda}(\sigma)\widetilde X_{-\lambda}(\tau)\widetilde X_\lambda(\sigma)]
	\le\frac7{(2\pi)^2T}\sum_{t_1,t_2,t_3=-\infty}^\infty|\E[X_{t_1}(\tau)X_{t_2}(\sigma)X_{t_3}(\tau)X_0(\sigma)]|.
\]
which completes the proof of Lemma \ref{lemma:boundforproduct}.
\end{proof}
\begin{proof}[Proof of \autoref{thm:dom}]
We have that
\[
	\E|I_T(\tau,\sigma)|\le 2\sqrt T\E|S_{T,2}(\tau,\sigma)|+2\sqrt T\E|S_{T,1}(\tau,\sigma)\overline S_{T,1}(\tau,\sigma)|
\]
using the inequality $\E|\xi-\E\xi|\le2\E|\xi|$ for any random variable $\xi$ such that $\E|\xi|<\infty$. Using the definition of $S_{T,2}$ and the triangle inequality,
\[
	\sqrt T\E|S_{T,2}(\tau,\sigma)|
	\le\frac2{\sqrt T}\sum_{k=1}^{\lfloor T/2\rfloor}\E|p_{\omega_k}^{(T)}(\tau,\sigma)\overline p_{\omega_{k-1}}^{(T)}(\tau,\sigma)|.
\]
The fact that $\overline p_{\omega_{k-1}}^{(T)}(\tau,\sigma)=p_{-\omega_{k-1}}^{(T)}(\tau,\sigma)$ and the bound of \autoref{lemma:boundforproduct}
now yield
\[
	\sqrt T\E|S_{T,2}(\tau,\sigma)|\\
	\le\frac7{(2\pi)^2}\sum_{t_1,t_2,t_3=-\infty}^\infty|\E[X_{t_1}(\tau)X_{t_2}(\sigma)X_{t_3}(\tau)X_0(\sigma)]|
\]
for each $T\ge1$ and $(\tau,\sigma)\in[0,1]^2$. Similarly, using the triangle inequality,
\begin{multline*}
	\E|S_{T,1}(\tau,\sigma)\overline S_{T,1}(\tau,\sigma)|
	\le\frac1{T^2}\biggl[\sum_{k=1}^{\lfloor T/2\rfloor}\sum_{l=1}^{\lfloor T/2\rfloor}\E|p_{\omega_k}^{(T)}(\tau,\sigma)p_{\omega_l}^{(T)}(\tau,\sigma)|\\
	+\sum_{k=1}^{\lfloor T/2\rfloor}\sum_{l=1}^{\lfloor T/2\rfloor}\E|\overline p_{\omega_k}^{(T)}(\tau,\sigma)\overline p_{\omega_l}^{(T)}(\tau,\sigma)|+2\sum_{k=1}^{\lfloor T/2\rfloor}\sum_{l=1}^{\lfloor T/2\rfloor}\E|\overline p_{\omega_k}^{(T)}(\tau,\sigma)p_{\omega_l}^{(T)}(\tau,\sigma)|\biggr]
\end{multline*}
and
\[
	\sqrt T\E|S_{T,1}(\tau,\sigma)\overline S_{T,1}(\tau,\sigma)|
	\le\frac{7}{(2\pi)^2}\sum_{t_1,t_2,t_3=-\infty}^\infty|\E[X_{t_1}(\tau)X_{t_2}(\sigma)X_{t_3}(\tau)X_0(\sigma)]|
\]
using the bound of \autoref{lemma:boundforproduct}. The proof is complete.
\end{proof}

\section{More technical details} \label{sec6} 
\subsection{Limiting Mean and Variance Calculation}\label{subsec:lim_cov}
\def\theequation{6.\arabic{equation}}
\setcounter{equation}{0}
In this Section we calculate the limiting covariance kernel of the process $I_T$. The main result of this section is stated in Proposition \ref{ITcov}.

\begin{lemma}\label{Smean}
Under the assumption \eqref{cond:mixing} of \autoref{thm2}, we have 
$$\sqrt{T}\left(\E(S_{T,2}(\tau,\sigma)) -\frac{1}{2\pi}\int_{-\pi}^{\pi}\vert f_{\omega}(\tau,\sigma)\vert^2d\omega\right) \to 0$$ $$\sqrt{T}\left(\E(S_{T,1}(\tau,\sigma)) - \frac{1}{2\pi}\int_{-\pi}^{\pi}f_{\omega}(\tau,\sigma)d\omega\right) \to 0$$ as $T \to \infty$ for almost every $(\tau,\sigma) \in [0,1]^2$.
\end{lemma}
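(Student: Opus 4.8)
The plan is to prove that for a.e.\ $(\tau,\sigma)\in[0,1]^2$ both biases are in fact $O(T^{-1})$, so that multiplication by $\sqrt T$ gives $O(T^{-1/2})\to0$. The basic tool is the exact mean of a periodogram ordinate at a Fourier frequency: since $\sum_{t=0}^{T-1}e^{-i\omega_k t}=0$ for $k=1,\dots,\lfloor T/2\rfloor$, the mean $\mu$ drops out of $\wtilde X_{\omega_k}^{(T)}$ and stationarity gives
\[
	\E p_{\omega_k}^{(T)}(\tau,\sigma)
	=\frac1{2\pi}\sum_{|u|<T}(1-|u|/T)\,r_u(\tau,\sigma)e^{-i\omega_k u}
	=f_{\omega_k}(\tau,\sigma)+\rho_T(\omega_k;\tau,\sigma),
\]
with $|\rho_T(\omega;\tau,\sigma)|\le\frac1{\pi T}\sum_{u\in\Z}(1+|u|)|r_u(\tau,\sigma)|=:C(\tau,\sigma)/T$ uniformly in $\omega$. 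By the triangle inequality in $L^2([0,1]^2,\R)$, assumption \eqref{cond:mixing} with $k=2$ makes $\sum_u(1+|u|)|r_u(\cdot,\cdot)|$ an $L^2$-function, hence $C(\tau,\sigma)<\infty$ a.e.; the same estimate shows $\omega\mapsto f_\omega(\tau,\sigma)$ is Lipschitz with constant $C(\tau,\sigma)$ and $|f_\omega(\tau,\sigma)|\le C(\tau,\sigma)$, a.e.

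For $S_{T,1}$, substitute the identity into $\E S_{T,1}(\tau,\sigma)=\frac1T\sum_{k=1}^{\lfloor T/2\rfloor}(\E p_{\omega_k}^{(T)}(\tau,\sigma)+\overline{\E p_{\omega_k}^{(T)}(\tau,\sigma)})$: replacing the periodogram means by $f_{\omega_k}$ and $\overline{f_{\omega_k}}=f_{-\omega_k}$ costs at most $C(\tau,\sigma)/T$, and it remains to compare $\frac1T\sum_{k=1}^{\lfloor T/2\rfloor}(f_{\omega_k}+\overline{f_{\omega_k}})(\tau,\sigma)$ with $\frac1{2\pi}\int_{-\pi}^\pi f_\omega(\tau,\sigma)\,d\omega=\frac1{2\pi}r_0(\tau,\sigma)$ (the last equality by \eqref{eq:kerneltilde} and the Remark that follows it). I would do this either by viewing the sum as a left-endpoint Riemann sum over $(0,\pi]$, mesh $2\pi/T$, of the Lipschitz integrand $\omega\mapsto 2\operatorname{Re}f_\omega(\tau,\sigma)$ --- error, including the missing strip near $0$, is $O(T^{-1})$ --- or by inserting $f_\omega=\frac1{2\pi}\sum_ur_ue^{-i\omega u}$, interchanging the absolutely convergent sums, symmetrising, and using $\sum_{k=-\lfloor(T-1)/2\rfloor}^{\lfloor T/2\rfloor}e^{-i\omega_ku}=T\,\mathbf1\{T\mid u\}$ together with $\sum_{|u|\ge T}|r_u(\tau,\sigma)|=O(T^{-1})$ to peel off an $O(T^{-1})$ boundary term and land exactly on $\frac1{2\pi}r_0(\tau,\sigma)$. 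Either way the total bias is $O(T^{-1})$ and the second assertion follows.

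For $S_{T,2}$ --- the substance --- write $\E S_{T,2}(\tau,\sigma)=\frac2T\sum_{k=2}^{\lfloor T/2\rfloor}\E[p_{\omega_k}^{(T)}(\tau,\sigma)\overline{p_{\omega_{k-1}}^{(T)}(\tau,\sigma)}]$ and expand the expectation of the product of the four fDFTs $\wtilde X_{\omega_k}^{(T)}(\tau)$, $\wtilde X_{-\omega_k}^{(T)}(\sigma)$, $\wtilde X_{-\omega_{k-1}}^{(T)}(\tau)$, $\wtilde X_{\omega_{k-1}}^{(T)}(\sigma)$ (labelled $1,2,3,4$) over partitions of $\{1,2,3,4\}$ by the product theorem for moments and cumulants (Theorem~2.3.2 of \cite{Brillinger_2001}); partitions with a singleton vanish because at these frequencies the fDFTs are centred. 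The pairing $\{1,2\}\{3,4\}$ yields $\E p_{\omega_k}^{(T)}(\tau,\sigma)\cdot\E p_{-\omega_{k-1}}^{(T)}(\tau,\sigma)=f_{\omega_k}(\tau,\sigma)\overline{f_{\omega_{k-1}}(\tau,\sigma)}+O(T^{-1})$ uniformly in $k$, by the bias estimate above and the a.e.\ boundedness of $f$. The remaining pairings $\{1,3\}\{2,4\}$ and $\{1,4\}\{2,3\}$ and the joint fourth-order cumulant are $O(T^{-1})$ uniformly in $k$: in each, the frequency combination entering the inner sum over the running time index is $\pm(\omega_k-\omega_{k-1})=\pm2\pi/T$ or $\pm(\omega_k+\omega_{k-1})=\pm2\pi(2k-1)/T$, none a multiple of $2\pi$ for $2\le k\le\lfloor T/2\rfloor$, so that inner sum over the full block $\{0,\dots,T-1\}$ vanishes \emph{exactly} and only the edge terms --- at most the sum of the absolute lags in number --- survive; bounding these against weighted lag-sums of second- and fourth-order (cross-)cumulant kernels, a.e.\ finite under the hypotheses of \autoref{thm2} (assumption \eqref{cond:mixing} with $k=2$ and $k=4$; this is the kind of fDFT-cumulant estimate behind \autoref{lemma:boundforproduct} and Lemma~F.7 of \cite{Panaretos_Tavakoli_2013_Supp}), turns them into $O(T^{-1})$. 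Hence $\E S_{T,2}(\tau,\sigma)=\frac2T\sum_{k=2}^{\lfloor T/2\rfloor}f_{\omega_k}(\tau,\sigma)\overline{f_{\omega_{k-1}}(\tau,\sigma)}+O(T^{-1})$; replacing $\overline{f_{\omega_{k-1}}}$ by $\overline{f_{\omega_k}}$ via the Lipschitz bound (cost $O(T^{-1})$ after summing) leaves $\frac2T\sum_{k=2}^{\lfloor T/2\rfloor}|f_{\omega_k}(\tau,\sigma)|^2+O(T^{-1})$, a Riemann sum of the Lipschitz integrand $\omega\mapsto|f_\omega(\tau,\sigma)|^2$ converging with error $O(T^{-1})$ to $\frac1\pi\int_0^\pi|f_\omega(\tau,\sigma)|^2\,d\omega=\frac1{2\pi}\int_{-\pi}^\pi|f_\omega(\tau,\sigma)|^2\,d\omega$ (using $|f_{-\omega}|=|f_\omega|$), the missing endpoint strips and even/odd-$T$ bookkeeping also being $O(T^{-1})$. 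Multiplying by $\sqrt T$ gives the first assertion.

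The step I expect to be the main obstacle is the uniform-in-$k$, pointwise-a.e.\ $O(T^{-1})$ control of the three non-leading terms in the expansion of $\E[p_{\omega_k}^{(T)}\overline{p_{\omega_{k-1}}^{(T)}}]$: a crude geometric-series bound on the inner sum $\sum_{t=0}^{T-1}e^{-2\pi imt/T}$ is only $O(1)$ when $m$ is of order $1$, as it is for the adjacent frequencies $\omega_k,\omega_{k-1}$, so one must instead exploit its exact vanishing when $T\nmid m$ and then carefully account for the finitely many edge terms lost through the constraints $0\le t,\ t+(\text{lag})\le T-1$ --- this is precisely why $S_{T,2}$ is built from the staggered product $p_{\omega_k}^{(T)}\bar p_{\omega_{k-1}}^{(T)}$ of \emph{distinct} consecutive Fourier frequencies rather than from $|p_{\omega_k}^{(T)}|^2$, and it is also the place where weighted (not merely absolute) summability of the second- and fourth-order cumulant kernels, evaluated on the low-dimensional slices of $[0,1]^k$ on which they occur here, is indispensable: the $\sqrt T$ normalisation forces one full order beyond summability. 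Everything else --- the Riemann-sum error estimates for Lipschitz integrands and the reduction of a.e.\ finiteness of weighted lag-sums to the stated assumptions via the triangle inequality in $L^2$ and Tonelli --- is routine.
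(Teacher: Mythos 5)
Your overall route is the same as the paper's: write $\E S_{T,2}(\tau,\sigma)=\frac2T\sum_k\E\bigl[p^{(T)}_{\omega_k}(\tau,\sigma)\overline{p^{(T)}_{\omega_{k-1}}(\tau,\sigma)}\bigr]$, isolate the product of periodogram means as the leading term, show the remainder is $O(T^{-1})$ uniformly in $k$, and then control the Riemann-sum-to-integral error. The paper simply cites Propositions 2.5--2.6 of \cite{Panaretos_Tavakoli_2013} for the $O(T^{-1})$ periodogram bias and the $O(T^{-1})$ covariance remainder (you re-derive these via Bartlett's formula and edge-term counting), and it handles the sum-versus-integral step through the Fourier expansion of $f_\omega$ where you use a.e.\ Lipschitz continuity of $\omega\mapsto f_\omega(\tau,\sigma)$; both give $O(T^{-1})$, hence $o(T^{-1/2})$ after multiplying by $\sqrt T$, and your treatment of $S_{T,1}$ matches the paper's.

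The one genuine flaw is in your justification of the remainder terms in the expansion of the mixed fourth moment. The exact-cancellation argument is valid for the two cross-pairings, whose frequency sums $\pm(\omega_k-\omega_{k-1})=\pm2\pi/T$ and $\pm(\omega_k+\omega_{k-1})=\pm2\pi(2k-1)/T$ are indeed never multiples of $2\pi$ for $2\le k\le\lfloor T/2\rfloor$. It fails, however, for the joint fourth-order cumulant $\cm\bigl(\wtilde X^{(T)}_{\omega_k}(\tau),\wtilde X^{(T)}_{-\omega_k}(\sigma),\wtilde X^{(T)}_{-\omega_{k-1}}(\tau),\wtilde X^{(T)}_{\omega_{k-1}}(\sigma)\bigr)$: there the four frequencies sum to zero exactly, so the inner sum over the free time index does not oscillate at all and contributes a full factor $T$. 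The term is nevertheless $O(T^{-1})$ uniformly in $k$, but for a different reason: the $(2\pi T)^{-2}$ normalisation of the four-fold sum leaves $T^{-2}\cdot T$ once the free index is summed out against the (weight-$(1+|t_j|)$) summable fourth-order cumulant kernel --- this is precisely Theorem B.2 of \cite{Panaretos_Tavakoli_2013_Supp} with $\Delta^{(T)}(0)=T$, the bound the paper itself uses throughout Lemma~\ref{Scov} (terms of the form $\frac{2\pi}{T}f_{\omega_k,-\omega_k,\omega_l}(\cdot)+O(T^{-2})$). With that substitution your argument goes through. A smaller point, not held against you since the paper does the same: both your sketch and the published proof tacitly use pointwise a.e.\ finiteness of weighted cumulant sums evaluated on diagonal slices such as $r_u(\tau,\tau)$ and the fourth-order kernel at $(\tau,\sigma,\tau,\sigma)$, which the $L^2$-type assumption \eqref{cond:mixing} does not literally provide.
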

\begin{proof}
Using Proposition 2.5 and 2.6 from \cite{Panaretos_Tavakoli_2013} we obtain for almost every $(\tau,\sigma) \in [0,1]^2$,
\begin{align}
\E(S_{T,2}(\tau,\sigma)) =&\frac{2}{T}\sum_{k=1}^{\lfloor T/2 \rfloor}\E\left(p_{\omega_k}^{(T)}(\tau,\sigma)p_{\omega_{k-1}}^{(T)}(\sigma,\tau)\right)\nonumber\\
= &\frac{2}{T}\sum_{k=1}^{\lfloor T/2 \rfloor}\text{Cov}\left(p_{\omega_k}^{(T)}(\tau,\sigma),p_{\omega_{k-1}}^{(T)}(\sigma,\tau)\right) + \frac{2}{T}\sum_{k=1}^{\lfloor T/2 \rfloor}\E\left(p_{\omega_k}^{(T)}(\tau,\sigma)\right)\E\left(p_{\omega_{k-1}}^{(T)}(\sigma,\tau)\right)\nonumber\\
=&\frac{2}{T}\sum_{k=1}^{\lfloor T/2 \rfloor} O(T^{-1}) + \frac{2}{T}\sum_{k=1}^{\lfloor T/2 \rfloor}\left(f_{\omega_k}(\tau,\sigma)f_{\omega_{k-1}}(\sigma,\tau) + O(T^{-1})\right)\nonumber\\
= &\frac{2}{T}\sum_{k=1}^{\lfloor T/2 \rfloor}f_{\omega_k}(\tau,\sigma)f_{\omega_{k-1}}(\sigma,\tau) +O(T^{-1})\nonumber\\
\E(S_{T,1}(\tau,\sigma)) =&\frac{1}{T}\sum_{k=1}^{\lfloor T/2 \rfloor}\E\left(p_{\omega_k}^{(T)}(\tau,\sigma)+p_{\omega_{k}}^{(T)}(\sigma,\tau)\right)
= \frac{1}{T}\sum_{k=1}^{\lfloor T/2 \rfloor} \left(f_{\omega_k}(\tau,\sigma) + f_{\omega_k}(\sigma,\tau)\right) + O(T^{-1}).\nonumber
\end{align}
An upper bound on the approximation error for the integral by sum is given by
\begin{align}
& \Big\vert\frac{1}{T}\sum_{k=1}^{\lfloor T /2\rfloor} f_{\omega_k}(\tau,\sigma)f_{\omega_{k-1}}(\sigma,\tau) - \frac{1}{2\pi}\int_0^{\pi}\vert f_{\omega}(\tau,\sigma)\vert^2d\omega \Big\vert \nonumber\\
\leq &\frac{1}{2\pi}\sum_{k=1}^{\lfloor T /2\rfloor}\sum_{t_1,t_2}\int_{\omega_{k-1}}^{\omega_k}\Big\vert \exp(-i\omega_k t_1 - i\omega_{k-1}t_2) - \exp(-i\omega(t_1 + t_2)) \Big\vert d\omega\vert r_{t_1}(\tau,\sigma)r_{t_2}(\sigma,\tau)\vert \nonumber\\
\leq &\frac{2\pi}{T^2}\sum_{k=1}^{\lfloor T/2 \rfloor}\sum_{t_1,t_2 \leq N}\frac{1}{\vert t_1 + t_2\vert}\vert r_{t_1}(\tau,\sigma)r_{t_2}(\sigma,\tau)\vert  + \sum_{t_1,t_2 > N}\vert r_{t_1}(\tau,\sigma)r_{t_2}(\sigma,\tau)\vert \label{eq:diff1}
\end{align}
As $\sum_{t_1,t_2}\int_0^1\int_0^1\vert r_{t_1}(\tau,\sigma)r_{t_2}(\sigma,\tau)\vert d\tau d\sigma \leq (\sum_t\| r_t \|_2)^2 < \infty$, therefore $\sum_{t_1,t_2}\vert r_{t_1}(\tau,\sigma)r_{t_2}(\sigma,\tau)\vert < \infty$ for almost every $(\tau,\sigma)$, and hence we can choose $N$ appropriately so that the upper bound given in \eqref{eq:diff1} is of order  $O(T^{-1})$.  The term $\E(S_{T,1}(\tau,\sigma)$ can be dealt with similarly.
\end{proof}

\begin{lemma}
\label{Scov}
Under assumption (ii) of Theorem \ref{thm2}, for almost all $(\tau_1,\sigma_1,\tau_2,\sigma_2) \in [0,1]^4$, the limit of the covariance matrix $\Sigma$ of the vector $ \wtilde{I}_T(\tau_1,\tau_2,\sigma_1,\sigma_2)$
defiend in \eqref{itilde} is given by
\begin{align}
\Sigma_{12} = \Sigma_{21} \to & \frac{1}{2\pi}\int_{-\pi}^{\pi}\left(f_{\omega}(\tau_1,\sigma_2)f_{\omega}(\tau_2,\sigma_1) +f_{\omega}(\tau_1,\tau_2)f_{\omega}(\sigma_2,\sigma_1)\right)d\omega \nonumber\\
&+\frac{1}{2\pi}\int_{-\pi}^{\pi} \int_{-\pi}^{\pi} f_{\omega_1,-\omega_1,\omega_2}(\tau_1,\sigma_1,\tau_2,\sigma_2)\omega_1 d\omega_2\nonumber\\
\Sigma_{34} = \Sigma_{43} \to &\frac{2}{\pi}\int_{-\pi}^{\pi}f_{\omega}(\tau_1,\sigma_1)f_{\omega}(\sigma_1,\tau_2)f_{\omega}(\sigma_2,\tau_1)f_{\omega}(\tau_2,\sigma_2)d\omega\nonumber\\ 
&+ \frac{2}{\pi}\int_{-\pi}^{\pi}f_{\omega}(\tau_1,\sigma_1)f_{\omega}(\tau_2,\tau_1)f_{\omega}(\sigma_1,\sigma_2)f_{\omega}(\sigma_2,\tau_2)d\omega\nonumber\\
&+ \frac{1}{\pi}\int_{-\pi}^{\pi}f_{\omega}(\tau_1,\sigma_2)f_{\omega}(\tau_2,\sigma_1)f_{\omega}(\sigma_1,\tau_2)f_{\omega}(\sigma_2,\tau_1)d\omega\nonumber\\
&+\frac{2}{\pi}\int_{-\pi}^{\pi}\int_{-\pi}^{\pi}f_{\omega_1}(\tau_1,\sigma_1)f_{\omega_2}(\tau_2,\sigma_2)f_{\omega_1,-\omega_1,\omega_2}(\sigma_1,\tau_1,\sigma_2,\tau_2)d\omega_1d\omega_2\nonumber\\
\Sigma_{23} = \Sigma_{32} \to &\frac{1}{\pi}\int_{-\pi}^{\pi}f_{\omega}(\tau_1,\sigma_1)f_{\omega}(\sigma_1,\sigma_2)f_{\omega}(\tau_2,\tau_1)d\omega +\frac{1}{\pi}\int_{-\pi}^{\pi}f_{\omega}(\tau_1,\sigma_1)f_{\omega}(\sigma_1,\tau_2)f_{\omega}(\sigma_2,\tau_1)d\omega\nonumber\\
&+\frac{1}{\pi}\int_{-\pi}^{\pi}\int_{-\pi}^{\pi}f_{\omega_1}(\tau_1,\sigma_1)f_{\omega_1,-\omega_1,\omega_2}(\sigma_1,\tau_1,\sigma_2,\tau_2)d\omega_1d\omega_2\nonumber\\
\Sigma_{14} = \Sigma_{41} \to &\frac{1}{\pi}\int_{-\pi}^{\pi}f_{\omega}(\tau_2,\sigma_2)f_{\omega}(\sigma_2,\sigma_1)f_{\omega}(\tau_1,\tau_2)d\omega +\frac{1}{\pi}\int_{-\pi}^{\pi}f_{\omega}(\tau_2,\sigma_2)f_{\omega}(\sigma_2,\tau_1)f_{\omega}(\sigma_1,\tau_2)d\omega\nonumber\\
&+\frac{1}{\pi}\int_{-\pi}^{\pi}\int_{-\pi}^{\pi}f_{\omega_1}(\tau_2,\sigma_2)f_{\omega_1,-\omega_1,\omega_2}(\sigma_2,\tau_2,\sigma_1,\tau_1)d\omega_1d\omega_2 \nonumber\\
\Sigma_{13} = \Sigma_{31} \to &\frac{1}{\pi}\int_{-\pi}^{\pi}f_{\omega}(\tau_1,\sigma_1)f_{\omega}(\sigma_1,\sigma_1)f_{\omega}(\tau_1,\tau_1)d\omega +\frac{1}{\pi}\int_{-\pi}^{\pi}f_{\omega}(\tau_1,\sigma_1)f_{\omega}^2(\sigma_1,\tau_1)d\omega\nonumber\\
&+\frac{1}{\pi}\int_{-\pi}^{\pi}\int_{-\pi}^{\pi}f_{\omega_1}(\tau_1,\sigma_1)f_{\omega_1,-\omega_1,\omega_2}(\sigma_1,\tau_1,\sigma_1,\tau_1)d\omega_1d\omega_2\nonumber\\
\Sigma_{24} = \Sigma_{42} \to &\frac{1}{\pi}\int_{-\pi}^{\pi}f_{\omega}(\tau_2,\sigma_2)f_{\omega}(\sigma_2,\sigma_2)f_{\omega}(\tau_2,\tau_2)d\omega +\frac{1}{\pi}\int_{-\pi}^{\pi}f_{\omega}(\tau_2,\sigma_2)f_{\omega}^2(\sigma_2,\tau_2)d\omega\nonumber\\
&+\frac{1}{\pi}\int_{-\pi}^{\pi}\int_{-\pi}^{\pi}f_{\omega_1}(\tau_2,\sigma_2)f_{\omega_1,-\omega_1,\omega_2}(\sigma_2,\tau_2,\sigma_2,\tau_2)d\omega_1d\omega_2.\nonumber
\end{align}
\begin{align}
\Sigma_{ii} \rightarrow &\ \frac{1}{2\pi}\int_{-\pi}^{\pi}\left(f_{\omega}^2(\tau_i,\sigma_i) +f_{\omega}(\tau_i,\tau_i)f_{\omega}(\sigma_i,\sigma_i)\right)d\omega \nonumber\\
&+\frac{1}{2\pi}\int_{-\pi}^{\pi} \int_{-\pi}^{\pi} f_{\omega_1,-\omega_1,\omega_2}(\tau_i,\sigma_i,\tau_i,\sigma_i)\omega_1 d\omega_2 \hspace{0.1 in} \rm{for} \hspace{0.1 in} i=1,2.\nonumber\\
\Sigma_{ii} \to &\frac{2}{\pi}\int_{-\pi}^{\pi}\left\vert f_{\omega}(\tau_{i-2},\sigma_{i-2})\right\vert^4d\omega + \frac{2}{\pi}\int_{-\pi}^{\pi}\left\vert f_{\omega}(\tau_{i-2},\sigma_{i-2})\right\vert^2f_{\omega}(\tau_{i-2},\tau_{i-2})f_{\omega}(\sigma_{i-2},\sigma_{i-2})d\omega\nonumber\\
&+\frac{2}{\pi}\int_{-\pi}^{\pi}\int_{-\pi}^{\pi}f_{\omega_1}(\tau_{i-2},\sigma_{i-2})f_{\omega_2}(\tau_{i-2},\sigma_{i-2})f_{\omega_1,-\omega_1,\omega_2}(\sigma_{i-2},\tau_{i-2},\sigma_{i-2},\tau_{i-2})d\omega_1d\omega_2,\hspace{0.1 in} \rm{for} \hspace{0.1 in} i=3,4.\nonumber
\end{align}
\end{lemma}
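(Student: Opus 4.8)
The plan is to identify each entry of $\Sigma$ with the limit, as $T\to\infty$, of $T\operatorname{Cov}\bigl(S_{T,a}(\tau_i,\sigma_i),S_{T,b}(\tau_j,\sigma_j)\bigr)$ for the appropriate $a,b\in\{1,2\}$ and $i,j\in\{1,2\}$; since a second-order cumulant is unchanged by centering, this is precisely the content of the covariance matrix of $\wtilde I_T$ in \eqref{itilde}. First I would write $S_{T,1}$ and $S_{T,2}$ through the periodogram kernels as in \eqref{eq:ST1ST2} and each periodogram through the fDFT \eqref{eq:fdft}, so that every such covariance becomes a finite linear combination of joint cumulants of products of fDFT values $\wtilde{X}^{(T)}_{\pm\omega_k}$ at the Fourier frequencies $\omega_k=2\pi k/T$, carrying a prefactor $c\,T^{-2}$ (with $c\in\{1,2,4\}$) in front of a double sum over $k$ and $l$; after multiplying by $T$ one is left with $c\,T^{-1}$ times such a double sum. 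The expectations subtracted in \eqref{itilde} introduce nothing new, as they are already controlled by Lemma~\ref{Smean}.

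The evaluation of these cumulants rests on two tools. The first is the product theorem for cumulants (Theorem~2.3.2 of \cite{Brillinger_2001}), which expands a cumulant of a product of fDFT factors as a sum, over the indecomposable partitions $\nu=\nu_1\cup\dots\cup\nu_p$ of the associated index table, of products of cumulants of the fDFT factors within each block --- exactly the device used in the proof of Proposition~\ref{prop:cum}. The second is the asymptotic expansion of fDFT cumulants from \cite{Panaretos_Tavakoli_2013_Supp} (see also \cite{Panaretos_Tavakoli_2013}): a $p$-th order fDFT cumulant at Fourier frequencies $\omega_{k_1},\dots,\omega_{k_p}$ equals a discrete Kronecker-delta factor, which vanishes unless $\omega_{k_1}+\dots+\omega_{k_p}\equiv0\pmod{2\pi}$, times the cumulant spectral density $f_{\omega_{k_1},\dots,\omega_{k_{p-1}}}$ of \eqref{eq:cum_spectra}, up to a relative error $O(T^{-1})$ uniform in the frequencies, together with the first- and second-order statements already recalled around Lemma~\ref{Smean}. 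Substituting the second tool into the first turns each partition's contribution into a constrained multiple sum, the number of free summation indices being cut down by the number of delta constraints. Matching this count against the prefactor --- a partition built only from pair blocks keeps the $T^{-1}$ and leaves one free summation index, hence a single-integral term, whereas a partition using one genuine fourth-order fDFT block carries an additional explicit $T^{-1}$ and must therefore leave two free indices, hence a double-integral term --- shows that only finitely many partition types survive in the limit, all others being $o(1)$.

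For the surviving partitions I would then replace the residual Riemann sums $T^{-1}\sum_k g(\omega_k)$ by $(2\pi)^{-1}\int g(\omega)\,d\omega$, using the uniform continuity of $f_\omega$ in $\|\cdot\|_2$ (Proposition~2.1 of \cite{Panaretos_Tavakoli_2013}) and, to absorb the one-step shift $\omega_{k-1}$ that is built into $S_{T,2}$, a modulus-of-continuity estimate of the type \eqref{eq:diff1}, which is available under assumption~\eqref{cond:mixing} with $k=2$. The partitions consisting only of pair blocks furnish the terms that are products of copies of $f_\omega$, while the partition types in which one block is a genuine fourth-order fDFT cumulant linking the two periodogram groups furnish the trispectrum terms $f_{\omega_1,-\omega_1,\omega_2}$, which are well defined and bounded under assumption~\eqref{cond:mixing} with $k=4$. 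After accounting for the factor of two and for the $[0,\pi]$ versus $[-\pi,\pi]$ frequency ranges via $\overline{f_\omega}=f_{-\omega}$ and $\overline{p^{(T)}_\omega}=p^{(T)}_{-\omega}$, and collecting contributions block by block, one recovers the six displayed off-diagonal entries together with the diagonal entries.

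I expect the main obstacle to be purely combinatorial bookkeeping. The $S_{T,2}$--$S_{T,2}$ block involves eight fDFT factors, so one must enumerate the indecomposable partitions of the corresponding index table, determine for each the precise power of $T$ produced by its delta constraints, and --- the delicate point --- separate the genuine order-$T^{-1}$ partitions from the spurious near-diagonal ones generated by the overlap between the indices $k$ and $k-1$ in $S_{T,2}$ (the same phenomenon handled by induction in the proof of Proposition~\ref{prop:cum}). Once the negligible partitions have been discarded, the remaining limits reduce to the elementary integrals displayed in the statement.
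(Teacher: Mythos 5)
Your plan follows essentially the same route as the paper's proof: expand each entry of $\Sigma$ as $T$ times a covariance of sums of periodogram kernels, apply the product theorem for cumulants (Theorem~2.3.2 of \cite{Brillinger_2001}) over indecomposable partitions of the fDFT index table, evaluate each block via the fDFT cumulant asymptotics of \cite{Panaretos_Tavakoli_2013_Supp} with their Kronecker-delta frequency constraints, discard partitions of too low order by counting powers of $T$, and pass from the surviving Riemann sums (including the $\omega_{k-1}$ shift, handled as in Lemma~\ref{Smean}) to the stated integrals. The only thing you defer --- the explicit enumeration of surviving partitions for the $2$-, $6$- and $8$-factor tables --- is precisely the bookkeeping the paper carries out case by case, so your proposal is correct and matches the paper's argument.
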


\begin{proof}
We start with
\begin{align}
\Sigma_{12} = \Sigma_{21} =&T\text{Cov}\left(S_{T,1}(\tau_1,\sigma_1),S_{T,1}(\tau_2,\sigma_2)\right)\nonumber\\ 
= & \frac{1}{T}\text{Cov}\left(\sum_{k=1}^{\lfloor T/2\rfloor}\left(p_{\omega_k}^{(T)}(\tau_1,\sigma_1)+p_{\omega_k}^{(T)}(\sigma_1,\tau_1)\right),\sum_{k=1}^{\lfloor T/2\rfloor}\left(p_{\omega_k}^{(T)}(\tau_2,\sigma_2)+p_{\omega_k}^{(T)}(\sigma_2,\tau_2)\right)\right)\nonumber\\
=&\frac{1}{T}\sum_{k=1}^{\lfloor T/2\rfloor}\sum_{l=1}^{\lfloor T/2\rfloor}\text{Cov}\left(p_{\omega_k}^{(T)}(\tau_1,\sigma_1),p_{\omega_l}^{(T)}(\tau_2,\sigma_2)\right) +\frac{1}{T}\sum_{k=1}^{\lfloor T/2\rfloor}\sum_{l=1}^{\lfloor T/2\rfloor}\text{Cov}\left(p_{\omega_k}^{(T)}(\sigma_1,\tau_1),p_{\omega_l}^{(T)}(\sigma_2,\tau_2)\right)\nonumber\\
&+\frac{1}{T}\sum_{k=1}^{\lfloor T/2\rfloor}\sum_{l=1}^{\lfloor T/2\rfloor}\text{Cov}\left(p_{\omega_k}^{(T)}(\tau_1,\sigma_1),p_{\omega_l}^{(T)}(\sigma_2,\tau_2)\right) +\frac{1}{T}\sum_{k=1}^{\lfloor T/2\rfloor}\sum_{l=1}^{\lfloor T/2\rfloor}\text{Cov}\left(p_{\omega_k}^{(T)}(\sigma_1,\tau_1),p_{\omega_l}^{(T)}(\tau_2,\sigma_2)\right)\nonumber
\end{align}
First we calculate
\begin{align}
\text{Cov}\left(p_{\omega_k}^{(T)}(\tau_1,\sigma_1),p_{\omega_l}^{(T)}(\tau_2,\sigma_2)\right) = &\cm(AB,CD)\nonumber\\
=&(A,B,C,D) + (A,C)(B,D) + (A,D)(B,C)\nonumber
\end{align}
where $A= \wtilde{X}_{\omega_k}^{T}(\tau_1)$, $B= \wtilde{X}_{-\omega_k}^{T}(\sigma_1)$, $C= \wtilde{X}_{\omega_l}^{T}(\tau_2)$ and $D= \wtilde{X}_{-\omega_l}^{T}(\sigma_2)$ and the last equality holds by Theorem 2.3.2 from \cite{Brillinger_2001}  and the fact that $(A)=(B)=(C)=(D)=0$.
\begin{align}
(A,B,C,D) = &2\pi T^{-2}(Tf_{\omega_k,-\omega_k,\omega_l}(\tau_1,\sigma_1,\tau_2,\sigma_2)+O(1)\nonumber\\
= &\frac{2\pi}{T}f_{\omega_k,-\omega_k,\omega_l}(\tau_1,\sigma_1,\tau_2,\sigma_2) +O(T^{-2})\nonumber\\
(A,C)(B,D) = &T^{-2}O(1)O(1)=O(T^{-2})\nonumber\\
(A,D)(B,C) = &T^{-2}(T\delta_{k,l}f_{\omega_k}(\tau_1,\sigma_2)+O(1))(T\delta_{k,l}f_{\omega_l}(\tau_2,\sigma_1) + O(1))\nonumber\\
= &\delta_{k,l}f_{\omega_k}(\tau_1,\sigma_2)f_{\omega_l}(\tau_2,\sigma_1) + \delta_{k,l}O(T^{-1}) + O(T^{-2})\nonumber
\end{align}
Therefore if follows
\begin{align}
\Sigma_{12} = \Sigma_{21} =&\frac{1}{T}\sum_{k=1}^{\lfloor T/2\rfloor}f_{\omega_k}(\tau_1,\sigma_2)f_{\omega_k}(\tau_2,\sigma_1) + \frac{1}{T}\sum_{k=1}^{\lfloor T/2\rfloor}f_{\omega_k}(\sigma_1,\tau_2)f_{\omega_k}(\sigma_2,\tau_1)\nonumber\\
&+\frac{1}{T}\sum_{k=1}^{\lfloor T/2\rfloor}f_{\omega_k}(\tau_1,\tau_2)f_{\omega_k}(\sigma_2,\sigma_1) +\frac{1}{T}\sum_{k=1}^{\lfloor T/2\rfloor}f_{\omega_k}(\sigma_1,\sigma_2)f_{\omega_k}(\tau_2,\tau_1)\nonumber\\
&+ \frac{2\pi}{T^2}\sum_{k=1}^{\lfloor T/2\rfloor}\sum_{l=1}^{\lfloor T/2\rfloor}f_{\omega_k,-\omega_k,\omega_l}(\tau_1,\sigma_1,\tau_2,\sigma_2) + \frac{2\pi}{T^2}\sum_{k=1}^{\lfloor T/2\rfloor}\sum_{l=1}^{\lfloor T/2\rfloor}f_{\omega_k,-\omega_k,\omega_l}(\sigma_1,\tau_1,\sigma_2,\tau_2)\nonumber\\
&+ \frac{2\pi}{T^2}\sum_{k=1}^{\lfloor T/2\rfloor}\sum_{l=1}^{\lfloor T/2\rfloor}f_{\omega_k,-\omega_k,\omega_l}(\tau_1,\sigma_1,\sigma_2, \tau_2) + \frac{2\pi}{T^2}\sum_{k=1}^{\lfloor T/2\rfloor}\sum_{l=1}^{\lfloor T/2\rfloor}f_{\omega_k,-\omega_k,\omega_l}(\sigma_1,\tau_1,\tau_2,\sigma_2)\nonumber\\
\to & \frac{1}{2\pi}\int_{-\pi}^{\pi}\left(f_{\omega}(\tau_1,\sigma_2)f_{\omega}(\tau_2,\sigma_1) +f_{\omega}(\tau_1,\tau_2)f_{\omega}(\sigma_2,\sigma_1)\right)d\omega \nonumber\\
&+\frac{1}{2\pi}\int_{-\pi}^{\pi} \int_{-\pi}^{\pi} f_{\omega_1,-\omega_1,\omega_2}(\tau_1,\sigma_1,\tau_2,\sigma_2)\omega_1 d\omega_2 \hspace{0.1 in} \text{as } T \to \infty.\nonumber
\end{align}
Similarly for $i=1,2$, we have
\begin{align}
\Sigma_{ii} \rightarrow &\ \frac{1}{2\pi}\int_{-\pi}^{\pi}\left(f_{\omega}^2(\tau_i,\sigma_i) +f_{\omega}(\tau_i,\tau_i)f_{\omega}(\sigma_i,\sigma_i)\right)d\omega \nonumber\\
&+\frac{1}{2\pi}\int_{-\pi}^{\pi} \int_{-\pi}^{\pi} f_{\omega_1,-\omega_1,\omega_2}(\tau_i,\sigma_i,\tau_i,\sigma_i)\omega_1 d\omega_2 \hspace{0.1 in} \text{as } T \to \infty.\nonumber
\end{align}
Next consider:

\begin{align}
\Sigma_{34} = \Sigma_{43} = &T\text{Cov}\left(S_{T,2}(\tau_1,\sigma_1),S_{T,2}(\tau_2,\sigma_2)\right)\nonumber\\ 
= & \frac{4}{T}\text{Cov}\left(\sum_{k=1}^{\lfloor T/2\rfloor}\left(p_{\omega_k}^{(T)}(\tau_1,\sigma_1)p_{\omega_{k-1}}^{(T)}(\sigma_1,\tau_1)\right),\sum_{k=1}^{\lfloor T/2\rfloor}\left(p_{\omega_k}^{(T)}(\tau_2,\sigma_2)p_{\omega_{k-1}}^{(T)}(\sigma_2,\tau_2)\right)\right)\nonumber\\
= &\frac{4}{T}\sum_{k=1}^{\lfloor T/2\rfloor}\sum_{l=1}^{\lfloor T/2\rfloor}\text{Cov}\left(p_{\omega_k}^{(T)}(\tau_1,\sigma_1)p_{\omega_{k-1}}^{(T)}(\sigma_1,\tau_1),p_{\omega_l}^{(T)}(\tau_2,\sigma_2)p_{\omega_{l-1}}^{(T)}(\sigma_2,\tau_2)\right)\nonumber
\end{align}
We calculate 
\begin{align}
C_{kl}:= &\text{Cov}\left(p_{\omega_k}^{(T)}(\tau_1,\sigma_2)p_{\omega_{k-1}}^{(T)}(\sigma_1,\tau_1),p_{\omega_l}^{(T)}(\tau_2,\sigma_2)p_{\omega_{l-1}}^{(T)}(\sigma_2,\tau_2)\right)\nonumber\\
= &\cm\left(\tilde{X}_{\omega_k}^{(T)}(\tau_1)\tilde{X}_{-\omega_k}^{(T)}(\sigma_1)\tilde{X}_{\omega_{k-1}}^{(T)}(\sigma_1)\tilde{X}_{-\omega_{k-1}}^{(T)}(\tau_1),\tilde{X}_{\omega_l}^{(T)}(\tau_2)\tilde{X}_{-\omega_l}^{(T)}(\sigma_2)\tilde{X}_{\omega_{l-1}}^{(T)}(\sigma_2)\tilde{X}_{-\omega_{l-1}}^{(T)}(\tau_2)\right)\nonumber\\
= &\cm(ABCD,EFGH)\nonumber
\end{align}
We use Theorem 2.3.2 from \cite{Brillinger_2001} to calculate the cumulant. As argued in the proof of Proposition \ref{prop:cum} we only need to look at the partitions $\nu$ with $p=1,2,3,4$.
\\
\underline{p=1}:
$$(A,B,C,D,E,F,G,H) = \frac{(2\pi)^3}{T^{3}}f_{\omega_k,-\omega_k,\omega_{k-1},-\omega_{k-1},\omega_l,-\omega_l,\omega_{l-1}}(\tau_1,\sigma_1,\sigma_1,\tau_1,\tau_2,\sigma_2,\sigma_2,\tau_2) + O(T^{-4})$$
\underline{p=2}:
\begin{align}
(A,B)(C,D,E,F,G,H) = &\frac{(2\pi)^2}{T^{4}}(Tf_{\omega_k}(\tau_1,\sigma_1) + O(1))(Tf_{\omega_{k-1},-\omega_{k-1},\omega_l,-\omega_l,\omega_{l-1}}(\sigma_1,\tau_1,\tau_2,\sigma_2,\sigma_2,\tau_2)+O(1))\nonumber\\
= &\frac{(2\pi)^2}{T^{2}}f_{\omega_k}(\tau_1,\sigma_1)f_{\omega_{k-1},-\omega_{k-1},\omega_l,-\omega_l,\omega_{l-1}}(\sigma_1,\tau_1,\tau_2,\sigma_2,\sigma_2,\tau_2) + O(T^{-3})\nonumber
\end{align}
Similarly all $2+6$ partitions are of order  $ O(T^{-2})$
$$(A,B,C)(D,E,F,G,H) = \frac{(2\pi)^2}{T^{4}} O(1) O(1) = O(T^{-4}).$$
Similarly all $3+5$ partitions are of order$O(T^{-4})$.
$$(A,B,E,F)(C,D,G,H)= \frac{(2\pi)^2}{T^{4}}(Tf_{\omega_k,-\omega_k,\omega_l}(\tau_1,\sigma_1,\tau_2,\sigma_2) + O(1))(Tf_{\omega_{k-1},-\omega_{k-1},\omega_{l-1}}(\sigma_1,\tau_1,\sigma_2,\tau_2) + O(1))$$
Similarly all $4+4$ partitions are of order $  O(T^{-2})$.
\\
\underline{p=3}:
The partitions with significant contributions are:
\begin{align}
(A,B)(C,F)(D,E,G,H) = &T^{-4}(Tf_{\omega_k}(\tau_1,\sigma_1)+O(1))(T\delta_{k-1,l}f_{\omega_{k-1}}(\sigma_1,\sigma_2)+O(1))\nonumber\\
&\ (T\delta_{k-1,l}(2\pi)f_{-\omega_{k-1},\omega_l,\omega_{l-1}}(\tau_1,\tau_2,\sigma_2,\tau_2)+O(1))\nonumber\\
= &\delta_{k-1,l}O(T^{-1}) + O(T^{-3}).\nonumber\\
(A,B)(C,H)(D,E,F,G)= &\delta_{k,l}O(T^{-1}) + O(T^{-3}).\nonumber\\
(A,B)(D,E)(C,F,G,H)= &\delta_{k-1,l}O(T^{-1}) + O(T^{-3}).\nonumber\\
(A,B)(D,G)(C,E,F,H)= &\delta_{k,l}O(T^{-1}) + O(T^{-3}).\nonumber\\
(A,B)(E,F)(C,D,G,H) = &\frac1{T^{4}}(Tf_{\omega_k}(\tau_1,\sigma_1)+O(1))(Tf_{\omega_{l}}(\tau_2,\sigma_2)+O(1))(2\pi Tf_{\omega_{k-1},-\omega_{k-1},\omega_{l-1}}(\sigma_1,\tau_1,\sigma_2,\tau_2)+O(1))\nonumber\\
= &\frac{2\pi}{T}f_{\omega_k}(\tau_1,\sigma_1)f_{\omega_{l}}(\tau_2,\sigma_2)f_{\omega_{k-1},-\omega_{k-1},\omega_{l-1}}(\sigma_1,\tau_1,\sigma_2,\tau_2) + O(T^{-2})\nonumber\\
(A,B)(G,H)(C,D,E,F) = &\frac{2\pi}{T}f_{\omega_k}(\tau_1,\sigma_1)f_{\omega_{l-1}}(\sigma_2,\tau_2)f_{\omega_{k-1},-\omega_{k-1},\omega_{l}}(\sigma_1,\tau_1,\tau_2,\sigma_2) + O(T^{-2})\nonumber\\
(C,D)(E,F)(A,B,G,H)= &\frac{2\pi}{T}f_{\omega_{k-1}}(\sigma_1,\tau_1)f_{\omega_{l}}(\tau_2,\sigma_2)f_{\omega_{k},-\omega_{k},\omega_{l-1}}(\tau_1,\sigma_1, \sigma_2,\tau_2) + O(T^{-2})\nonumber\\
(C,D)(G,H)(A,B,E,F)= &\frac{2\pi}{T}f_{\omega_{k-1}}(\sigma_1,\tau_1)f_{\omega_{l-1}}(\sigma_2,\tau_2)f_{\omega_{k},-\omega_{k},\omega_{l}}(\tau_1,\sigma_1,\tau_2, \sigma_2) + O(T^{-2}).\nonumber
\end{align}
All other terms are of order $ O(T^{-3})$.\\
\underline{p=4}: The partitions with significant contributions are
\begin{align}
(A,B)(C,F)(D,E)(G,H) = &T^{-4}(Tf_{\omega_k}(\tau_1,\sigma_1)+O(1))(T\delta_{k-1,l}f_{\omega_{k-1}}(\sigma_1,\sigma_2)+O(1))\nonumber\\
& \ (T\delta_{k-1,l}f_{\omega_{l}}(\tau_2,\tau_1)+O(1))(Tf_{\omega_{l-1}}(\sigma_2,\tau_2)+O(1))\nonumber\\
= &\delta_{k-1,l}f_{\omega_k}(\tau_1,\sigma_1)f_{\omega_{k-1}}(\sigma_1,\sigma_2)f_{\omega_{l}}(\tau_2,\tau_1)f_{\omega_{l-1}}(\sigma_2,\tau_2) +O(T^{-2})\nonumber\\
(A,B)(C,H)(D,G)(E,F) = &T^{-4}(Tf_{\omega_k}(\tau_1,\sigma_1)+O(1))(T\delta_{k,l}f_{\omega_{k-1}}(\sigma_1,\tau_2)+O(1))\nonumber\\
& \ (T\delta_{k,l}f_{\omega_{l-1}}(\sigma_2,\tau_1)+O(1))(Tf_{\omega_l}(\tau_2,\sigma_2)+O(1))\nonumber\\
= &\delta_{k-1,l}f_{\omega_k}(\tau_1,\sigma_1)f_{\omega_{k-1}}(\sigma_1,\tau_2)f_{\omega_{l-1}}(\sigma_2,\tau_1)f_{\omega_l}(\tau_2,\sigma_2) + O(T^{-2})\nonumber\\
(A,F)(B,E)(C,D)(G,H) = &\delta_{k,l}f_{\omega_k}(\tau_1,\sigma_2)f_{\omega_{l}}(\tau_2,\sigma_1)f_{\omega_{k-1}}(\sigma_1,\tau_1)f_{\omega_{l-1}}(\sigma_2,\tau_2) + O(T^{-2})\nonumber\\
(A,H)(B,G)(C,D)(E,F) = &\delta_{k,l-1}f_{\omega_k}(\tau_1,\tau_2)f_{\omega_{l-1}}(\sigma_2,\sigma_1)f_{\omega_{k-1}}(\sigma_1,\tau_1)f_{\omega_{l}}(\tau_2,\sigma_2) + O(T^{-2})\nonumber\\
(A,F)(B,E)(C,H)(D,G) = &\delta_{k,l}f_{\omega_k}(\tau_1,\sigma_2)f_{\omega_l}(\tau_2,\sigma_1)f_{\omega_{k-1}}(\sigma_1,\tau_2)f_{\omega_{l-1}}(\sigma_2,\tau_1) + O(T^{-2})\nonumber
\end{align}
Contributions of all the other partitions with $p=4$ are $\leq O(T^{-2})$.
Summing up all these terms we get
\begin{align}
\Sigma_{34} = \Sigma_{43} = & \frac{4}{T}\sum_{k=1}^{\lfloor T/2\rfloor}f_{\omega_k}(\tau_1,\sigma_1)f_{\omega_{k-1}}(\sigma_1,\sigma_2)f_{\omega_{k-1}}(\tau_2,\tau_1)f_{\omega_{k-2}}(\sigma_2,\tau_2)\nonumber\\ 
&+\frac{4}{T}\sum_{k=1}^{\lfloor T/2\rfloor}f_{\omega_k}(\tau_1,\sigma_1)f_{\omega_{k-1}}(\sigma_1,\tau_2)f_{\omega_{k-2}}(\sigma_2,\tau_1)f_{\omega_{k-1}}(\tau_2,\sigma_2)\nonumber\\
&+\frac{4}{T}\sum_{k=1}^{\lfloor T/2\rfloor}f_{\omega_k}(\tau_1,\sigma_2)f_{\omega_{k}}(\tau_2,\sigma_1)f_{\omega_{k-1}}(\sigma_1,\tau_1)f_{\omega_{k-1}}(\sigma_2,\tau_2)\nonumber\\ 
&+\frac{4}{T}\sum_{k=1}^{\lfloor T/2\rfloor}f_{\omega_k}(\tau_1,\tau_2)f_{\omega_{k}}(\sigma_2,\sigma_1)f_{\omega_{k-1}}(\sigma_1,\tau_1)f_{\omega_{k+1}}(\tau_2,\sigma_2)\nonumber\\
&+\frac{4}{T}\sum_{k=1}^{\lfloor T/2\rfloor}f_{\omega_k}(\tau_1,\sigma_2)f_{\omega_{k}}(\tau_2,\sigma_1)f_{\omega_{k-1}}(\sigma_1,\tau_2)f_{\omega_{k-1}}(\sigma_2,\tau_1)\nonumber\\
&+\frac{4(2\pi)}{T^2}\sum_{k=1}^{\lfloor T/2\rfloor}\sum_{l=1}^{\lfloor T/2\rfloor}f_{\omega_k}(\tau_1,\sigma_1)f_{\omega_l}(\tau_2,\sigma_2)f_{\omega_{k-1},-\omega_{k-1},\omega_{l-1}}(\sigma_1,\tau_1,\sigma_2,\tau_2)\nonumber\\
&+\frac{4(2\pi)}{T^2}\sum_{k=1}^{\lfloor T/2\rfloor}\sum_{l=1}^{\lfloor T/2\rfloor}f_{\omega_k}(\tau_1,\sigma_1)f_{\omega_{l-1}}(\sigma_2,\tau_2)f_{\omega_{k-1},-\omega_{k-1},\omega_{l}}(\sigma_1,\tau_1,\tau_2,\sigma_2)\nonumber\\
&+\frac{4(2\pi)}{T^2}\sum_{k=1}^{\lfloor T/2\rfloor}\sum_{l=1}^{\lfloor T/2\rfloor}f_{\omega_{k-1}}(\sigma_1,\tau_1)f_{\omega_l}(\tau_2,\sigma_2)f_{\omega_{k},-\omega_{k},\omega_{l-1}}(\tau_1,\sigma_1,\sigma_2,\tau_2)\nonumber\\
&+\frac{4(2\pi)}{T^2}\sum_{k=1}^{\lfloor T/2\rfloor}\sum_{l=1}^{\lfloor T/2\rfloor}f_{\omega_{k-1}}(\sigma_1,\tau_1)f_{\omega_{l-1}}(\sigma_2,\tau_2)f_{\omega_{k},-\omega_{k},\omega_{l}}(\tau_1,\sigma_1,\tau_2,\sigma_2) + o(1)\nonumber\\
\to &\frac{2}{\pi}\int_{-\pi}^{\pi}f_{\omega}(\tau_1,\sigma_1)f_{\omega}(\sigma_1,\tau_2)f_{\omega}(\sigma_2,\tau_1)f_{\omega}(\tau_2,\sigma_2)d\omega\nonumber\\ 
&+ \frac{2}{\pi}\int_{-\pi}^{\pi}f_{\omega}(\tau_1,\sigma_1)f_{\omega}(\tau_2,\tau_1)f_{\omega}(\sigma_1,\sigma_2)f_{\omega}(\sigma_2,\tau_2)d\omega\nonumber\\
&+ \frac{1}{\pi}\int_{-\pi}^{\pi}f_{\omega}(\tau_1,\sigma_2)f_{\omega}(\tau_2,\sigma_1)f_{\omega}(\sigma_1,\tau_2)f_{\omega}(\sigma_2,\tau_1)d\omega\nonumber\\
&+\frac{2}{\pi}\int_{-\pi}^{\pi}\int_{-\pi}^{\pi}f_{\omega_1}(\tau_1,\sigma_1)f_{\omega_2}(\tau_2,\sigma_2)f_{\omega_1,-\omega_1,\omega_2}(\sigma_1,\tau_1,\sigma_2,\tau_2)d\omega_1d\omega_2\nonumber
\end{align}
as $T \to \infty$. Similarly for $i=3,4$,
\begin{align}
\Sigma_{ii} \to &\frac{2}{\pi}\int_{-\pi}^{\pi}\left\vert f_{\omega}(\tau_{i-2},\sigma_{i-2})\right\vert^4d\omega + \frac{2}{\pi}\int_{-\pi}^{\pi}\left\vert f_{\omega}(\tau_{i-2},\sigma_{i-2})\right\vert^2f_{\omega}(\tau_{i-2},\tau_{i-2})f_{\omega}(\sigma_{i-2},\sigma_{i-2})d\omega\nonumber\\
&+\frac{2}{\pi}\int_{-\pi}^{\pi}\int_{-\pi}^{\pi}f_{\omega_1}(\tau_{i-2},\sigma_{i-2})f_{\omega_2}(\tau_{i-2},\sigma_{i-2})f_{\omega_1,-\omega_1,\omega_2}(\sigma_{i-2},\tau_{i-2},\sigma_{i-2},\tau_{i-2})d\omega_1d\omega_2,\nonumber
\end{align}
as $T \to \infty$. Finally we calculate
\begin{align}
\Sigma_{23} = \Sigma_{32} = &T\text{Cov}\left(S_{T,1}(\tau_2,\sigma_2),S_{T2}(\tau_1,\sigma_1)\right)\nonumber\\ 
= &\frac{2}{T}\sum_{k=1}^{\lfloor T/2 \rfloor}\sum_{l=1}^{\lfloor T/2 \rfloor}\text{Cov}\left(p_{\omega_k}^{(T)}(\tau_1,\sigma_1)p_{\omega_{k-1}}^{(T)}(\sigma_1,\tau_1),p_{\omega_l}^{(T)}(\tau_2,\sigma_2)+p_{\omega_l}^{(T)}(\sigma_2,\tau_2)\right).\nonumber
\end{align}
As earlier we consider each of the terms in the summation separately and
calculate
\begin{align}
\text{Cov}\left(p_{\omega_k}^{(T)}(\tau_1,\sigma_1)p_{\omega_{k-1}}^{(T)}(\sigma_1,\tau_1),p_{\omega_l}^{(T)}(\tau_2,\sigma_2)\right) = &\cm(ABCD,EF)\nonumber
\end{align}
We employ Theorem 2.3.2 from \cite{Brillinger_2001} and only calculate cumulants for partitions with size $p=1,2,3$.
\\
For $p=1$: $(A,B,C,D,E,F) = O(T^{-2}).$\\
For $p=2$: all 3+3 partitions = $O(T^{-3})$ and significant 2+4 partitions are as follows:
\begin{align}
(A,B)(C,D,E,F) = &(2\pi)T^{-3}(Tf_{\omega_k}(\tau_1,\sigma_1)+O(1))(Tf_{\omega_{k-1},-\omega_{k-1},\omega_l}(\sigma_1,\tau_1,\tau_2,\sigma_2)+O(1))\nonumber\\
=&\frac{2\pi}{T}f_{\omega_k}(\tau_1,\sigma_1)f_{\omega_{k-1},-\omega_{k-1},\omega_l}(\sigma_1,\tau_1,\tau_2,\sigma_2) + O(T^{-2})\nonumber\\
(A,F)(B,C,D,E) = &(2\pi)T^{-3}(T\delta_{k,l}f_{\omega_k}(\tau_1,\sigma_2)+O(1))(T\delta_{k,l}f_{-\omega_{k},\omega_{k-1},-\omega_{k-l}}(\sigma_1,\sigma_1,\tau_1,\tau_2)+O(1))\nonumber\\
=&\frac{2\pi}{T}\delta_{k,l}f_{\omega_k}(\tau_1,\sigma_2)f_{-\omega_{k},\omega_{k-1},-\omega_{k-l}}(\sigma_1,\sigma_1,\tau_1,\tau_2) + O(T^{-2})\nonumber\\
(B,E)(A,C,D,F) =  &(2\pi)T^{-3}(T\delta_{k,l}f_{-\omega_k}(\sigma_1,\tau_2)+O(1))(T\delta_{k,l}f_{\omega_{k},\omega_{k-1},-\omega_{k-l}}(\tau_1,\sigma_1,\tau_1,\sigma_2)+O(1))\nonumber\\
=&\frac{2\pi}{T}\delta_{k,l}f_{-\omega_k}(\sigma_1,\tau_2)f_{\omega_{k},\omega_{k-1},-\omega_{k-l}}(\tau_1,\sigma_1,\tau_1,\sigma_2) + O(T^{-2})\nonumber\\
(C,D)(A,B,E,F) = &\frac{2\pi}{T}f_{\omega_{k-1}}(\sigma_1,\tau_1)f_{\omega_{k},-\omega_{k},\omega_l}(\tau_1,\sigma_1,\tau_2,\sigma_2) + O(T^{-2})\nonumber\\
(C,F)(A,B,D,F) = &\frac{2\pi}{T}\delta_{k-1,l}f_{\omega_{k-1}}(\sigma_1,\sigma_2)f_{\omega_{k},-\omega_{k},\omega_{l}}(\tau_1,\sigma_1,\tau_2,\tau_1) + O(T^{-2})\nonumber\\
(D,E)(A,B,C,F) = &\delta_{k-1,l}O(T^{-1})+O(T^{-2}).\nonumber
\end{align} 
Other 2+4 partitions are $O(T^{-3})$.\\
For p=3:
\begin{align}
(A,B)(C,F)(D,E) =&T^{-3}(Tf_{\omega_k}(\tau_1,\sigma_1)+O(1))(T\delta_{k-1,l}f_{\omega_{k-1}}(\sigma_1,\sigma_2)+O(1))(T\delta_{k-1,l}f_{\omega_{l}}(\tau_2,\tau_1)+O(1)) \nonumber\\
= &\delta_{k-1,l}f_{\omega_k}(\tau_1,\sigma_1)f_{\omega_{k-1}}(\sigma_1,\sigma_2) f_{\omega_{l}}(\tau_2,\tau_1)+ \delta_{k-1,l}O(T^{-1}) + O(T^{-2})\nonumber\\
(A,F)(B,E)(C,D) = &\delta_{k,l}f_{\omega_k}(\tau_1,\sigma_2)f_{\omega_{l}}(\tau_2,\sigma_1)f_{\omega_{k-1}}(\sigma_1,\tau_1) + \delta_{k,l}O(T^{-1}) + O(T^{-2})\nonumber
\end{align}
and other partitions are $O(T^{-2})$.
Summing up all the cumulants we obtain 
\begin{align}
\Sigma_{23} = \Sigma_{32} = &\frac{2}{T}\sum_{k=1}^{\lfloor T/2 \rfloor}\left(f_{\omega_k}(\tau_1,\sigma_1)f_{\omega_{k-1}}(\sigma_1,\sigma_2) f_{\omega_{k-1}}(\tau_2,\tau_1)+f_{\omega_k}(\tau_1,\sigma_1)f_{\omega_{k-1}}(\sigma_1,\tau_2) f_{\omega_{k-1}}(\sigma_2,\tau_1)\right)\nonumber\\
&+\frac{2}{T}\sum_{k=1}^{\lfloor T/2 \rfloor}\left(f_{\omega_k}(\tau_1,\sigma_2)f_{\omega_{k}}(\tau_2,\sigma_1)f_{\omega_{k-1}}(\sigma_1,\tau_1)+f_{\omega_k}(\tau_1,\tau_2)f_{\omega_{k}}(\sigma_2,\sigma_1)f_{\omega_{k-1}}(\sigma_1,\tau_1)\right)\nonumber\\
&+ \frac{4\pi}{T^2}\sum_{k=1}^{\lfloor T/2 \rfloor}\sum_{l=1}^{\lfloor T/2 \rfloor}f_{\omega_k}(\tau_1,\sigma_1)f_{\omega_{k-1},-\omega_{k-1}\omega_l}(\sigma_1,\tau_1,\tau_2,\sigma_2)\nonumber\\ 
&+ \frac{4\pi}{T^2}\sum_{k=1}^{\lfloor T/2 \rfloor}\sum_{l=1}^{\lfloor T/2 \rfloor}f_{\omega_k}(\tau_1,\sigma_1)f_{\omega_{k-1},-\omega_{k-1}\omega_l}(\sigma_1,\tau_1,\sigma_2, \tau_2)\nonumber\\
&+ \frac{4\pi}{T^2}\sum_{k=1}^{\lfloor T/2 \rfloor}\sum_{l=1}^{\lfloor T/2 \rfloor}f_{\omega_{k-1}}(\sigma_1, \tau_1)f_{\omega_{k},-\omega_{k}\omega_l}(\tau_1,\sigma_1,\tau_2,\sigma_2)\nonumber\\ 
&+ \frac{4\pi}{T^2}\sum_{k=1}^{\lfloor T/2 \rfloor}\sum_{l=1}^{\lfloor T/2 \rfloor} f_{\omega_{k-1}}(\sigma_1,\tau_1)f_{\omega_{k},-\omega_{k}\omega_l}(\tau_1,\sigma_1,\sigma_2, \tau_2) + o(1)\nonumber\\
\to &\frac{1}{\pi}\int_{-\pi}^{\pi}f_{\omega}(\tau_1,\sigma_1)f_{\omega}(\sigma_1,\sigma_2)f_{\omega}(\tau_2,\tau_1)d\omega +\frac{1}{\pi}\int_{-\pi}^{\pi}f_{\omega}(\tau_1,\sigma_1)f_{\omega}(\sigma_1,\tau_2)f_{\omega}(\sigma_2,\tau_1)d\omega\nonumber\\
&+\frac{1}{\pi}\int_{-\pi}^{\pi}\int_{-\pi}^{\pi}f_{\omega_1}(\tau_1,\sigma_1)f_{\omega_1,-\omega_1,\omega_2}(\sigma_1,\tau_1,\sigma_2,\tau_2)d\omega_1d\omega_2\nonumber \hspace{0.1 in} \text{as } T \to \infty.
\end{align}
The other terms are obtained similarly.
\end{proof}

\begin{prop}\label{ITmean}
Under  assumption~\eqref{cond:mixing} of \autoref{thm2}, for almost every $(\tau,\sigma) \in [0,1]^2$,
$$\E(S_{T2}(\tau,\sigma)-S_{T1}(\tau,\sigma)\overline{S_{T1}(\tau,\sigma)}) = \frac{1}{2\pi}\int_{-\pi}^{\pi}\vert f_{\omega}(\tau,\sigma)\vert^2d\omega - \frac{1}{4\pi^2}\left\vert\int_{-\pi}^{\pi}f_{\omega}(\tau,\sigma)d\omega\right\vert^2 + O(T^{-1}).$$ 
\end{prop}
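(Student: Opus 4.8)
The plan is to evaluate $\E\bigl(S_{T,2}(\tau,\sigma)\bigr)$ and $\E\bigl(S_{T,1}(\tau,\sigma)\overline{S_{T,1}(\tau,\sigma)}\bigr)$ separately, each with an $O(T^{-1})$ remainder, and then subtract. For the first moment of $S_{T,2}$ I would reuse the computation in the proof of Lemma~\ref{Smean}: decomposing $\E\bigl(p^{(T)}_{\omega_k}(\tau,\sigma)\,p^{(T)}_{\omega_{k-1}}(\sigma,\tau)\bigr)$ into a covariance plus a product of expectations, Propositions~2.5 and~2.6 of \cite{Panaretos_Tavakoli_2013} give $\E p^{(T)}_{\omega_k}=f_{\omega_k}+O(T^{-1})$ and that the covariance is $O(T^{-1})$, so that
\[
	\E\bigl(S_{T,2}(\tau,\sigma)\bigr)=\frac2T\sum_{k=1}^{\lfloor T/2\rfloor}f_{\omega_k}(\tau,\sigma)\,f_{\omega_{k-1}}(\sigma,\tau)+O(T^{-1}).
\]
Assumption~\eqref{cond:mixing} with $k=2$ (which amounts to $\sum_t(1+|t|)\|r_t\|_2<\infty$) makes $\omega\mapsto f_\omega(\tau,\sigma)$ Lipschitz for a.e.\ $(\tau,\sigma)$, so both the replacement of $\omega_{k-1}$ by $\omega_k$ and the passage from the left-endpoint sum to the integral cost only $O(T^{-1})$; combined with the symmetries $f_\omega(\sigma,\tau)=\overline{f_\omega(\tau,\sigma)}$ and $\overline{f_\omega}=f_{-\omega}$ this yields $\E\bigl(S_{T,2}(\tau,\sigma)\bigr)=\tfrac1{2\pi}\int_{-\pi}^{\pi}|f_\omega(\tau,\sigma)|^2\,d\omega+O(T^{-1})$. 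The identical argument (it is the second half of the proof of Lemma~\ref{Smean}) gives $\E\bigl(S_{T,1}(\tau,\sigma)\bigr)=\tfrac1{2\pi}\int_{-\pi}^{\pi}f_\omega(\tau,\sigma)\,d\omega+O(T^{-1})$.

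For the quadratic term, since $S_{T,1}(\tau,\sigma)$ is complex valued I would use $\E\bigl(S_{T,1}\overline{S_{T,1}}\bigr)=\bigl|\E S_{T,1}\bigr|^2+\operatorname{Var}(S_{T,1})$, where $\operatorname{Var}(Z)=\E|Z-\E Z|^2$. The variance is $O(T^{-1})$: expanding it as $T^{-2}\sum_{k,l}\operatorname{Cov}\bigl(p_{\omega_k}(\tau,\sigma)+p_{\omega_k}(\sigma,\tau),\,p_{\omega_l}(\tau,\sigma)+p_{\omega_l}(\sigma,\tau)\bigr)$ and using that periodogram covariances are $O(1)$ when $k=l$ and $O(T^{-1})$ when $k\neq l$ (Theorem~2.7 of \cite{Panaretos_Tavakoli_2013}, equivalently the $d=1$ specialisation of Lemma~\ref{Scov}), the diagonal contributes $T^{-2}\cdot O(T)$ and the off-diagonal $T^{-2}\cdot T^{2}\cdot O(T^{-1})$. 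Combining with the first part and expanding $\bigl|\mu+O(T^{-1})\bigr|^2=|\mu|^2+O(T^{-1})$, with $\mu=\tfrac1{2\pi}\int_{-\pi}^{\pi}f_\omega(\tau,\sigma)\,d\omega$ bounded so that the cross term is $O(T^{-1})$, gives
\[
	\E\bigl(S_{T,1}(\tau,\sigma)\overline{S_{T,1}(\tau,\sigma)}\bigr)=\frac1{4\pi^2}\Bigl|\int_{-\pi}^{\pi}f_\omega(\tau,\sigma)\,d\omega\Bigr|^2+O(T^{-1}).
\]
Subtracting this from the expression for $\E\bigl(S_{T,2}(\tau,\sigma)\bigr)$ obtained above yields the claim.

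The only genuinely delicate point is upgrading the $o(T^{-1/2})$ bias estimates recorded in Lemma~\ref{Smean} to the $O(T^{-1})$ rate needed here, and this is exactly where the extra weight $(1+|t_j|)$ in assumption~\eqref{cond:mixing} does its work: it makes the spectral density kernel Lipschitz in $\omega$, which forces both the Riemann-sum error and the one-index frequency shift inside $S_{T,2}$ to be $O(T^{-1})$ rather than merely $o(T^{-1/2})$. The variance bound and the algebraic recombination are then routine bookkeeping.
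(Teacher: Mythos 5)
Your argument is correct and follows essentially the same route as the paper: its proof is exactly the combination of Lemma~\ref{Smean} (the means of $S_{T,1}$ and $S_{T,2}$), Lemma~\ref{Scov} (which yields $\operatorname{Var}(S_{T,1}(\tau,\sigma))=O(T^{-1})$), and the identity $\E\bigl(S_{T,1}\overline{S_{T,1}}\bigr)=\operatorname{Var}(S_{T,1})+\bigl|\E S_{T,1}\bigr|^2$, which is precisely your decomposition. Your only addition is the Lipschitz-in-$\omega$ argument under assumption~\eqref{cond:mixing} to secure the $O(T^{-1})$ bias rate, a sensible patch given that the statement of Lemma~\ref{Smean} records only $o(T^{-1/2})$, although its proof already sketches the $O(T^{-1})$ Riemann-sum bound the Proposition relies on.
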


\begin{proof}
The result follows from Lemma \ref{Smean}, Lemma \ref{Scov} and the fact
$\E\left(S_{T1}(\tau,\sigma)\overline{S_{T1}(\tau,\sigma)}\right) =\text{Var}(S_{T1}(\tau,\sigma)) + \E^2(S_{T,1}(\tau,\sigma))$.
\end{proof}

\begin{prop}
\label{ITcov}
Under assumption (ii) of Theorem \ref{thm2}, for almost every $(\tau_1,\sigma_1,\tau_2,\sigma_2) \in [0,1]^4$,
\begin{align}
\nu^2(\tau_1,\sigma_1,\tau_2,\sigma_2) &=\lim_{T \to \infty}\operatorname{Cov}(I_T(\tau_1,\sigma_1),I_T(\tau_2,\sigma_2))\label{cov_ker_IT} \\
&= \frac{2}{\pi}\int_{-\pi}^{\pi}f_{\omega}(\tau_1,\sigma_1)f_{\omega}(\sigma_1,\tau_2)f_{\omega}(\sigma_2,\tau_1)f_{\omega}(\tau_2,\sigma_2)d\omega\nonumber\\ 
&+ \frac{2}{\pi}\int_{-\pi}^{\pi}f_{\omega}(\tau_1,\sigma_1)f_{\omega}(\tau_2,\tau_1)f_{\omega}(\sigma_1,\sigma_2)f_{\omega}(\sigma_2,\tau_2)d\omega\nonumber\\
&+ \frac{1}{\pi}\int_{-\pi}^{\pi}f_{\omega}(\tau_1,\sigma_2)f_{\omega}(\tau_2,\sigma_1)f_{\omega}(\sigma_1,\tau_2)f_{\omega}(\sigma_2,\tau_1)d\omega\nonumber\\
&+\frac{2}{\pi}\int_{-\pi}^{\pi}\int_{-\pi}^{\pi}f_{\omega_1}(\tau_1,\sigma_1)f_{\omega_2}(\tau_2,\sigma_2)f_{\omega_1,-\omega_1,\omega_2}(\sigma_1,\tau_1,\sigma_2,\tau_2)d\omega_1d\omega_2\nonumber\\
&-\frac{1}{\pi^2}\int_{-\pi}^{\pi}f_{\omega}(\tau_1,\sigma_1)d\omega\int_{-\pi}^{\pi}f_{\omega}(\tau_2,\sigma_2)f_{\omega}(\sigma_2,\sigma_1)f_{\omega}(\tau_1,\tau_2)d\omega\nonumber\\ 
&-\frac{1}{\pi^2}\int_{-\pi}^{\pi}f_{\omega}(\tau_1,\sigma_1)d\omega\int_{-\pi}^{\pi}f_{\omega}(\tau_2,\sigma_2)f_{\omega}(\sigma_2,\tau_1)f_{\omega}(\sigma_1,\tau_2)d\omega\nonumber\\
&-\frac{1}{\pi^2}\int_{-\pi}^{\pi}f_{\omega}(\tau_1,\sigma_1)d\omega\int_{-\pi}^{\pi}\int_{-\pi}^{\pi}f_{\omega_1}(\tau_2,\sigma_2)f_{\omega_1,-\omega_1,\omega_2}(\sigma_2,\tau_2,\sigma_1,\tau_1)d\omega_1d\omega_2\nonumber\\
&-\frac{1}{\pi^2}\int_{-\pi}^{\pi}f_{\omega}(\tau_2,\sigma_2)d\omega\int_{-\pi}^{\pi}f_{\omega}(\tau_1,\sigma_1)f_{\omega}(\sigma_1,\sigma_2)f_{\omega}(\tau_2,\tau_1)d\omega\nonumber\\ 
&-\frac{1}{\pi^2}\int_{-\pi}^{\pi}f_{\omega}(\tau_2,\sigma_2)d\omega\int_{-\pi}^{\pi}f_{\omega}(\tau_1,\sigma_1)f_{\omega}(\sigma_1,\tau_2)f_{\omega}(\sigma_2,\tau_1)d\omega\nonumber\\
&-\frac{1}{\pi^2}\int_{-\pi}^{\pi}f_{\omega}(\tau_2,\sigma_2)d\omega\int_{-\pi}^{\pi}\int_{-\pi}^{\pi}f_{\omega_1}(\tau_1,\sigma_1)f_{\omega_1,-\omega_1,\omega_2}(\sigma_1,\tau_1,\sigma_2,\tau_2)d\omega_1d\omega_2\nonumber\\
&+ \frac{1}{2\pi^3}\int_{-\pi}^{\pi}f_{\omega}(\tau_1,\sigma_1)d\omega\int_{-\pi}^{\pi}f_{\omega}(\tau_2,\sigma_2)d\omega\int_{-\pi}^{\pi}f_{\omega}(\tau_1,\sigma_2)f_{\omega}(\tau_2,\sigma_1)d\omega\nonumber\\ 
&+\frac{1}{2\pi^3}\int_{-\pi}^{\pi}f_{\omega}(\tau_1,\sigma_1)d\omega\int_{-\pi}^{\pi}f_{\omega}(\tau_2,\sigma_2)d\omega\int_{-\pi}^{\pi}f_{\omega}(\tau_1,\tau_2)f_{\omega}(\sigma_2,\sigma_1)d\omega \nonumber\\
&+\frac{1}{2\pi^3}\int_{-\pi}^{\pi}f_{\omega}(\tau_1,\sigma_1)d\omega\int_{-\pi}^{\pi}f_{\omega}(\tau_2,\sigma_2)d\omega\int_{-\pi}^{\pi} \int_{-\pi}^{\pi} f_{\omega_1,-\omega_1,\omega_2}(\tau_1,\sigma_1,\tau_2,\sigma_2)\omega_1 d\omega_2 \nonumber
\end{align}
\end{prop}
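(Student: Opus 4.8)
The plan is to read off $\nu^2$ from the limiting covariance matrix $\Sigma$ of the random vector $\widetilde I_T$ of \eqref{itilde}, which is already evaluated in Lemma~\ref{Scov}, together with the limiting mean of $S_{T,1}$ supplied by Lemma~\ref{Smean}. Since the covariance of two random variables is unchanged when deterministic functions are added to them, and since $I_T(\tau,\sigma)$ coincides, up to an additive deterministic function of $(\tau,\sigma)$, with $\sqrt T\bigl(S_{T,2}(\tau,\sigma)-S_{T,1}(\tau,\sigma)\overline{S_{T,1}(\tau,\sigma)}\bigr)$, we have
\[
\nu^2\bigl((\tau_1,\sigma_1),(\tau_2,\sigma_2)\bigr)
=\lim_{T\to\infty}\operatorname{Cov}\bigl(I_T(\tau_1,\sigma_1),I_T(\tau_2,\sigma_2)\bigr)
=\lim_{T\to\infty}T\,\operatorname{Cov}\bigl(A_T(\tau_1,\sigma_1),A_T(\tau_2,\sigma_2)\bigr),
\]
where $A_T(\tau,\sigma):=S_{T,2}(\tau,\sigma)-S_{T,1}(\tau,\sigma)\overline{S_{T,1}(\tau,\sigma)}$; note that $S_{T,1}(\tau,\sigma)=\tfrac{2}{T}\sum_{k=1}^{\lfloor T/2\rfloor}\operatorname{Re}p_{\omega_k}^{(T)}(\tau,\sigma)$ is real-valued, so that $S_{T,1}\overline{S_{T,1}}=S_{T,1}^2$.

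The crucial step is to linearise the quadratic term $S_{T,1}^2$. For $j=1,2$ write $S_{T,1}(\tau_j,\sigma_j)=\E S_{T,1}(\tau_j,\sigma_j)+T^{-1/2}\widetilde I_{T,j}$, where $\widetilde I_{T,1},\widetilde I_{T,2}$ are the coordinates of $\widetilde I_T$ belonging to $S_{T,1}$ at $(\tau_1,\sigma_1),(\tau_2,\sigma_2)$ and $\widetilde I_{T,3},\widetilde I_{T,4}$ those belonging to $S_{T,2}$. Expanding the square and invoking Proposition~\ref{ITmean} to absorb the $O(T^{-1})$ bias term, one obtains
\[
\sqrt T\,A_T(\tau_j,\sigma_j)
=\widetilde I_{T,2+j}-2\,\bigl(\E S_{T,1}(\tau_j,\sigma_j)\bigr)\widetilde I_{T,j}
-T^{-1/2}\bigl(\widetilde I_{T,j}^2-\E\widetilde I_{T,j}^2\bigr)+O(T^{-1/2}),
\]
the last term being deterministic. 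The quadratic remainder $T^{-1/2}\bigl(\widetilde I_{T,j}^2-\E\widetilde I_{T,j}^2\bigr)$ is centred with $L^2$-norm of order $T^{-1/2}$: because $\E\|X_0\|_2^k<\infty$ for every $k$ and because of the cumulant-order bounds of Proposition~\ref{prop:cum}, all moments of $\widetilde I_{T,j}$ are $O(1)$, hence $\operatorname{Var}(\widetilde I_{T,j}^2)=O(1)$; by the Cauchy--Schwarz inequality this remainder contributes only $o(1)$ to $T\operatorname{Cov}\bigl(A_T(\tau_1,\sigma_1),A_T(\tau_2,\sigma_2)\bigr)$. Consequently, by bilinearity of the covariance and by Lemma~\ref{Smean} (which gives $\E S_{T,1}(\tau_j,\sigma_j)\to c_j:=\tfrac{1}{2\pi}\int_{-\pi}^{\pi}f_\omega(\tau_j,\sigma_j)\,d\omega$, a real number),
\[
\nu^2\bigl((\tau_1,\sigma_1),(\tau_2,\sigma_2)\bigr)
=\lim_{T\to\infty}\Bigl(\Sigma_{34}-2c_1\Sigma_{14}-2c_2\Sigma_{23}+4c_1c_2\Sigma_{12}\Bigr).
\]

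It then remains to substitute the limits of $\Sigma_{12},\Sigma_{14},\Sigma_{23},\Sigma_{34}$ obtained in Lemma~\ref{Scov} and to group the resulting terms: $\lim\Sigma_{34}$ furnishes the three single-frequency quartic integrals and the one double-frequency integral among the first four terms of \eqref{cov_ker_IT}; $-2c_1\lim\Sigma_{14}$ and $-2c_2\lim\Sigma_{23}$ furnish the two blocks of three terms each carrying, respectively, the factor $\int_{-\pi}^{\pi}f_\omega(\tau_1,\sigma_1)\,d\omega$ or $\int_{-\pi}^{\pi}f_\omega(\tau_2,\sigma_2)\,d\omega$ (with combined constant $-2\cdot\tfrac{1}{2\pi}\cdot\tfrac{1}{\pi}=-\tfrac{1}{\pi^2}$); and $4c_1c_2\lim\Sigma_{12}$ furnishes the final three terms carrying both such factors (with combined constant $4\cdot\tfrac{1}{2\pi}\cdot\tfrac{1}{2\pi}\cdot\tfrac{1}{2\pi}=\tfrac{1}{2\pi^3}$), which reproduces the thirteen terms in \eqref{cov_ker_IT}. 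The substantive difficulty of the whole argument sits in Lemma~\ref{Scov} --- the fourth-, sixth- and eighth-order cumulant expansions of products of periodogram kernels, organised through the product theorem for cumulants; given those, the only point requiring genuine care here is the $L^2$-control of the linearisation remainder, which is precisely where the assumption of finite moments of all orders for $X_0$ is used, and the rest of the proof is a mechanical matching of terms.
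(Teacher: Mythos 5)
Your proposal is correct and follows essentially the same route as the paper: the paper also linearises $g(x_1,x_2,x_3,x_4)=(x_3-x_1^2,x_4-x_2^2)$ around the limiting means from Lemma~\ref{Smean} (phrased there as an application of the delta method to the vector \eqref{itilde}), arrives at exactly your combination $\Sigma_{34}-2c_1\Sigma_{14}-2c_2\Sigma_{23}+4c_1c_2\Sigma_{12}$ with $c_j=\frac{1}{2\pi}\int_{-\pi}^{\pi}f_\omega(\tau_j,\sigma_j)\,d\omega$, and then substitutes the limits from Lemma~\ref{Scov}. Your explicit $L^2$-control of the quadratic remainder via the moment bounds of Proposition~\ref{prop:cum} is, if anything, a slightly more careful justification that the \emph{covariances} themselves converge (rather than only the limiting law), and your reading of $I_T$ up to the deterministic shift matches the normalisation actually used in \eqref{cov_ker_IT}.
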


\begin{proof}
We have proved in Section \ref{subsec:fdds} the vector
$$\sqrt{T}\left(\begin{array}{c}
S_{T,1}(\tau_1,\sigma_1) - \E(S_{T,1}(\tau_1,\sigma_1))\\
S_{T,1}(\tau_2,\sigma_2) - \E(S_{T,1}(\tau_2,\sigma_2))\\
S_{T,2}(\tau_1,\sigma_1) - \E(S_{T,2}(\tau_1,\sigma_1))\\
S_{T,2}(\tau_2,\sigma_2) - \E(S_{T,2}(\tau_2,\sigma_2))
\end{array}\right)$$
converges in distribution to a normal distribution. To obtain the covariance kernel of $I_{T}(\tau,\sigma)$, we use delta-method on this vector with $g(x_1,x_2,x_3,x_4) := (x_3-x_1^2,x_4-x_2^2).$ Using Lemma \ref{Smean} 
it follows that
\begin{eqnarray*}
&&\sqrt{T}\left(\begin{array}{c}
S_{T2}(\tau_1,\sigma_1)-S_{T1}(\tau_1,\sigma_1)\overline{S_{T1}(\tau_1,\sigma_1)} - \frac{1}{2\pi}\int_{-\pi}^{\pi}\vert f_{\omega}(\tau_1,\sigma_1)\vert^2d\omega + \frac{1}{4\pi^2}\left\vert\int_{-\pi}^{\pi}f_{\omega}(\tau_1,\sigma_1)d\omega\right\vert^2\\
S_{T2}(\tau_2,\sigma_2)-S_{T1}(\tau_2,\sigma_2)\overline{S_{T1}(\tau_2,\sigma_2)} - \frac{1}{2\pi}\int_{-\pi}^{\pi}\vert f_{\omega}(\tau_2,\sigma_2)\vert^2d\omega + \frac{1}{4\pi^2}\left\vert\int_{-\pi}^{\pi}f_{\omega}(\tau_2,\sigma_2)d\omega\right\vert^2
\end{array}
\right) \\
&& ~~~~~~~~~~~~~~~~~~~~~~~~~~~~~~~~~~~~~~~~~~~~~~~~~~~~~\stackrel{d}{\to} N(0,\wtilde{\Sigma}).
\end{eqnarray*}
where 
\begin{align}
\wtilde{\Sigma}_{ij} =&\Sigma_{(i+2)(j+2)} -2\frac{1}{2\pi}\int_{-\pi}^{\pi}f_{\omega}(\tau_i,\sigma_i)d\omega\Sigma_{i(j+2)} -2\frac{1}{2\pi}\int_{-\pi}^{\pi}f_{\omega}(\tau_j,\sigma_j)d\omega\Sigma_{(i+2)j} \nonumber\\ 
&+ 4\frac{1}{4\pi^2}\int_{-\pi}^{\pi}f_{\omega}(\tau_i,\sigma_i)d\omega\int_{-\pi}^{\pi}f_{\omega}(\tau_j,\sigma_j)d\omega\Sigma_{ij}\nonumber
\end{align}
 for $i=1,2$; $j=1,2$ and $\Sigma_{ij}$ are as in Lemma \ref{Scov}. 
Substituting the values of $\Sigma_{ij}$ we obtain \eqref{cov_ker_IT}.
\end{proof}

\subsection{The order of $C_0(\nu)$ in Proposition \ref{prop:cum}}
\label{subsec:prop_cm}
Using the notations of Proposition \ref{prop:cum} we write 
\begin{equation}
\label{cum_v}
C_0(\nu) = \frac{2^l}{T^{l/2}}\sum_{k_1=1}^{\lfloor T/2\rfloor} \dots\sum_{k_l=1}^{\lfloor T/2\rfloor} \cm\left(Z_{ij} : ij \in \nu_1\right) \dots \cm\left(Z_{ij} : ij \in \nu_p\right),
\end{equation}
where
$Z_{i1}:= \wtilde{X}_{\omega_i}^{(T)}(\tau), \ 
Z_{i2}:=  \wtilde{X}_{-\omega_i}^{(T)}(\sigma), \ 
Z_{i3}:=  \wtilde{X}_{\omega_{i-1}}^{(T)}(\sigma) \ \text{and} \
Z_{i4}:=  \wtilde{X}_{-\omega_{i-1}}^{(T)}(\tau)$
and  $\nu = \nu_1 \cup \nu_2 \cup \dots \cup \nu_p$ is any indecomposable partition of the table
\begin{equation}
\label{table_index}
\begin{array}{cccc}
(k_1,1) & (k_1,2) & (k_1,3) & (k_1,4) \\
(k_2,1) & (k_2,2) & (k_2,3) & (k_2,4) \\
\vdots &\vdots &\vdots &\vdots \\
(k_l,1) & (k_l,2) & (k_l,3) & (k_l,4) 
\end{array}.
\end{equation}

To calculate these cumulants we will use Theorem B.2 from \cite{Panaretos_Tavakoli_2013_Supp}, which says
\begin{equation}
\label{Thm_B2}
\cm\left( \wtilde{X}_{\omega_1}^{(T)}(\tau_1), \dots , \wtilde{X}_{\omega_k}^{(T)}(\tau_k)\right) = \frac{(2\pi)^{k/2 - 1}}{T^{k/2}} \Delta^{(T)}(\omega_1 + \dots, \omega_k)f_{\omega_1, \dots, \omega_{k-1}}(\tau_1,\dots,\tau_k) + \epsilon_T.
\end{equation}
In the above equation $f_{\omega_1, \dots, \omega_{k-1}} = O(1)$, $\epsilon_T = O(1)$ uniformly over $\omega$ and the equality is interpreted in $L^2([0,1]^k)$.
\\
For $\omega = 2\pi k /T,\  k \in \mathbb{Z}$, the function $\Delta^{(T)}(\omega)=T$ if $k = 0 \ (mod\  T)$ and $0$ otherwise. 
\\
Note that if for some $\nu$, we have $p > 2l$ then there is at least one $\vert \nu_m \vert = 1$ for some $1 \leq m \leq p$. In that case $\cm(Z_{ij} :ij \in \nu_m) = 0$ and therefore $C(\nu)=0$.
\\
Therefore let us look at the indecomposable partitions $\nu$ with $p \leq 2l$, such that each $\nu_i$ has at least 2 elements.
\\
Let $\mathcal{P}$ be the set of all partitions of set $\{k_1,k_2,\dots,k_l\}$ and define a function $s : \mathbb{N}^l \times \mathcal{P} \mapsto \{0,1\}^p$, such that $[s(\{k_1,\dots,k_l\},\nu)]_i := 0$ if $\sum \omega_k = 0$ in $\nu_i$ and it is $1$ otherwise. For a fixed partition  $\nu = \nu_1 \cup \dots \cup \nu_p$, using \eqref{Thm_B2},the sum in \eqref{cum_v}  can be written as
\begin{eqnarray}
C(\nu) = \frac{2^l}{T^{l/2}}\sum_{s(\nu) \in \{0,1\}^p}\sum_{\substack{\{k_1,\dots,k_l\}:\\ s(\{k_1,\dots,k_l\},\nu)=s(\nu)}}T^{-2l} T^{p-\| s(\nu) \|} O(1).
\end{eqnarray}
For every possible value of $s(\nu)$, we will find $r$, possible order for the set $\{k_1,\dots,k_l\}$ and an upper bound for $r + p- \| s(\nu) \|$. Note that for some values of $s(\nu)$ there are no feasible solutions for $\{k_1,\dots,k_l\}$ and hence the contribution of such $s(\nu)$ in the sum will be $0$. So we focus on consistent values of $s(\nu)$.

To this end let $s(\nu_j) = [s(\nu)]_j$ and partition table \eqref{table_index} in blocks in the following way. First we look at the rows for which there is no $\nu_j$ such that
$\nu_j \cap \{k_i,-k_i,k_i-1,-(k_i-1)\} \neq \emptyset$ and $s(\nu_j) = 0$. In other words if any set $\nu_j$ in the partition has an elements from $i$-th row then $s(\nu_j) =1$. Each of these rows are one of the blocks, call them $B_{11}, \dots, B_{1r_1}$. On the rest of the rows define the following equivalence relationship. We say $i \sim j$ if there is a chain of sets $\nu_{m_1}, \dots, \nu_{m_t}$ connecting $i$-th and $j$-th row, such that $s(\nu_{m_k}) =0$ for all $k$. It is easy to see it is in fact an equivalence relation. Therefore consider all the equivalence classes and that will give us a partition of the rows of the table. Each of these partitions are considered as separate blocks. Note that by construction each row in one block has a linear relationship with all the other rows in the same block.
Reorder and label the blocks as $B_{21}, \dots , B_{2r_2}, \dots, B_{l1}, \dots, B_{lr_l}$ such that $B_{ij}$ has $(i-1)$ independent solutions for the rows of $B_{ij}$, in the sense that, if we fix any $i-1$ rows of the block the rest will be fixed. Also by construction, if a set $\nu_i$ has an element from both $B_{i_1j_1}$ and $B_{i_2j_2}$, with $(i_1,j_1) \neq (i_2,j_2)$, then $s(\nu_i) =1$.

\begin{claim}
\label{max_set}
For all blocks $B_{ij}$ with $i>2$, there exists sets $\nu_{m_1}, \dots, \nu_{m_t}$ with the property that $s(\nu_{m_k}) =0$ and $\vert \nu_{m_k} \vert >2$ for $k=1, \dots t$; $\cup \nu_{m_j} \subset B_{ij}$ and $\vert \cup \nu_{m_k} \vert \geq i + 2(t -1)$.
\end{claim}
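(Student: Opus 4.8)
The plan is to prove Claim~\ref{max_set} by a double counting argument comparing two ways of measuring how tightly the $s=0$ blocks bind the rows of $B_{ij}$: a \emph{connectivity} count — how many ``merges'' those blocks must perform to turn the $n$ rows of $B_{ij}$ into one component — which equals $n-1$, and a \emph{rank} count — how many of them can carry linearly independent congruences — which is at most $n-(i-1)$ because the solution set has dimension $i-1$. The difference forces a surplus of at least $i-2$ merges to be produced by ``large'' blocks, i.e.\ blocks of cardinality $\ge3$, and that surplus is exactly what $\vert\bigcup_k\nu_{m_k}\vert\ge i+2(t-1)$ records.

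In detail, fix $B=B_{ij}$ with $i>2$, let $R$ be its rows, $n=\vert R\vert$, and let $\mathcal Z$ be the set of $s=0$ blocks all of whose positions lie in rows of $R$; by the construction of the components these are exactly the blocks coupling the rows of $R$, and none of them touches a row of another component. Let $c_{m,r}\in\Z$ denote the coefficient of $k_r$ produced by the positions of $\nu_m$ in row $r$, so that $s(\nu_m)=0$ reads $\sum_r c_{m,r}k_r\equiv\gamma_m\pmod T$; call $\nu_m$ \emph{effective} if some $c_{m,r}\ne0$ and \emph{inert} otherwise, so ``$B_{ij}$ has $i-1$ independent solutions'' means the effective congruences have rank $n-(i-1)$. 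Next I would run a greedy spanning procedure on $R$: starting from the partition of $R$ into singletons, scan $\mathcal Z$ and retain $\nu_m$ precisely when its position-support $p_m$ meets at least two current cells, merging those cells. Since $R$ is connected this terminates at the one-cell partition, and the retained family $\mathcal S$ satisfies $\sum_{\nu_m\in\mathcal S}(\mu_m-1)=n-1$, where $\mu_m\ge2$ is the number of cells $\nu_m$ merges and $\mu_m\le\vert p_m\vert\le\vert\nu_m\vert$.

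The two structural inputs I would then establish are: (a) the effective members of $\mathcal S$ carry linearly independent congruences, since the greedy rule never closes a ``cycle'' of the underlying hypergraph, so their coefficient rows form a hyperforest and are of full rank; with the rank bound this gives $a+e\le n-i+1$, where $a$ and $e$ count the retained effective blocks of size $2$ and of size $\ge3$ respectively; and (b) an inert retained block has all coefficients zero, so each of the $\ge2$ rows it touches contains at least two of its positions (a single position in a row gives coefficient $\pm1$), whence $\vert\nu_m\vert\ge2\vert p_m\vert\ge2\mu_m\ge4$. A retained block of size $2$ is necessarily effective with $\mu_m=2$, so $\sum_{\nu_m\in\mathcal S}(\mu_m-1)=n-1$ becomes $a+\sum_{\text{eff},\ge3}(\mu_m-1)+\sum_{\text{inert}}(\mu_m-1)=n-1$; substituting $a\le n-i+1-e$ and cancelling the $e$ terms gives the surplus inequality $\sum_{\text{eff},\ge3}(\mu_m-2)+\sum_{\text{inert}}(\mu_m-1)\ge i-2$.

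Finally I would take $\nu_{m_1},\dots,\nu_{m_t}$ to be the retained effective blocks of size $\ge3$ together with the retained inert blocks: each has $\vert\nu_{m_k}\vert>2$, satisfies $s(\nu_{m_k})=0$, and lies in rows of $B_{ij}$ by the choice of $\mathcal Z$. Using $\vert\nu_m\vert-2\ge\mu_m-2$ for the effective ones and $\vert\nu_m\vert-2\ge2(\mu_m-1)\ge\mu_m-1$ for the inert ones, the surplus inequality yields $\sum_k(\vert\nu_{m_k}\vert-2)\ge i-2>0$, so $t\ge1$; and because the $\nu_{m_k}$ are disjoint blocks of the partition $\nu$, $\vert\bigcup_k\nu_{m_k}\vert=\sum_k\vert\nu_{m_k}\vert\ge i-2+2t=i+2(t-1)$, which is the claim. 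The step I expect to be the real obstacle is (a): showing that an acyclic family of these integer-signed hyperedge congruences is genuinely of full rank modulo $T$ (not merely for generic coefficients), and, intertwined with it, the case analysis of how positions can sit inside a single block that is needed to justify (b) and to exclude the various degenerate configurations; once those are in place the rest is arithmetic with the two counting identities.
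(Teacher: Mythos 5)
Your route is genuinely different from the paper's, and most of your bookkeeping is sound (the merge identity $\sum_{\nu_m\in\mathcal S}(\mu_m-1)=n-1$, the treatment of inert sets in (b), the disjointness step at the end). The genuine gap is step (a), and it is not merely the technical obstacle you flag: as stated it is false. Retention by your greedy procedure is a property of the position-support $p_m$, whereas effectiveness and linear independence concern the net coefficient vector $(c_{m,r})_r$; a set can join two current cells while its net coefficient vanishes on every row of one of them, because two positions in the same row cancel (columns $1,2$, or $3,4$, contribute $+1$ and $-1$ to $c_{m,r}$). Concretely, take a block of four rows whose $s=0$ sets are $\nu_1=\{(k_1,1),(k_2,2)\}$, $\nu_2=\{(k_3,1),(k_4,2)\}$ and $\nu_3=\{(k_1,3),(k_2,4),(k_3,3),(k_3,4)\}$; the remaining positions can be paired off into $s=1$ sets so that the full partition is indecomposable and this $s$-pattern is feasible. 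The constraints read $k_1\equiv k_2$, $k_3\equiv k_4$ and (from $\nu_3$) again $k_1\equiv k_2$, so $n=4$ and $i=3>2$. Scanning in the order $\nu_1,\nu_2,\nu_3$, all three sets are retained ($\nu_3$ merges the cells $\{1,2\}$ and $\{3,4\}$ through row $3$, on which its net coefficient is $0$), yet its coefficient vector $(1,-1,0,0)$ coincides with that of $\nu_1$: the retained effective congruences are dependent, $a+e=3>n-i+1=2$, and your surplus inequality fails, since its left-hand side is $\mu_3-2=0<i-2=1$. The conclusion of Claim~\ref{max_set} survives in this example only because $|\nu_3|=4>\mu_3=2$, i.e.\ precisely the slack your accounting discards; and since you neither fix a scan order nor prove that a favourable order exists, the chain (a) $\Rightarrow$ rank bound $\Rightarrow$ surplus inequality breaks down.

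For comparison, the paper's argument never needs independence of congruences. It first shows that some $s=0$ set in the block has cardinality larger than $2$ (if all had cardinality $2$, every row of the block would be tied to every other, leaving at most one free row and contradicting $i-1\ge2$), then grows the family from a set $\nu_{m_1}$ of maximal cardinality, choosing at each step an $s=0$ set that meets the rows already collected and at least two new rows, and bounds the degrees of freedom directly: $\nu_{m_1}$ accounts for at most $|\nu_{m_1}|-1$ free rows and each later set for at most $|\nu_{m_k}|-2$, whence $i-1\le\sum_{k=1}^t|\nu_{m_k}|-1-2(t-1)$, which is exactly the claimed bound. If you want to keep the double-counting scheme, (a) has to be replaced by an argument that quantifies the cancellation: a retained set whose congruence adds no rank must contain cancelling pairs of positions inside rows of the cells it attaches, and that extra cardinality must be fed back into the surplus count --- which is, in effect, the bookkeeping the paper's construction performs implicitly.
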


\begin{proof}
By construction we can always find $\nu_{m_1}, \dots, \nu_{m_t}$ such that $s(\nu_{m_k}) =0$ and $\cup \nu_{m_j} \subset B_{ij}$. First suppose $\vert \nu_{m_k} \vert =2$ for $k=1, \dots t$. Note that by construction all the rows in the block must have one element in the $\nu_{m_k}$'s. Therefore all the rows are linearly related and if we fix one row, the rest of the rows is also  fixed. This is a contradiction to the property that $B_{ij}$ has $(i-1)$ independent solutions for $\{k_1,\dots,k_{l}\}$. Therefore there must be at least one set with cardinality $>2$ among the $\nu_{m_k}$'s. 

Now look at the set, $\nu_{m_1}$ with maximal cardinality. If $\vert \nu_{m_1} \vert \geq i$, then the claim holds with $t=1$. If not,consider the rows that do not occur in $\nu_{m_1}$. By construction, all these rows must hook with all the rows appearing in $\nu_{m_1}$ with sets $\nu_k$ such that $s(\nu_k) =0$. Find $\nu_{m_2}$ such that $s(\nu_{m_2})=0$ and $\nu_{m_2}$ has at least one elements from rows appearing in $\nu_{m_1}$ and at least two rows from the rows that do not appear in $\nu_{m_1}$. There must be one such set, because if not once the rows appearing in $\nu_{m_1}$ are fixed, the  rest of the rows in the block is also fixed and in this situation number of independent rows from $B_{ij}$ is $\leq \vert \nu_{m_1} \vert -1 < i-1$. Continue in this way to find $\nu_{m_1}, \dots \nu_{m_t}$. Each of these sets adds at most $\vert\nu_{m_k}\vert -2$ independent variables, therefore number of independent rows in the block $$i-1 \leq \vert \nu_{m_1} \vert -1 + \sum_{k=2}^t (\vert \nu_{m_k} \vert - 2) = \sum_{k=1}^t \vert \nu_{m_k} \vert - 1 - 2(t-1).$$ Consequently, we have 
$i + 2(t -1) \leq  \sum_{k=1}^t \vert \nu_{m_k} \vert =  \vert \cup \nu_{m_k} \vert.$
\end{proof}

Note that each of the first $r_1$ rows can be chosen independently. Therefore from the construction $r = r_1 + r_2 + 2r_3 + \dots (m-1)r_m \leq l$.

Define the set $I := \{i \in \{1,\dots,p\}: s(\nu_i)=0\}$. Note that $\vert I \vert = p - \|s(\nu)\|$. First we find $t_{ij}$ sets from $B_{ij}$ obtained by Claim \ref{max_set}. Let $n$ be the number of elements left in the table, then $n \leq 4l- \sum_{i=3}^m ir_i -2\sum_{i=3}^m \sum_{j=1}^{r_i} (t_{ij}-1)$.

Next we find a lower bound on number of elements in the set $\cup_{k\in I^c} \nu_k$. Because all the sets of the partition must have at least two elements, we have 
\begin{align}
\vert I \vert \leq &\sum_{i=3}^m \sum_{j=1}^{r_i}t_{ij} + \frac{n-\vert \cup_{k\in I^c} \nu_k \vert}{2}
\leq \sum_{i=3}^m r_i + 2l - \frac{\sum_{i=3}^m ir_i + \vert \cup_{k\in I^c} \nu_k \vert}{2}
\end{align}
To this end we consider the following cases separately:
\\
\underline{Case I:} There are more than one blocks, i.e., $\sum r_i > 1$.
\begin{claim}
Each of the blocks must have at least 2 elements from the set $\cup_{i\in I^c} \nu_i$. 
\end{claim}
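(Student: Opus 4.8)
The plan is to argue by contradiction, combining the indecomposability of the partition $\nu$ with the elementary fact that every row of table~\eqref{table_index} carries total frequency zero. Fix a block $B$ and write $S_B$ for the collection of all $4|B|$ entries lying in the rows of $B$. Two structural facts coming from the construction of the blocks will be used repeatedly: (i) if $\nu_j$ satisfies $s(\nu_j)=0$ and meets a row of $B$, then $\nu_j\subseteq S_B$ — such a set connects every row it touches, so by the definition of the equivalence classes those rows all lie in one block, and the singleton blocks $B_{11},\dots,B_{1r_1}$ are met by no $s=0$ set; and (ii) since $\sum_i r_i>1$, the rows of $B$ form a proper nonempty subset of the rows of the table.

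First I would record the frequency identity. The four entries of the row indexed by $k_i$ carry the frequencies $\omega_{k_i}$, $-\omega_{k_i}$, $\omega_{k_i-1}$, $-\omega_{k_i-1}$, which sum to zero; hence $\sum_{e\in S_B}\operatorname{freq}(e)=0$. Grouping these entries by the set $\nu_j$ containing them, using (i) and the fact that every $\nu_j$ with $s(\nu_j)=0$ has frequency sum zero, this collapses to $\sum_{j\in I^c}\operatorname{freq}(\nu_j\cap S_B)=0$.

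I would then suppose, for contradiction, that $B$ contains at most one element of $\bigcup_{i\in I^c}\nu_i$, and split into two cases. If it contains none, then every $\nu_j$ meeting a row of $B$ has $s(\nu_j)=0$ and hence, by (i), lies inside $S_B$; so the restriction of $\nu$ to the rows of $B$ is already a partition of the entries in those rows, which by (ii) contradicts the indecomposability of $\nu$. If it contains exactly one element $e_0$, lying in $\nu_{j_0}$ with $j_0\in I^c$, then $\nu_j\cap S_B=\emptyset$ for all $j\in I^c$ with $j\neq j_0$, so the identity of the previous paragraph reduces to $\operatorname{freq}(e_0)=0$; but $\operatorname{freq}(e_0)\in\{\pm\omega_k,\pm\omega_{k-1}\}$ for some $2\le k\le\lfloor T/2\rfloor$, and all of these are nonzero modulo $2\pi$ (this is one reason the sum defining $S_{T,2}$ in~\eqref{eq:ST1ST2} starts at $k=2$), a contradiction. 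Thus every block meets $\bigcup_{i\in I^c}\nu_i$ in at least two entries, and since the blocks partition the rows this gives the lower bound $\bigl|\bigcup_{i\in I^c}\nu_i\bigr|\ge 2\sum_i r_i$ that is then fed into the displayed estimate for $|I|$.

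I expect the main obstacle to be the bookkeeping of paragraph two rather than any single hard idea: one must be careful to invoke the implication "$\nu_j$ meets $B\Rightarrow\nu_j\subseteq S_B$" only for sets with $s(\nu_j)=0$, and one must check that coincidences among the summation indices $k_1,\dots,k_l$ cannot secretly make a frequency vanish — the argument uses only that each \emph{individual} frequency $\omega_k$ and $\omega_{k-1}$ appearing in the table is nonzero, which holds throughout the range of summation. A minor point worth stating explicitly is that the singleton blocks $B_{1j}$ satisfy the claim trivially, since all four of their entries lie in sets with $s=1$; so the real content concerns the non-singleton blocks, where the two-case argument above applies verbatim.
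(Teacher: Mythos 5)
Your proof is correct and follows essentially the same route as the paper's: indecomposability (the blocks must all communicate) supplies the first element of $\cup_{i\in I^c}\nu_i$ in each block, and the vanishing of the total frequency sum over a block's rows, together with the fact that each individual frequency $\pm\omega_{k}$, $\pm\omega_{k-1}$ is nonzero, rules out there being exactly one such element. Your version merely makes explicit what the paper leaves implicit (the collapsed frequency identity, the use of fact (i) only for $s=0$ sets, and the triviality of the singleton blocks), but the underlying argument is the same.
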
 
\begin{proof}
All the blocks must communicate, therefore there must be at least one element in each block which belongs to some $\nu_j$ that connects two different blocks and hence $s(\nu_j) = 1$. If there are more than one such elements we are done. 
If not, then there must be at least one other set $\nu_j$ consisting of only elements in that block with $s(\nu_j)=1$, owing to the fact that sum of all elements in one block is 0 and it must be nonzero if we take just one element out from the block.
\end{proof}

Therefore in this case each of the blocks must have at least 2 elements in the set $\cup_{i\in I^c} \nu_i$ and hence $\vert \cup_{i\in I^c} \nu_i \vert \geq 2(r_1+r_2+\dots+r_m)$ and $$\vert I \vert \leq 2l - r_1 - r_2 - \dots - \frac{mr_m}{2}.$$

Consequently, it follows that
\begin{align}
r +p-\| s(\nu) \| =  r + \vert I \vert &\leq 2l+ r_3/2 + \dots + (m/2-1)r_m\nonumber\\
&< 2l + r_1 + r_2 + \dots [(m-1)/2]r_m \leq 2l+l/2.\nonumber
\end{align}
The second inequality is strict because of the fact that at least one of the $r_i$'s is positive.
Therefore in this case $C(\nu) =O(T^{\delta})$ for some $\delta < 0$.
\\
\underline{Case II:} There is only one block, i.e., $r_k =1$ for some $2 \leq k \leq m$ and $r_j =0$ for all $j \neq k$.\\
Case II.1: $l=2$: In this situation $k = 2$, as $k \leq l$. Therefore substituting $r=1$ and $p \leq 2l$, we obtain
$$C(\nu) \leq T^{-l/2} O(T) T^{-2l} T^{2l} O(1) = O(T^{1-l/2}) = O(1).$$ 
Case II.2: $l >2$: Here $k \leq l$ and $r=k-1$. By Claim \ref{max_set}, the  total number of sets in the partition $p \leq t + (4l-k-2(t-1))/2 = 2l -k/2 +1/2$. Finally, as $\| s(\nu) \| \geq 0$, we get
$$C(\nu) \leq T^{-l/2} O(T^{k-1}) T^{-2l} T^{2l-k/2+1/2} O(1) = O(T^{k/2-l/2-1/2}) \leq O(T^{-1/2}).$$
This implies that the 2nd order cumulant is finite and cumulants of order $>2$ converges to 0 as $T \to \infty$.  

\end{document}